\providecommand{\AMSclass}[1]{\textbf{\textit{AMS classification.}} #1}
\definecolor{vert}{rgb}{0.,0.5,0.}
\title{Chromatic polynomials and bialgebras of graphs}
\date{}
\author{Lo\"\i c Foissy}
\affil{\small{Univ. Littoral Côte d'Opale, UR 2597
LMPA, Laboratoire de Mathématiques Pures et Appliquées Joseph Liouville
F-62100 Calais, France}.\\ Email: \texttt{foissy@univ-littoral.fr}}
\theoremstyle{plain}
\newtheorem{theo}{Theorem}[section]
\newtheorem{lemma}[theo]{Lemma}
\newtheorem{cor}[theo]{Corollary}
\newtheorem{prop}[theo]{Proposition}
\newtheorem{defi}[theo]{Definition}
\theoremstyle{remark}
\newtheorem{remark}{Remark}[section]
\newtheorem{notation}{Notations}[section]
\newtheorem{example}{Example}[section]
\newcommand{\N}{\mathbb{N}}
\newcommand{\Q}{\mathbb{Q}}
\renewcommand{\geq}{\geqslant}
\renewcommand{\leq}{\leqslant}
\newcommand{\id}{\mathrm{Id}}
\newcommand{\Sym}{\mathbf{Sym}}
\newcommand{\WSym}{\mathbf{WSym}}
\newcommand{\QSym}{\mathbf{QSym}}
\newcommand{\WQSym}{\mathbf{WQSym}}
\newcommand{\gr}{\mathcal{G}}
\newcommand{\GR}{\mathscr{G}}
\newcommand{\hgr}{\mathcal{H}_{\gr}}
\newcommand{\hgrd}{\mathcal{H}_{\gr(\calD)}}
\newcommand{\hGR}{\mathcal{H}_{\GR}}
\newcommand{\mgr}{M_{\gr}}
\newcommand{\mgrd}{M_{\gr(\calD)}}
\newcommand{\calf}{\mathcal{F}}
\newcommand{\F}{\mathbb{F}}
\newcommand{\SP}{\mathcal{SP}}
\newcommand{\PW}{\mathcal{PW}}
\newcommand{\VC}{\mathbb{VC}}
\newcommand{\PC}{\mathbb{PC}}
\newcommand{\PVC}{\mathbb{PVC}}
\newcommand{\IP}{\mathbb{IP}}
\newcommand{\rel}{\mathcal{R}}
\newcommand{\X}{\mathbf{X}}
\newcommand{\Y}{\mathbf{Y}}
\newcommand{\bfF}{\mathbf{F}}
\newcommand{\calF}{\mathcal{F}}
\newcommand{\calD}{\mathcal{D}}
\newcommand{\rep}{\mathrm{rep}}
\newcommand{\Rep}{\mathrm{Rep}}
\newcommand{\pack}{\mathrm{Pack}}
\newcommand{\wt}{\mathrm{wt}}
\newcommand{\calA}{\mathfrak{A}}
\newcommand{\grun}{\begin{tikzpicture}[line cap=round,line join=round,>=triangle 45,x=0.5cm,y=0.5cm]
\clip(-0.2,-0.1) rectangle (0.2,0.2);
\begin{scriptsize}
\draw [fill=black] (0.,0.) circle (1pt);
\end{scriptsize}
\end{tikzpicture}}
\newcommand{\grdeux}{\begin{tikzpicture}[line cap=round,line join=round,>=triangle 45,x=0.5cm,y=0.5cm]
\clip(-.2,-.1) rectangle (0.2,0.7);
\draw [line width=.5pt] (0.,0.5)-- (0.,0.);
\begin{scriptsize}
\draw [fill=black] (0.,0.) circle (1pt);
\draw [fill=black] (0.,0.5) circle (1pt);
\end{scriptsize}
\end{tikzpicture}}
\newcommand{\grtroisun}{\begin{tikzpicture}[line cap=round,line join=round,>=triangle 45,x=0.5cm,y=0.5cm]
\clip(-0.5,-0.1) rectangle (0.5,0.7);
\draw [line width=0.5pt] (0.,0.)-- (-0.3,0.5);
\draw [line width=0.5pt] (0.,0.)-- (0.3,0.5);
\draw [line width=0.5pt] (-0.3,0.5)-- (0.3,0.5);
\begin{scriptsize}
\draw [fill=black] (-0.3,0.5) circle (1pt);
\draw [fill=black] (0.,0.) circle (1pt);
\draw [fill=black] (0.3,0.5) circle (1pt);
\end{scriptsize}
\end{tikzpicture}}
\newcommand{\grtroisdeux}{\begin{tikzpicture}[line cap=round,line join=round,>=triangle 45,x=0.5cm,y=0.5cm]
\clip(-0.5,-0.1) rectangle (0.5,0.7);
\draw [line width=0.5pt] (0.,0.)-- (-0.3,0.5);
\draw [line width=0.5pt] (0.,0.)-- (0.3,0.5);
\begin{scriptsize}
\draw [fill=black] (-0.3,0.5) circle (1pt);
\draw [fill=black] (0.,0.) circle (1pt);
\draw [fill=black] (0.3,0.5) circle (1pt);
\end{scriptsize}
\end{tikzpicture}}
\newcommand{\grquatreun}{\begin{tikzpicture}[line cap=round,line join=round,>=triangle 45,x=0.5cm,y=0.5cm]
\clip(-0.2,-0.1) rectangle (0.7,0.7);
\begin{scriptsize}
\draw [fill=black] (0.,0.) circle (1pt);
\draw [fill=black] (0.,0.5) circle (1pt);
\draw [fill=black] (0.5,0.5) circle (1pt);
\draw [line width=.5pt] (0.,0.)-- (0.5,0.);
\draw [line width=0.5pt] (0.,0.)-- (0.,0.5);
\draw [line width=0.5pt] (0.5,0.)-- (0.5,0.5);
\draw [line width=0.5pt] (0.0,0.)-- (0.5,0.5);
\draw [line width=0.5pt] (0.0,0.5)-- (0.5,0.5);
\draw [line width=0.5pt] (0.0,0.5)-- (0.5,0.);
\draw [fill=black] (0.5,0.) circle (1pt);
\end{scriptsize}
\end{tikzpicture}}
\newcommand{\grquatredeux}{\begin{tikzpicture}[line cap=round,line join=round,>=triangle 45,x=0.5cm,y=0.5cm]
\clip(-0.2,-0.1) rectangle (0.7,0.7);
\begin{scriptsize}
\draw [fill=black] (0.,0.) circle (1pt);
\draw [fill=black] (0.,0.5) circle (1pt);
\draw [fill=black] (0.5,0.5) circle (1pt);
\draw [line width=.5pt] (0.,0.)-- (0.5,0.);
\draw [line width=0.5pt] (0.,0.)-- (0.,0.5);
\draw [line width=0.5pt] (0.5,0.)-- (0.5,0.5);
\draw [line width=0.5pt] (0.,0.)-- (0.5,0.5);
\draw [line width=0.5pt] (0.,0.5)-- (0.5,0.5);
\draw [fill=black] (0.5,0.) circle (1pt);
\end{scriptsize}
\end{tikzpicture}}
\newcommand{\grquatretrois}{\begin{tikzpicture}[line cap=round,line join=round,>=triangle 45,x=0.5cm,y=0.5cm]
\clip(-0.2,-0.1) rectangle (0.7,0.7);
\begin{scriptsize}
\draw [fill=black] (0.,0.) circle (1pt);
\draw [fill=black] (0.,0.5) circle (1pt);
\draw [fill=black] (0.5,0.5) circle (1pt);
\draw [line width=.5pt] (0.,0.)-- (0.5,0.);
\draw [line width=0.5pt] (0.,0.)-- (0.,0.5);
\draw [line width=0.5pt] (0.5,0.)-- (0.5,0.5);
\draw [line width=0.5pt] (0.,0.)-- (0.5,0.5);
\draw [fill=black] (0.5,0.) circle (1pt);
\end{scriptsize}
\end{tikzpicture}}
\newcommand{\grquatrequatre}{\begin{tikzpicture}[line cap=round,line join=round,>=triangle 45,x=0.5cm,y=0.5cm]
\clip(-0.2,-0.1) rectangle (0.7,0.7);
\begin{scriptsize}
\draw [fill=black] (0.,0.) circle (1pt);
\draw [fill=black] (0.,0.5) circle (1pt);
\draw [fill=black] (0.5,0.5) circle (1pt);
\draw [line width=.5pt] (0.,0.)-- (0.5,0.);
\draw [line width=0.5pt] (0.,0.)-- (0.,0.5);
\draw [line width=0.5pt] (0.5,0.)-- (0.5,0.5);
\draw [line width=0.5pt] (0.,0.5)-- (0.5,0.5);
\draw [fill=black] (0.5,0.) circle (1pt);
\end{scriptsize}
\end{tikzpicture}}
\newcommand{\grquatrecinq}{\begin{tikzpicture}[line cap=round,line join=round,>=triangle 45,x=0.5cm,y=0.5cm]
\clip(-0.2,-0.1) rectangle (0.7,0.7);
\begin{scriptsize}
\draw [fill=black] (0.,0.) circle (1pt);
\draw [fill=black] (0.,0.5) circle (1pt);
\draw [fill=black] (0.5,0.5) circle (1pt);
\draw [line width=.5pt] (0.,0.)-- (0.5,0.);
\draw [line width=0.5pt] (0.,0.)-- (0.,0.5);
\draw [line width=0.5pt] (0.,0.)-- (0.5,0.5);
\draw [fill=black] (0.5,0.) circle (1pt);
\end{scriptsize}
\end{tikzpicture}}
\newcommand{\grquatresix}{\begin{tikzpicture}[line cap=round,line join=round,>=triangle 45,x=0.5cm,y=0.5cm]
\clip(-0.2,-0.1) rectangle (0.7,0.7);
\begin{scriptsize}
\draw [fill=black] (0.,0.) circle (1pt);
\draw [fill=black] (0.,0.5) circle (1pt);
\draw [fill=black] (0.5,0.5) circle (1pt);
\draw [line width=.5pt] (0.,0.)-- (0.5,0.);
\draw [line width=0.5pt] (0.,0.)-- (0.,0.5);
\draw [line width=0.5pt] (0.5,0.)-- (0.5,0.5);
\draw [fill=black] (0.5,0.) circle (1pt);
\end{scriptsize}
\end{tikzpicture}}
\newcommand{\grdun}[1]{\begin{tikzpicture}[line cap=round,line join=round,>=triangle 45,x=0.5cm,y=0.5cm]
\clip(-.2,-.1) rectangle (0.5,0.5);
\begin{scriptsize}
\draw [fill=black] (0.,0.) circle (1pt);
\end{scriptsize}
\draw(0.3,0.1) node {\tiny #1};
\end{tikzpicture}}
\newcommand{\grdunbis}[1]{\begin{tikzpicture}[line cap=round,line join=round,>=triangle 45,x=0.5cm,y=0.5cm]
\clip(-.2,-.1) rectangle (1.5,.5);
\begin{scriptsize}
\draw [fill=black] (0.,0.) circle (1pt);
\end{scriptsize}
\draw(0.8,0.1) node {\tiny #1};
\end{tikzpicture}}
\newcommand{\grdunter}[1]{\begin{tikzpicture}[line cap=round,line join=round,>=triangle 45,x=0.5cm,y=0.5cm]
\clip(-.2,-.1) rectangle (2.5,.5);
\begin{scriptsize}
\draw [fill=black] (0.,0.) circle (1pt);
\end{scriptsize}
\draw(1.3,0.1) node {\tiny #1};
\end{tikzpicture}}
\newcommand{\grddeux}[2]{\begin{tikzpicture}[line cap=round,line join=round,>=triangle 45,x=0.5cm,y=0.5cm]
\clip(-.2,-.1) rectangle (0.5,1.);
\draw [line width=.5pt] (0.,0.5)-- (0.,0.);
\begin{scriptsize}
\draw [fill=black] (0.,0.) circle (1pt);
\draw [fill=black] (0.,0.5) circle (1pt);
\end{scriptsize}
\draw(0.3,0.1) node {\tiny #1};
\draw(0.3,0.6) node {\tiny #2};
\end{tikzpicture}}
\newcommand{\grddeuxbis}[2]{\begin{tikzpicture}[line cap=round,line join=round,>=triangle 45,x=0.5cm,y=0.5cm]
\clip(-.2,-.1) rectangle (1.5,1.);
\draw [line width=.5pt] (0.,0.5)-- (0.,0.);
\begin{scriptsize}
\draw [fill=black] (0.,0.) circle (1pt);
\draw [fill=black] (0.,0.5) circle (1pt);
\end{scriptsize}
\draw(0.8,0.1) node {\tiny #1};
\draw(0.3,0.6) node {\tiny #2};
\end{tikzpicture}}
\newcommand{\grdtroisun}[3]{\begin{tikzpicture}[line cap=round,line join=round,>=triangle 45,x=0.5cm,y=0.5cm]
\clip(-0.5,-0.1) rectangle (0.5,1.2);
\draw [line width=0.5pt] (0.,0.)-- (-0.3,0.5);
\draw [line width=0.5pt] (0.,0.)-- (0.3,0.5);
\draw [line width=0.5pt] (-0.3,0.5)-- (0.3,0.5);
\begin{scriptsize}
\draw [fill=black] (-0.3,0.5) circle (1pt);
\draw [fill=black] (0.,0.) circle (1pt);
\draw [fill=black] (0.3,0.5) circle (1pt);
\end{scriptsize}
\draw(0.35,0.1) node {\tiny #1};
\draw(0.3,0.8) node {\tiny #2};
\draw(-0.3,0.8) node {\tiny #3};
\end{tikzpicture}}
\newcommand{\grdtroisdeux}[3]{\begin{tikzpicture}[line cap=round,line join=round,>=triangle 45,x=0.5cm,y=0.5cm]
\clip(-0.5,-0.1) rectangle (0.5,1.2);
\draw [line width=0.5pt] (0.,0.)-- (-0.3,0.5);
\draw [line width=0.5pt] (0.,0.)-- (0.3,0.5);
\begin{scriptsize}
\draw [fill=black] (-0.3,0.5) circle (1pt);
\draw [fill=black] (0.,0.) circle (1pt);
\draw [fill=black] (0.3,0.5) circle (1pt);
\end{scriptsize}
\draw(0.35,0.1) node {\tiny #1};
\draw(0.3,0.8) node {\tiny #2};
\draw(-0.3,0.8) node {\tiny #3};
\end{tikzpicture}}
\begin{document}

\maketitle

\begin{abstract}
The chromatic polynomial is characterized as the unique polynomial invariant of graphs, compatible with two interacting bialgebras structures: 
the first coproduct is given by partitions of vertices into two parts, the second one by a contraction-extraction process.
This gives Hopf-algebraic proofs of Rota's result on the signs of coefficients of chromatic polynomials and of Stanley's interpretation
of the values at negative integers of chromatic polynomials. We also consider chromatic symmetric functions and their noncommutative versions.
\end{abstract}

\AMSclass{16T30 05C15 05C31}

\tableofcontents

\section*{Introduction}

In graph theory, the chromatic polynomial, introduced by Birkhoff and Lewis \cite{BL} in order to treat the four color theorem,
is a polynomial invariant attached to a graph; its values at $X=k$ gives the number of valid colorings of the graph with $k$ colors,
for any integer $k\geq 1$. Numerous results are known on this object, as for example the alternation of signs of its coefficients, a result due to Rota \cite{Rota},
proved with the help of the Möbius inversion in certain lattices.

Our aim here is to insert chromatic polynomials into the theory of combinatorial Hopf algebras, and to recover new proofs of these classical results.
Our main tools, presented in the first section, will be a Hopf algebra  $(\hgr,m,\Delta)$ and a bialgebra $(\hgr,m,\delta)$, both based on graphs. 
They share the same product, given by disjoint union; the first (cocommutative) coproduct, denoted by $\Delta$, 
is given by partitions of vertices  into two parts;  the second (not cocommutative) one, denoted by $\delta$, is given by a contraction-extraction process. 
For example:
\begin{align*}
\Delta(\grtroisun)&=\grtroisun\otimes 1+1\otimes \grtroisun+3\grdeux\otimes \grun+3\grun \otimes \grdeux,\\
\delta(\grtroisun)&=\grun \otimes \grtroisun+3\grdeux\otimes \grun\grdeux+\grtroisun\otimes \grun\grun\grun,
\end{align*}
or, in a decorated version, where $a,b,c$ are positive integers:
\begin{align*}
\Delta(\grdtroisun{$a$}{$c$}{$b$})&=\grdtroisun{$a$}{$c$}{$b$}\otimes 1+1\otimes \grdtroisun{$a$}{$c$}{$b$}
+\grddeux{$a$}{$b$}\otimes \grdun{$c$}+\grddeux{$a$}{$c$}\otimes \grdun{$b$}+\grddeux{$b$}{$c$}\otimes \grdun{$a$}
+\grdun{$c$} \otimes \grddeux{$a$}{$b$}+\grdun{$b$} \otimes \grddeux{$a$}{$c$}+\grdun{$a$} \otimes \grddeux{$b$}{$c$},\\
\delta(\grdtroisun{$a$}{$c$}{$b$})&=\grdunter{$a+b+c$} \otimes \grdtroisun{$a$}{$c$}{$b$}
+\grddeuxbis{$a+b$}{$c$}\otimes \grdun{$c$}\grddeux{$a$}{$b$}
+\grddeuxbis{$a+c$}{$b$}\otimes \grdun{$b$}\grddeux{$a$}{$c$}
+\grddeuxbis{$b+c$}{$a$}\otimes \grdun{$a$}\grddeux{$b$}{$c$}
+\grdtroisun{$a$}{$c$}{$b$}\otimes\grdun{$a$}\grdun{$b$}\grdun{$c$}.
\end{align*}
We obtain a Hopf algebra $(\hgr,m,\Delta)$, graded by the cardinality of graphs, and connected, 
that is to say its connected component of degree $0$
is reduced to the base field $\Q$: this is what is usually called a \emph{combinatorial Hopf algebra}. On the other side, $(\hgr,m,\delta)$
is a bialgebra, graded by the degree defined by:
\[\deg(G)=\sharp\{\mbox{vertices of $G$}\}-\sharp\{\mbox{connected components of $G$}\}.\]
These two bialgebras are in cointeraction, a notion described in \cite{CEFM,FoissyEhrhart,Manchon,Manchon2}:
 $(\hgr,m,\Delta)$ is a bialgebra-comodule over
$(\hgr,m,\delta)$, see Theorem \ref{theo7}. Another example of interacting bialgebras is the pair $(\Q[X],m,\Delta)$ and $(\Q[X],m,\delta)$,
where $m$ is the usual product of $\Q[X]$ and the two coproducts $\Delta$ and $\delta$ are defined by:
\begin{align*}
\Delta(X)&=X\otimes 1+1\otimes X,&\delta(X)&=X\otimes X.
\end{align*}

This has interesting consequences, proved and used on quasi-posets in \cite{FoissyEhrhart}, listed here in Theorem \ref{theo8}. In particular:
\begin{enumerate}
\item There exists a unique morphism $\phi_1:\hgr\rightarrow\Q[X]$, which is a Hopf algebra morphism
from $(\hgr,m,\Delta)$ to $(\Q[X],m,\Delta)$ and also a bialgebra morphism
from $(\hgr,m,\delta)$ to $(\Q[X],m,\delta)$.
\item We denote by $(\mgr,*)$ the monoid of characters of $(\hgr,m,\delta)$. This monoid acts on the set
$E_{\hgr\longrightarrow\Q[X]}$ of Hopf algebra morphisms from $(\hgr,m,\Delta)$ to $(\Q[X],m,\Delta)$, via the map:
\begin{align*}
\leftarrow&:\left\{\begin{array}{rcl}
E_{\hgr\rightarrow\Q[X]}\times \mgr&\longrightarrow&E_{\hgr\rightarrow\Q[X]}\\
(\phi,\lambda)&\longrightarrow&\phi\leftarrow\lambda=(\phi \otimes \lambda)\circ \delta.
\end{array}\right.
\end{align*}
Moreover, the action of $\mgr$ over $E_{\hgr\rightarrow\Q[X]}$ is free of rank 1, with $\phi_1$ as a generator.
\end{enumerate}
The morphism $\phi_1$ is described in the second section: for any graph $G$, $\phi_1(G)$ is the chromatic polynomial $P_{chr}(G)$ 
(Theorem \ref{theo11}). This characterizes the chromatic polynomial as the unique polynomial invariant on graphs
compatible with both bialgebraic structures. 
To this morphim is attached a character denoted by $\lambda_{chr}$, which allows to reconstruct $P_{chr}$
trough the action of $\mgr$: for any graph $G$,
\[P_{chr}(G)=\sum_{\sim} \lambda_{chr}(G|\sim)X^{cl(\sim)},\]
where the sum is over a family of equivalences $\sim$ on the set of vertices of $G$, $cl(\sim)$ 
is the number of equivalence classes of $\sim$,
and $G|\sim$ is a graph obtained by restricting $G$ to the classes of $\sim$ (Corollary \ref{cor12}).
Moreover, the inverse of $\lambda_{chr}$ for the convolution associated to the coproduct $\delta$ is the character $\lambda_0$
which sends any graph to $1$, whereas the inverse of $\lambda_{chr}$  for the convolution associated to the coproduct $\delta$ is
related to acyclic orientations of graphs and allows to describe the antipode of $(\hgr,m,\Delta)$, see Corollary \ref{corantipode}. 
Therefore, the knowledge of the chromatic character implies the knowledge of the chromatic polynomial;
we give a formula for computing this chromatic character on any graph with the notion (used in Quantum Field Theory) of forests,
through the antipode of a quotient of $(\hgr,m,\delta)$, see Proposition \ref{prop13}.
We give a Hopf-algebraic proof of the classical way to compute the chromatic polynomial  by induction on the number of edges
by an extraction-contraction of an edge in Proposition \ref{prop15}, and deduce a similar way to compute
the chromatic character.
As consequences, we obtain proofs of Rota's result on the sign of the coefficients of a chromatic polynomial (Corollary \ref{cor19})
and of Stanley's interpretation of values at negative integers of a chromatic polynomial in Corollary \ref{cor24}.
The link with Rota's proof is made via the lattice attached to a graph, defined in Proposition \ref{prop16}.\\

We then study morphisms from this double bialgebra of graphs to the function of quasisymmetric functions $\QSym$
\cite{Aguiar2,Gelfand,Hazewinkel,MR3,Stanleyenu}. For this, we need to generalize the construction on graphs to graphs decorated
by elements of an abelian semigroup $(\calD,+)$, obtaining a Hopf algebra $(\hgrd,m,\Delta)$
and a bialgebra $(\hgrd,m,\delta)$ on the same algebra, which we give a graduation with the help of a map
$\wt:(\calD,+)\longrightarrow \N_{>0}$ (Proposition \ref{propgrad}).
Using Aguiar, Bergeron and Sottile's theory of combinatorial Hopf algebras \cite{Aguiar2}, 
we introduce a homogeneous Hopf algebra morphism $F_{chr}^{(\calD)}$ from $\hgrd$ to $\QSym$.
 This morphism $F_{chr}^{(\calD)}$ sends any graph $G$ to its 
chromatic symmetric function, as defined by Stanley \cite{Stanley2}. 
Moreover, $F_{chr}^{(\calD)}$ is the unique potential homogeneous morphism compatible with both bialgebraic structure
and we prove that it indeed satisfies this property if, and only if the map 
$\wt$ giving the graduation is a semigroup morphism (Theorems \ref{theo27} and \ref{theo28}).
Note that this condition excludes the nondecorated case, identified with $\calD=\{*\}$, giving its unique semigroup structure
$*+*=*$ and the graduation defined by $\wt(*)=1$.
As a consequence, we obtain a diagram of Hopf algebra morphisms:
\[\xymatrix{\hgrd\ar^{P_{chr}^{(\calD)}}[r] \ar_{F_{chr}^{(\calD)}}[d]&\Q[X]\\
\QSym \ar[ru]_{H}&}\]
where $H$ is given with the help of Hilbert polynomials (Proposition \ref{prop25}).
The last section deals with a non-commutative version of the chromatic symmetric function: the Hopf algebra of graphs
is replaced by a non-commutative Hopf algebra of indexed graphs, and $\QSym$ 
is replaced by the Hopf algebra of packed words $\WQSym$. For any indexed graph $G$, its non-commutative chromatic 
symmetric function $\bfF_{chr}(G)$ can also be seen as a symmetric formal series
in non-commutative indeterminates  (Theorem \ref{theo35}): we recover in this way Gebhard and Sagan's chromatic symmetric function  
introduced in \cite{Gebhard} and related in \cite{Rosas} to MacMahon symmetric functions.\\

\textbf{Acknowledgment.} 
The research leading these results was partially supported by the French National Research Agency under the reference
ANR-12-BS01-0017.\\

\textbf{Thanks}. I am grateful to Mercedes Rosas, who pointed the link with Gebhard and Sagan's 
chromatic symmetric function in noncommuting variables, and to Viviane Pons, who noticed an important mistake in the preceding
version of the paper. \\

\begin{notation}
\begin{enumerate}
\item All the vector spaces in the text are taken over $\Q$.
\item We denote by $\N_{>0}=\{1,2,3,\ldots\}$ the set of positive integers.
\item For any integer $n \geq 0$, we denote by $[n]$ the set $\{1,\ldots,n\}$. In particular, $[0]=\emptyset$.
\item The usual product of the polynomial algebra $\Q[X]$ is denoted by $m$. 
This algebra is given two bialgebra structures, defined by:
\begin{align*}
\Delta(X)&=X\otimes 1+1\otimes X,&
\delta(X)&=X\otimes X.
\end{align*}
Identifying $\Q[X,Y]$ and $\Q[X]\otimes \Q[Y]$:
\begin{align*}
&\forall P\in \Q[X],&\Delta(P)(X,Y)&=P(X+Y),&\delta(P)(X,Y)&=P(XY).
\end{align*}
The counit of $\Delta$ is given by:
\begin{align*}
&\forall P\in \Q[X],&\varepsilon(P)&=P(0).
\end{align*}
The counit of $\delta$ is given by:
\begin{align*}
&\forall P\in \Q[X],&\varepsilon'(P)&=P(1).
\end{align*}
Moreover, $(\Q[X],m,\Delta)$ is a Hopf algebra, of antipode $S$ sending any $P(X)\in \Q[X]$ to $P(-X)$. 
\end{enumerate}\end{notation}

\section{Hopf algebraic structures on graphs}

\label{s1}

We refer to \cite{Harary} for classical results and vocabulary on graphs. Recall that a graph is a pair $G=(V(G),E(G))$, 
where $V(G)$ is a finite set, and $E(G)$ is a subset of the set of parts of $V(G)$ of cardinality $2$.
In sections \ref{s1} and \ref{s2},  we  shall work with isoclasses of graphs, which we will simply call graphs.
For any graph $G$, we denote by $|G|$  the cardinality of $V(G)$ 
and by $cc(G)$ the number of its connected components. By convention, the empty graph $1$ is considered as non connected.
The set of graphs is denoted by $\gr$. For example, here are graphs $G$ with $|G|\leq 4$:
\begin{align*}
&1;&&\grun;&&\grdeux,\grun\grun;&&\grtroisun,\grtroisdeux,\grdeux\grun,\grun\grun\grun;&&
\grquatreun,\grquatredeux,\grquatretrois,\grquatrequatre,\grquatrecinq,\grquatresix,
\grtroisun\grun,\grtroisdeux\grun,\grdeux\grdeux,\grdeux\grun,\grun\grun\grun\grun.
\end{align*}
A graph is \emph{totally disconnected} if it has no edge.\\

We denote by $\hgr$ the vector space generated by the set of graphs. The disjoint union of graphs gives it a commutative,
associative product $m$. As an algebra, $\hgr$ is (isomorphic to) the free commutative algebra generated by connected graphs.

\subsection{The first coproduct}

\begin{defi}
Let $G$ be a graph and $I\subseteq V(G)$. The graph $G_{\mid I}$ is defined by:
\begin{itemize}
\item $V(G_{\mid I})=I$.
\item $E(G_{\mid I})=\{ \{x,y\}\in E(G)\mid x,y\in I\}$.
\end{itemize}\end{defi}

We refer to \cite{Abe,Kassel,Sweedler} for classical results and notations on bialgebras and Hopf algebras.
The following Hopf algebra is introduced in \cite{Schmitt}:

\begin{prop} \label{prop2}
We define a coproduct $\Delta$ on $\hgr$ by:
\begin{align*}
&\forall G\in \gr,&\Delta(G)&=\sum_{V(G)=I\sqcup J} G_{\mid I}\otimes G_{\mid J}.
\end{align*}
Then $(\hgr,m,\Delta)$ is a graded, connected, cocommutative Hopf algebra.
Its counit is given by:
\begin{align*}
&\forall G\in \gr,&\varepsilon(G)&=\delta_{G,1}.
\end{align*}\end{prop}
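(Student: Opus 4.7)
The plan is to verify, in order, coassociativity, cocommutativity, the counit axiom, multiplicativity of $\Delta$, and then the grading and connectedness. Once these are established, the existence of an antipode follows for free from the standard fact that every graded connected bialgebra carries a unique (convolution) antipode defined by the usual recursion on the degree, so that $(\hgr,m,\Delta)$ will automatically be a Hopf algebra.

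For coassociativity, I would compute $(\Delta \otimes \mathrm{id}) \circ \Delta(G)$ and $(\mathrm{id} \otimes \Delta) \circ \Delta(G)$ and check that both equal
$$\sum_{V(G) = I_1 \sqcup I_2 \sqcup I_3} G_{\mid I_1} \otimes G_{\mid I_2} \otimes G_{\mid I_3},$$
the sum running over ordered three-part set partitions of $V(G)$. This uses only the transitivity of restriction: if $J \subseteq I \subseteq V(G)$ then $(G_{\mid I})_{\mid J} = G_{\mid J}$, which is immediate from the definition of $G_{\mid I}$. Cocommutativity is then the symmetry of the defining sum under swapping $I$ and $J$. For the counit axiom, I would note that $\varepsilon(G_{\mid I}) = \delta_{G_{\mid I},1}$ vanishes unless $I = \emptyset$; hence in $(\varepsilon \otimes \mathrm{id}) \circ \Delta(G)$ only the term with $I = \emptyset$ survives, giving $G_{\mid V(G)} = G$, and the other side is handled symmetrically.

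For compatibility of $\Delta$ with $m$, given two graphs $G_1, G_2$, every subset $K \subseteq V(G_1 G_2) = V(G_1) \sqcup V(G_2)$ decomposes uniquely as $K = K_1 \sqcup K_2$ with $K_i \subseteq V(G_i)$, and since the disjoint union has no edges between $V(G_1)$ and $V(G_2)$, one obtains $(G_1 G_2)_{\mid K} = (G_1)_{\mid K_1}\,(G_2)_{\mid K_2}$. Re-indexing the sum immediately yields $\Delta(G_1 G_2) = \Delta(G_1) \Delta(G_2)$. Setting $\deg(G) = |V(G)|$, the equality $|G_{\mid I}| + |G_{\mid J}| = |V(G)|$ shows that $\Delta$ is graded, and the component of degree $0$ is spanned by the empty graph, which we identify with $1 \in \Q$, giving connectedness.

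No step is really an obstacle: the proof is a sequence of direct checks. The only place requiring a moment of care is the bialgebra axiom, where one must genuinely exploit the fact that a disjoint union of graphs has no edges between the two vertex sets — without this, the identity $(G_1 G_2)_{\mid K} = (G_1)_{\mid K_1}(G_2)_{\mid K_2}$ on which the compatibility rests would fail.
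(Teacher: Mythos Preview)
Your proof is correct and follows essentially the same approach as the paper: both verify multiplicativity of $\Delta$ via the unique decomposition of subsets of $V(G_1)\sqcup V(G_2)$, and coassociativity via the transitivity of restriction $(G_{\mid I})_{\mid J}=G_{\mid J}$ for $J\subseteq I$. Your write-up is in fact more complete, since the paper omits the explicit checks of the counit axiom, the grading, connectedness, and the existence of the antipode, all of which you supply.
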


\begin{proof} If $G,H$ are two graphs, then $V(GH)=V(G)\sqcup V(H)$, so:
\begin{align*}
\Delta(GH)&=\sum_{\substack{V(G)=I\sqcup J,\\V(H)=K\sqcup L}} GH{\mid I\sqcup K} \otimes GH_{\mid J\sqcup L}\\
&=\sum_{\substack{V(G)=I\sqcup J,\\V(H)=K\sqcup L}} G_{\mid I} H_{\mid K}\otimes G_{\mid J} H_{\mid L}\\
&=\Delta(G)\Delta(H).
\end{align*}
If $G$ is a graph, and $I\subseteq J\subseteq V(G)$, then $(G_{\mid I})_{\mid J}=G_{\mid J}$. Hence:
\begin{align*}
(\Delta \otimes \id)\circ \Delta(G)&=\sum_{\substack{V(G)=I\sqcup L,\\ I=J\sqcup K}} (G_{\mid I})_{\mid J}\otimes (G_{\mid I})_{\mid K}
\otimes G_{\mid L}\\
&=\sum_{V(G)=J\sqcup K\sqcup L} G_{\mid J}\otimes G_{\mid K}\otimes G_{\mid L}\\
&=\sum_{\substack{V(G)=J\sqcup I,\\ I=K\sqcup L}} G_{\mid J} \otimes (G_{\mid I})_{\mid K}\otimes (G_{\mid I})_{\mid L}\\
&=(\id \otimes \Delta)\circ \Delta(G).
\end{align*}
So $\Delta$ is coassociative. It is obviously cocommutative. \end{proof}

\begin{example}
\begin{align*}
\Delta(\grun)&=\grun \otimes 1+1\otimes \grun,\\
\Delta(\grdeux)&=\grdeux\otimes 1+1\otimes \grdeux+2\grun \otimes \grun,\\
\Delta(\grtroisun)&=\grtroisun\otimes 1+1\otimes \grtroisun+3\grdeux\otimes \grun+3\grun \otimes \grdeux,\\
\Delta(\grtroisdeux)&=\grtroisdeux\otimes 1+1\otimes \grtroisdeux+2\grdeux \otimes \grun+\grun\grun\otimes \grun+
2\grun \otimes \grdeux+\grun\otimes \grun\grun.
\end{align*}\end{example}

\subsection{The second coproduct}

\begin{notation}
Let $V$ be a finite set $\sim$ be an equivalence on $V$. 
\begin{itemize}
\item We denote by $\pi_\sim:V\longrightarrow V/\sim$ the canonical surjection.
\item We denote by $cl(\sim)$ the cardinality of $V/\sim$.
\end{itemize}\end{notation}

\begin{defi}
Let $G$ a graph, and $\sim$ be an equivalence relation on $V(G)$. 
\begin{enumerate}
\item (Contraction). The graph $V(G)/\sim$ is defined by:
\begin{align*}
V(G/\sim)&=V(G)/\sim,\\
E(G/\sim)&=\{\{\pi_\sim(x),\pi_\sim(y)\}\mid \{x,y\}\in E(G),\: \pi_\sim(x)\neq \pi_\sim(y)\}.
\end{align*}
\item (Extraction). The graph $V(G)|\sim$ is defined by:
\begin{align*}
V(G|\sim)&=V(G),\\
E(G|\sim)&=\{\{x,y\}\in E(G) \mid  x\sim y\}.
\end{align*}
\item We shall write $\sim \triangleleft G$ if, for any $c\in V(G)/\sim$, $G_{\mid c}$ is connected.
\end{enumerate}\end{defi}

Roughly speaking, $G/\sim$ is obtained by contracting each equivalence class of $\sim$ to a single vertex, and by deleting
the loops and multiple edges created in the process; $G\mid \sim$ is obtained by deleting the edges which extremities are not equivalent,
so is the product of the restrictions of $G$ to the equivalence classes of $\sim$. \\

We now define a coproduct on $\hgr$. This coproduct, which can also be found in \cite{Schmitt}, can also be deduced from a 
general operadic construction \cite{vdLaan}, see also \cite{Aguiar}.
A similar construction is defined on various families of oriented graphs in \cite{Manchon}. 

\begin{prop} \label{prop4}
We define a coproduct $\delta$ on $\hgr$ by:
\begin{align*}
&\forall G\in \gr,& \delta(G)&=\sum_{\sim\triangleleft G} (G/\sim) \otimes (G|\sim).
\end{align*}
Then $(\hgr,m,\delta)$ is a bialgebra.
Its counit is given by:
\begin{align*}
&\forall G\in \gr,&\varepsilon'(G)&=\begin{cases}
1\mbox{ if $G$ is totally disconnected},\\
0\mbox{ otherwise}.
\end{cases}
\end{align*} 
It is graded, putting:
\begin{align*}
&\forall G\in \gr,& \deg(G)&=|G|-cc(G).
\end{align*}
In particular, a basis of its homogeneous component of degree $0$ is given by totally disconnected graphs, including $1$. \end{prop}

\begin{proof}
Let $G$, $H$ be graphs and $\sim$ be an equivalence on $V(GH)=V(G)\sqcup V(H)$. We put $\sim'=\sim_{\mid V(G)}$
and $\sim''_{\mid V(H)}$. The connected components of $GH$ are the ones of $G$ and $H$, so
 $\sim\triangleleft GH$ if, and only if, the two following conditions are satisfied:
\begin{itemize}
\item $\sim'\triangleleft G$ and $\sim''\triangleleft H$.
\item If $x\sim y$, then $(x,y)\in V(G)^2\sqcup V(H)^2$ .
\end{itemize}
Note that the second point implies that $\sim$ is entirely determined by $\sim'$ and $\sim''$.
Moreover, if this holds, $(GH)/\sim=(G/\sim')(H/\sim'')$ and $(GH)|\sim=(G|\sim')(H|\sim'')$, so:
\begin{align*}
\delta(GH)&=\sum_{\sim'\triangleleft G,\:\sim''\triangleleft H}(G/\sim')(H/\sim'')\otimes (G|\sim')(H|\sim'')
=\delta(G)\delta(H).
\end{align*}

Let $G$ be a graph. If $\sim\triangleleft G$, the connected components of $G/\sim$ are the image by the canonical surjection
of the connected components of $G$; the connected components of $G|\sim$ are the equivalence classes of $\sim$.
If $\sim$ and $\sim'$ are two equivalences on $G$, we shall denote $\sim' \leq \sim$ if for all $x,y\in V(G)$,
$x\sim' y$ implies $x\sim y$. Then:
\begin{align*}
(\delta \otimes \id)\circ \delta(G)&=\sum_{\sim\triangleleft G,\sim'\triangleleft G/\sim} (G/\sim)/\sim'\otimes (G/\sim)|\sim'\otimes G|\sim\\
&=\sum_{\substack{\sim,\sim'\triangleleft G,\\ \sim'\leq \sim}} (G/\sim)/\sim'\otimes (G/\sim)|\sim' \otimes G|\sim\\
&=\sum_{\substack{\sim,\sim'\triangleleft G,\\ \sim'\leq \sim}} (G/\sim')\otimes (G|\sim')/\sim\otimes (G|\sim')|\sim\\
&=\sum_{\sim\triangleleft G,\sim' \triangleleft G|\sim} (G/\sim')\otimes (G|\sim')/\sim\otimes (G|\sim')|\sim\\
&=(\id \otimes \delta)\circ \delta(G).
\end{align*}
So $\delta$ is coassociative.\\

We define two special equivalence relations $\sim_0$ and $\sim_1$ on $G$: for all $x,y\in V(G)$,
\begin{itemize}
\item $x\sim_0 y$ if, and only if, $x=y$.
\item $x\sim_1 y$ if, and only if, $x$ and $y$ are in the same connected component of $G$.
\end{itemize}
Note that $\sim_0$, $\sim_1 \triangleleft G$. Moreover, if $\sim \triangleleft G$, $G/\sim$ is not totally disconnected, except if $\sim=\sim_1$;
$G|\sim$ is not totally disconnected, except if $\sim=\sim_0$. Hence:
\begin{itemize}
\item If $G$ is totally disconnected, then $\delta(G)=G\otimes G$.
\item Otherwise, putting $n=|G|$ and $k=cc(G)$:
\[\delta(G)=\grun^k \otimes G+G\otimes \grun^n+\ker(\varepsilon')\otimes \ker(\varepsilon').\]
\end{itemize}
So $\varepsilon'$ is indeed the counit of $\delta$. \\

Let $G$ be a graph, with $n$ vertices and $k$ connected components (so of degree $n-k$). 
Let $\sim\triangleleft G$.
Then:
\begin{enumerate}
\item $G/\sim$ has cardinality $cl(\sim)$ and $k$ connected components, so is of degree $cl(\sim)-k$.
\item $G|\sim$ has cardinality $n$ and $cl(\sim)$ connected components, so is of degree $n-cl(\sim)$.
\end{enumerate}
Hence, $\deg(G/\sim)+\deg(G|\sim)=cl(\sim)-k+n-cl(\sim)=n-k=\deg(G)$: $\delta$ is homogeneous. \end{proof}

\begin{example}
\begin{align*}
\delta(\grun)&=\grun\otimes \grun,&
\delta(\grdeux)&=\grun \otimes \grdeux+\grdeux\otimes \grun\grun,\\
\delta(\grtroisun)&=\grun \otimes \grtroisun+3\grdeux\otimes \grun\grdeux+\grtroisun\otimes \grun\grun\grun,&
\delta(\grtroisdeux)&=\grun \otimes \grtroisdeux+2\grdeux\otimes \grun\grdeux+\grtroisdeux\otimes \grun\grun\grun.
\end{align*}\end{example}

\begin{remark}
 Let $G\in\gr$. The following conditions are equivalent:
\begin{itemize}
\item $\varepsilon'(G)=1$.
\item $\varepsilon'(G)\neq 0$.
\item $\deg(G)=0$.
\item $G$ is totally disconnected.
\end{itemize}\end{remark}

\subsection{Antipode for the second coproduct}

$(\hgr,m,\delta)$ is not a Hopf algebra: the group-like element $\grun$ has no inverse. However, the graduation of $(\hgr,m,\delta)$ induced
a graduation of $\hgr'=(\hgr,m,\delta)/\langle \grun-1\rangle$, which becomes a graded, connected bialgebra, hence a Hopf algebra;
we denote its antipode by $S'$. Note that, as a commutative algebra, $\hgr'$ is freely generated by connected graphs different from $\grun$. \\

The notations and ideas of the following definition and theorem come from Quantum Field Theory, where they are applied
to Renormalization with the help of Hopf algebras of Feynman graphs; see for example \cite{CK1,CK2} for an introduction.

\begin{defi} 
Let $G$ be a connected graph, $G\neq \grun$. 
\begin{enumerate}
\item A \emph{forest} of $G$ is a set $\calf$ of subsets of $V(G)$, such that:
\begin{enumerate}
\item $V(G)\in \calf$.
\item If $I,J\in \calf$, then $I\subseteq J$, or $J\subseteq I$, or $I\cap J=\emptyset$.
\item For all $I\in \calf$, $G_{\mid I}$ is connected and not reduced to the graph $\grun$. 
\end{enumerate}
The set of forests of $G$ is denoted by $\F(G)$.
\item Let $\calf\in \F(G)$; it is partially ordered by the inclusion. 
For any $I\in \F(G)$, the relation $\sim_I$ is the equivalence on $I$ which classes are the maximal elements (for the inclusion)
of $\{J\in \calf\mid J\subsetneq I\}$ (if this is non-empty), and singletons. We put:
\[G_\calf=\prod_{I\in \calf} (G_{\mid I})/\sim_I.\]
\end{enumerate}\end{defi}

\begin{example}
The graph $\grdeux$ has only one forest, $\calf=\{\grdeux\}$;  $\grdeux_\calf=\grdeux$.
The graph $\grtroisun$ has four forests:
\begin{itemize}
\item $\calf=\{\grtroisun\}$; in this case, $\grtroisun_\calf=\grtroisun$.
\item Three forests $\calf=\{\grtroisun,\grdeux\}$; for each of them, $\grtroisun_\calf=\grdeux\grdeux$.
\end{itemize}\end{example}

\begin{theo}
For any connected graph $G$, $G\neq \grun$, in $\hgr'$:
\[S'(G)=\sum_{\calf \in \F(G)} (-1)^{\sharp\calf} G_\calf.\]
\end{theo}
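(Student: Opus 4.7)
The plan is to prove the formula by induction on $|V(G)|$. The base case is $|V(G)|=2$, i.e., $G=\grdeux$: in $\hgr'$ one computes $\delta(\grdeux)=1\otimes\grdeux+\grdeux\otimes 1$, so $S(\grdeux)=-\grdeux$, which matches the unique forest $\calf=\{V(G)\}$ with $\grdeux_\calf=\grdeux$.

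For the inductive step, I would first extract a recursion from the antipode axiom $m\circ(\mathrm{id}\otimes S)\circ\delta(G)=\varepsilon'(G)\cdot 1=0$ applied in $\hgr'$ to the connected graph $G\neq\grun$. Splitting the sum $\delta(G)=\sum_{\sim\triangleleft G}(G/\sim)\otimes(G|\sim)$ according to whether $\sim=\sim_0$, $\sim=\sim_1$, or neither, and using $\grun=1$ in $\hgr'$, one obtains
$$S(G)=-G-\sum_{\substack{\sim\triangleleft G\\ \sim_0\neq\sim\neq\sim_1}}(G/\sim)\,S(G|\sim).$$
For any such middle $\sim$, each nonsingleton class $C$ satisfies $|C|<|V(G)|$ and $G_{\mid C}$ is connected (by $\sim\triangleleft G$), so the induction hypothesis applies, and since $\hgr'$ is commutative (so $S$ is an algebra morphism) and singleton classes contribute the trivial factor $1$, one has $S(G|\sim)=\prod_C S(G_{\mid C})$, the product ranging over nonsingleton classes $C$ of $\sim$.

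The combinatorial core of the proof is a bijection between nontrivial forests $\calf\in\F(G)$ (those with $\calf\neq\{V(G)\}$) and pairs $(\sim,(\calf_C)_C)$, where $\sim$ is a middle equivalence on $G$ with nonsingleton classes $C_1,\dots,C_r$ and $\calf_C\in\F(G_{\mid C})$ for each $C$. Given $\calf$, take $\sim:=\sim_{V(G)}$ — whose nonsingleton classes are exactly the maximal elements of $\calf\setminus\{V(G)\}$ — and set $\calf_C:=\{J\in\calf\mid J\subseteq C\}$; the inverse assembles $\calf=\{V(G)\}\sqcup\bigsqcup_C\calf_C$. Under this bijection $\sharp\calf=1+\sum_C\sharp\calf_C$ and, directly from the definition,
$$G_\calf=(G/\sim)\cdot\prod_C(G_{\mid C})_{\calf_C},$$
because the factor indexed by $V(G)$ in $G_\calf$ is precisely $G/\sim_{V(G)}=G/\sim$, and for each $I\subsetneq V(G)$ in $\calf$ the equivalence $\sim_I$ is the same whether computed inside $\calf$ or inside the relevant $\calf_C$. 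Summing, the trivial forest $\calf=\{V(G)\}$ contributes $-G$, and the remaining contributions reassemble through the bijection and the induction hypothesis into $-\sum_{\text{middle}}(G/\sim)\,S(G|\sim)$, which is exactly the recursion above.

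The main obstacle I expect is the careful bookkeeping in the identity $G_\calf=(G/\sim)\cdot\prod_C(G_{\mid C})_{\calf_C}$: one must check that the singleton classes of $\sim_{V(G)}$ (the vertices of $V(G)$ not contained in any maximal proper element of $\calf$) contribute exclusively to the first factor $G/\sim$ and introduce no missing pieces, and that the passage to $\hgr'$ correctly absorbs the $\grun$ factors coming from singleton classes inside $G|\sim$, so that the product on the $S$-side and the product over $\calf$ on the forest side agree without stray terms.
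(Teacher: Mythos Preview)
Your proposal is correct and follows essentially the same route as the paper's proof: induction on the number of vertices, the base case $G=\grdeux$, the antipode recursion $S(G)=-G-\sum_{\text{middle }\sim}(G/\sim)\,S(G|\sim)$ in $\hgr'$, and the bijection $\calf\leftrightarrow(\sim,(\calf_C)_C)$ identifying a nontrivial forest with its top layer of maximal proper elements together with the induced subforests. Your write-up is in fact more careful than the paper's about the role of singleton classes and the absorption of $\grun$ factors in $\hgr'$; the paper leaves these points implicit.
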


\begin{proof} By induction on the number $n$ of vertices of $G$. If $n=2$, then $G=\grdeux$.
As $\delta'(\grdeux)=\grdeux\otimes 1+1\otimes \grdeux$, $S'(\grdeux)=-\grdeux=-\grdeux_\calf$, where $\calf=\{\grdeux\}$ 
is the unique forest of $\grdeux$. Let us assume the result at all ranks $<n$. Then:
\begin{align*}
S'(G)&=-G-\sum_{\sim \triangleleft G,\: \sim\neq \sim_1} (G/\sim) S'(G|\sim)\\
&=-G-\sum_{\substack{\sim \triangleleft G,\: \sim\neq \sim_1\\ G/\sim=\{I_1,\ldots,I_k\}}} \sum_{\calf_i \in \F(G_{\mid I_i})}
(-1)^{\sharp\calf_1+\ldots+\sharp\calf_k}(G/\sim) (G_{\mid I_1})_{\calf_1}\ldots (G_{\mid I_k})_{\calf_k}.
\end{align*}
Note that each forest of $G$ different from $\{G\}$ consists of $\{G\}$ with the union of of forests
$\calf_1,\ldots,\calf_k$ on nonintersecting, connected subsets $I_1,\ldots,I_k$ of $V(G)$. Therefore:
\begin{align*}
S'(G)&=-G-\sum_{\calf\in \F(G),\:\calf\neq \{G\}} (-1)^{\sharp\calf-1}G_\calf
=\sum_{\calf \in \F(G)} (-1)^{\sharp\calf} G_\calf. \qedhere
\end{align*}
\end{proof}

\begin{example}
In $\hgr'$:
\begin{align*}
S'(\grdeux)&=-\grdeux,&
S'(\grtroisun)&=-\grtroisun+3\grdeux\grdeux,&
S'(\grtroisdeux)&=-\grtroisdeux+2\grdeux\grdeux.
\end{align*}
\end{example}

\subsection{Cointeraction}

\begin{theo} \label{theo7}
With the coaction $\delta$, $(\hgr,m,\Delta)$  and $(\hgr,m,\delta)$ are in cointeraction,
that is to say that $(\hgr,m,\Delta)$ is a $(\hgr,m,\delta)$-comodule bialgebra, or a Hopf algebra in the category of $(\hgr,m,\delta)$-comodules.
In other words:
\begin{itemize}
\item $\delta(1)=1\otimes 1$.
\item $m_{1,3,24}\circ (\delta\otimes \delta)\circ \Delta=(\Delta\otimes \id)\circ \delta$, with:
\[m_{1,3,24}:\left\{\begin{array}{rcl}
\hgr\otimes \hgr\otimes \hgr\otimes \hgr&\longrightarrow&\hgr\otimes \hgr\otimes \hgr\\
a_1\otimes b_1 \otimes a_2 \otimes b_2&\longrightarrow&a_1 \otimes a_2 \otimes b_1 b_2.
\end{array}\right.\]
\item For all $a,b\in \hgr$, $\delta(ab)=\delta(a)\delta(b)$.
\item For all $a\in \hgr$, $(\varepsilon\otimes \id)\circ \delta(a)=\varepsilon(a)1$.
\end{itemize}\end{theo}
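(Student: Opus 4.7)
The plan is to verify the four axioms one by one. Axiom $\delta(1) = 1 \otimes 1$ is immediate from the definition. Axiom $\delta(ab) = \delta(a)\delta(b)$ is already contained in the proof of Proposition \ref{prop4}. Axiom $(\varepsilon \otimes Id)\circ \delta = \varepsilon \cdot 1$ only needs to be checked on a graph $G$: if $G = 1$ both sides equal $1$, whereas if $G \neq 1$ then for every $\sim \triangleleft G$ the graph $G/\sim$ has at least one vertex, so $\varepsilon(G/\sim) = 0$, giving $0 = \varepsilon(G)\cdot 1$.

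The real content is the compatibility $m^3_{2,4}\circ(\delta\otimes\delta)\circ\Delta = (\Delta \otimes Id)\circ \delta$, which I would prove on a graph $G$ by expanding both sides and exhibiting a bijection between the indexing data. The LHS is indexed by triples $(I,J,\sim_I,\sim_J)$ where $V(G) = I \sqcup J$, $\sim_I \triangleleft G_{\mid I}$, and $\sim_J \triangleleft G_{\mid J}$, and produces
\[
(G_{\mid I}/\sim_I)\otimes (G_{\mid J}/\sim_J)\otimes (G_{\mid I}\mid \sim_I)(G_{\mid J}\mid \sim_J).
\]
The RHS is indexed by pairs $(\sim,K \sqcup L)$ where $\sim\triangleleft G$ and $V(G/\sim) = K \sqcup L$, and produces
\[
(G/\sim)_{\mid K}\otimes (G/\sim)_{\mid L}\otimes (G\mid\sim).
\]

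The bijection goes as follows. Given $(\sim,K,L)$, set $I = \pi_\sim^{-1}(K)$ and $J = \pi_\sim^{-1}(L)$; since each $\sim$-class is entirely contained in $I$ or in $J$, the restrictions $\sim_I$, $\sim_J$ of $\sim$ to $I$ and $J$ are well defined. Conversely, from $(I,J,\sim_I,\sim_J)$, define $\sim$ on $V(G)$ by declaring its classes to be exactly the classes of $\sim_I$ and of $\sim_J$, and take $K$ (resp.\ $L$) to be the image under $\pi_\sim$ of $I$ (resp.\ $J$). The condition $\sim \triangleleft G$ is equivalent, under this correspondence, to the conjunction $\sim_I \triangleleft G_{\mid I}$ and $\sim_J\triangleleft G_{\mid J}$, because the connectedness condition in the definition of $\triangleleft$ is tested class by class.

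Finally one must check that the three tensor factors match under the bijection. For any $x,y \in I$ one has $\pi_\sim(x) = \pi_{\sim_I}(x)$, and an edge of $(G/\sim)_{\mid K}$ between two classes in $K$ comes from an edge of $G$ whose endpoints lie in $I$, hence from an edge of $G_{\mid I}$; this gives $(G/\sim)_{\mid K} = G_{\mid I}/\sim_I$, and symmetrically $(G/\sim)_{\mid L} = G_{\mid J}/\sim_J$. For the extraction factor, the edges of $G\mid \sim$ are those edges of $G$ whose endpoints are $\sim$-equivalent, hence both in $I$ or both in $J$, so $G\mid\sim = (G_{\mid I}\mid\sim_I)(G_{\mid J}\mid\sim_J)$ as a disjoint union. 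The main obstacle is bookkeeping: checking carefully that contraction and restriction commute in the way claimed, and that no $\sim$-class can straddle the partition $I\sqcup J$ arising from $K \sqcup L$. Once these identifications are established, the two sums coincide term by term, proving the cointeraction identity.
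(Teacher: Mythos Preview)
Your proof is correct and follows essentially the same approach as the paper: the paper also dispatches the first, third, and fourth items quickly and proves the compatibility identity by expanding both sides on a graph $G$ and invoking the very same bijection $(\sim,K,L)\leftrightarrow(I,J,\sim_I,\sim_J)$ given by $I=\pi_\sim^{-1}(K)$, $J=\pi_\sim^{-1}(L)$, $\sim_I=\sim_{\mid I}$, $\sim_J=\sim_{\mid J}$. The only difference is that the paper states the bijection in one line without spelling out the verification that the three tensor factors match, which you do in more detail.
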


\begin{proof} The first and third points are already proved, and the fourth one is immediate for any $a\in \gr$. Let us prove the second point.
For any graph $G\in \gr$:
\begin{align*}
(\Delta \otimes \id)\circ \delta(G)&=\sum_{\sim\triangleleft G,\:V(G)/\sim=I\sqcup J} (G/\sim)_{\mid I} \otimes (G/\sim)_{\mid J} \otimes G|\sim\\
&=\sum_{\substack{V(G)=I'\sqcup J',\\\sim'\triangleleft G_{\mid I}, \:\sim''\triangleleft G_{\mid J}}}
(G_{\mid I'})/\sim'\otimes (G_{\mid J'})/\sim''\otimes (G_{\mid I'})|\sim' (G_{\mid J'})|\sim''\\
&=m_{1,3,24}\circ (\delta\otimes \delta)\circ \Delta(G).
\end{align*}
For the second equality, $I'=\pi_\sim^{-1}(I)$, $I''=\pi_\sim^{-1}(J)$, $\sim'=\sim_{\mid I'}$ and $\sim''=\sim_{\mid J'}$.  \end{proof}

\subsection{Decorated versions}

We fix a nonempty set $\calD$. 

\begin{defi}
A $\calD$-decorated graph is a pair $(G,d_G)$, where $G$ is a graph and $d_G:V(G)\longrightarrow\calD$ is a map.
We denote by $\gr(\calD)$ the set of isoclasses of $\calD$-decorated graphs, 
and by $\hgrd$ the vector space generated by $\gr(\calD)$. 
\end{defi}

\begin{example}
For any $k\in \N$, let us denote by $\gr_k(\calD)$ the set of $\calD$-decorated graphs with $k$ vertices.
Then:
\begin{align*}
\gr_1(\{a,b,c\})&=\{\grdun{$a$},\grdun{$b$},\grdun{$c$}\},\\
\gr_2(\{a,b,c\})&=\{\grdun{$a$}\grdun{$a$ },\grdun{$a$}\grdun{$b$ },\grdun{$a$}\grdun{$c$ },
\grdun{$b$}\grdun{$b$ },\grdun{$b$}\grdun{$c$ },\grdun{$c$}\grdun{$c$ },
\grddeux{$a$}{$a$ },\grddeux{$a$}{$b$ },\grddeux{$a$}{$c$ },\grddeux{$b$}{$b$ },\grddeux{$b$}{$c$ },\grddeux{$c$}{$c$ }
\},\\
\gr_3(\{a,b,c\})&=\left\{\begin{array}{c}
\grdun{$a$}\grdun{$a$}\grdun{$a$},
\grdun{$a$}\grdun{$a$}\grdun{$b$},\grdun{$a$}\grdun{$a$}\grdun{$c$},
\grdun{$a$}\grdun{$b$}\grdun{$b$},\grdun{$a$}\grdun{$b$}\grdun{$c$},\grdun{$a$}\grdun{$c$}\grdun{$c$},
\grdun{$b$}\grdun{$b$}\grdun{$b$},\grdun{$b$}\grdun{$b$}\grdun{$c$},
\grdun{$b$}\grdun{$c$}\grdun{$c$},\grdun{$c$}\grdun{$c$}\grdun{$c$},\\
\grdun{$a$}\grddeux{$a$ }{$a$},\grdun{$a$}\grddeux{$a$ }{$b$},\grdun{$a$}\grddeux{$a$ }{$c$},
\grdun{$a$}\grddeux{$b$ }{$b$},\grdun{$a$}\grddeux{$b$ }{$c$},\grdun{$a$}\grddeux{$c$ }{$c$},
\grdun{$b$}\grddeux{$a$ }{$a$},\grdun{$b$}\grddeux{$a$ }{$b$},\grdun{$b$}\grddeux{$a$ }{$c$},
\grdun{$b$}\grddeux{$b$ }{$b$},\grdun{$b$}\grddeux{$b$ }{$c$},\grdun{$b$}\grddeux{$c$ }{$c$},\\
\grdun{$c$}\grddeux{$a$ }{$a$},\grdun{$c$}\grddeux{$a$ }{$b$},\grdun{$c$}\grddeux{$a$ }{$c$},
\grdun{$c$}\grddeux{$b$ }{$b$},\grdun{$c$}\grddeux{$b$ }{$c$},\grdun{$c$}\grddeux{$c$ }{$c$},
\grdtroisdeux{$a$}{$a$}{$a$},\grdtroisdeux{$a$}{$b$}{$a$},\grdtroisdeux{$a$}{$c$}{$a$},
\grdtroisdeux{$a$}{$b$}{$b$},\grdtroisdeux{$a$}{$c$}{$b$},\grdtroisdeux{$a$}{$c$}{$c$},\\
\grdtroisdeux{$b$}{$a$}{$a$},\grdtroisdeux{$b$}{$b$}{$a$},\grdtroisdeux{$b$}{$c$}{$a$},
\grdtroisdeux{$b$}{$b$}{$b$},\grdtroisdeux{$b$}{$c$}{$b$},\grdtroisdeux{$b$}{$c$}{$c$},
\grdtroisdeux{$c$}{$a$}{$a$},\grdtroisdeux{$c$}{$b$}{$a$},\grdtroisdeux{$c$}{$c$}{$a$},
\grdtroisdeux{$c$}{$b$}{$b$},\grdtroisdeux{$c$}{$c$}{$b$},\grdtroisdeux{$c$}{$c$}{$c$},\\
\grdtroisun{$a$}{$a$}{$a$},\grdtroisun{$a$}{$b$}{$a$},\grdtroisun{$a$}{$c$}{$a$},
\grdtroisun{$a$}{$b$}{$b$},\grdtroisun{$a$}{$c$}{$b$},\grdtroisun{$a$}{$c$}{$c$},
\grdtroisun{$b$}{$b$}{$b$},\grdtroisun{$b$}{$c$}{$b$},\grdtroisun{$b$}{$c$}{$c$},
\grdtroisun{$c$}{$c$}{$c$}
\end{array}\right\}.
\end{align*}
\end{example}

If $G$ and $H$ are two $\calD$-decorated graphs, their disjoint union is naturally also a $\calD$-decorated graph:
hence, the disjoint union makes $\hgrd$ an associative, commutative algebra, which unit is the empty graph $1$.
Moreover, if $G$ is a $\calD$-decorated graph and $I\subset V(G)$, then $G_{\mid I}$ is also a $\calD$-decorated graph,
with $d_{G_{\mid I}}=(d_G)_{\mid I}$. Then $\hgrd$ is a Hopf algebra, with the coproduct defined by:
\begin{align*}
&\forall G\in \gr(\calD),&\Delta(G)&=\sum_{V(G)=I\sqcup J} G_{\mid I}\otimes G_{\mid J}.
\end{align*}

\begin{example} If $a,b,c\in \calD$:
\begin{align*}
\Delta(\grdun{$a$})&=\grdun{$a$} \otimes 1+1\otimes \grdun{$a$},\\
\Delta(\grddeux{$a$}{$b$})&=\grddeux{$a$}{$b$}\otimes 1+1\otimes \grddeux{$a$}{$b$}+\grdun{$a$} \otimes \grdun{$b$}
+\grdun{$b$} \otimes \grdun{$a$},\\
\Delta(\grdtroisun{$a$}{$c$}{$b$})&=\grdtroisun{$a$}{$c$}{$b$}\otimes 1+1\otimes \grdtroisun{$a$}{$c$}{$b$}
+\grddeux{$a$}{$b$}\otimes \grdun{$c$}+\grddeux{$a$}{$c$}\otimes \grdun{$b$}+\grddeux{$b$}{$c$}\otimes \grdun{$a$}
+\grdun{$c$} \otimes \grddeux{$a$}{$b$}+\grdun{$b$} \otimes \grddeux{$a$}{$c$}+\grdun{$a$} \otimes \grddeux{$b$}{$c$},\\
\Delta(\grdtroisdeux{$a$}{$c$}{$b$})&=\grdtroisdeux{$a$}{$c$}{$b$}\otimes 1+1\otimes\grdtroisdeux{$a$}{$c$}{$b$}
+\grddeux{$a$}{$b$}\otimes \grdun{$c$}+\grddeux{$a$}{$c$}\otimes \grdun{$b$}
+\grdun{$b$}\grdun{$c$}\otimes \grdun{$a$}+\grdun{$b$}\otimes \grddeux{$a$}{$c$}
+\grdun{$c$}\otimes \grddeux{$a$}{$b$}+\grdun{$a$}\otimes \grdun{$b$}\grdun{$c$}.
\end{align*}\end{example}

In order to define the second coproduct, we need more structure on $\calD$: let us assume that $(\calD,+)$
is an abelian semigroup (that is to say, $+$ is a commutative, associative binary operation on $\calD$). 
If $G$ is a $\calD$-decorated graph and $\sim$ is an equivalence on $V(G)$. As $V(G\mid \sim)=V(G)$,
$G\mid \sim$ is a $\calD$-decorated graph, with $d_{G\mid \sim}=d_G$. W define $d_{G/\sim}$ by:
\begin{align*}
&\forall c\in V(G/\sim)=V(G)/\sim,&d_{G/\sim}(c)&=\sum_{x\in c} d_G(x).
\end{align*}
As $(\calD,+)$ is an abelian semigroup, this is well-defined, and in this way $G/\sim$ becomes a $\calD$-decorated graph.
The proof of Proposition \ref{prop4} can be extended to the $\calD$-decorated case;
with the notations of the proof of this theorem, if $\sim$, $\sim'\triangleleft G$ and $\sim'\leq \sim$, then, as decorated graphs:
\begin{align*}
(G/\sim)/\sim'&=(G/\sim'),&
(G/\sim)\mid \sim'&=(G|\sim')/\sim,&
G\mid \sim&=(G\mid \sim')\mid \sim.
\end{align*}
Hence, $\hgrd$ is a bialgebra, with the coproduct defined by:
\begin{align*}
&\forall G\in \gr(\calD),& \delta(G)&=\sum_{\sim\triangleleft G} (G/\sim) \otimes (G|\sim).
\end{align*}

\begin{example} If $a,b,c\in \calD$:
\begin{align*}
\delta(\grdun{$a$})&=\grdun{$a$}\otimes \grdun{$a$},\\
\delta(\grddeux{$a$}{$b$})&=\grdunbis{$a+b$}\otimes \grddeux{$a$}{$b$}
+\grddeux{$a$}{$b$}\otimes \grdun{$a$}\grdun{$b$},\\
\delta(\grdtroisun{$a$}{$c$}{$b$})&=\grdunter{$a+b+c$} \otimes \grdtroisun{$a$}{$c$}{$b$}
+\grddeuxbis{$a+b$}{$c$}\otimes \grdun{$c$}\grddeux{$a$}{$b$}
+\grddeuxbis{$a+c$}{$b$}\otimes \grdun{$b$}\grddeux{$a$}{$c$}
+\grddeuxbis{$b+c$}{$a$}\otimes \grdun{$a$}\grddeux{$b$}{$c$}
+\grdtroisun{$a$}{$c$}{$b$}\otimes\grdun{$a$}\grdun{$b$}\grdun{$c$},\\
\delta(\grdtroisdeux{$a$}{$c$}{$b$})&=\grdunter{$a+b+c$}\otimes \grdtroisdeux{$a$}{$c$}{$b$}
+\grddeuxbis{$a+c$}{$b$}\otimes \grdun{$b$}\grddeux{$a$}{$c$}
+\grddeuxbis{$a+b$}{$c$}\otimes \grdun{$c$}\grddeux{$a$}{$b$}
+\grdtroisdeux{$a$}{$c$}{$b$}\otimes \grdun{$a$}\grdun{$b$}\grdun{$c$}.
\end{align*}\end{example}

\begin{theo} \label{theo7bis}
With the coaction $\delta$, $(\hgrd,m,\Delta)$ and $(\hgrd,m,\delta)$ are in cointeraction.
Moreover, let us consider the forgetful map:
\[\calF^{(\calD)}:\left\{\begin{array}{rcl}
\hgrd&\longrightarrow&\hgr\\
(G,d_G)\in \gr(\calD)&\longrightarrow&G\in \gr,
\end{array}\right.\]
then $\calF^{(\calD)}$ is a surjective Hopf algebra morphism from $(\hgrd,m,\Delta)$ to $(\hgr,m,\Delta)$ 
and a bialgebra morphism from  compatible $(\hgrd,m,\delta)$ to $(\hgr,m,\delta)$.
\end{theo}

\begin{remark}
When $\calD$ is a singleton, $\calF^{(\calD)}$ is an isomorphism.
Through this isomorphism, we identify $\hgr$ with $\hgrd$, when $\calD=\{*\}$ is a singleton,
given its unique semigroup structure $*+*=*$. 
\end{remark}

Let us now give $\hgrd$ a graduation. A graded set is a pair $(\calD,\wt)$, where $\wt:\calD\longrightarrow \N_{>0}$ is a map.
Given such a map, we put, for any $\calD$-decorated graph $G$:
\[\wt(G)=\sum_{x\in V(G)} \wt(x).\]
For any $n \geqslant 0$, let $(\hgrd)_n$ be the subspace of $\hgrd$ generated by the $\calD$-decorated
graphs $G$ with $\wt(G)=n$. Then:

\begin{prop}\label{propgrad}
If $(\calD,\wt)$ is a graded set, the map $\wt$ induces a connected graduation of the Hopf algebra $(\hgrd,m,\Delta)$.
\end{prop}

\begin{remark}
The nondecorated case $\hgr$ is obtained with the weight defined by $\wt(*)=1$. 
\end{remark}

\section{Chromatic polynomials}

\label{s2}

In all this section, we fix an abelian semigroup $(\calD,+)$ and work in $\hgrd$. This situation includes the nondecorated case,
when $\calD=\{*\}$.  

\subsection{Consequence of the cointeraction}

We can apply the results of \cite{FoissyEhrhart}:

\begin{theo}\label{theo8}
We denote by $\mgrd$ the monoid of characters of $\hgrd$. In the nondecorated case, we shall simply write $\mgr$. 
\begin{enumerate}
\item Let $\lambda \in \mgrd$. It is an invertible element if, and only if, for any $d\in \calD$, $\lambda(\grdun{$d$})\neq 0$.
\item Let $B$ be a Hopf algebra, and $E_{\hgrd\rightarrow B}$ 
be the set of Hopf algebra morphisms from $(\hgrd,m,\Delta)$ to $B$. 
Then $\mgrd$ acts on $E_{\hgrd\rightarrow B}$ by:
\[\leftarrow:\left\{\begin{array}{rcl}
E_{\hgrd\rightarrow B}\times \mgrd&\longrightarrow&E_{\hgrd\rightarrow B}\\
(\phi,\lambda)&\longrightarrow&\phi\leftarrow \lambda=(\phi \otimes \lambda)\circ \delta.
\end{array}\right.\]
\item Let $\lambda \in \mgrd$. There exists a unique element $\phi\in E_{\hgrd\rightarrow\Q[X]}$ such that:
\begin{align*}
&\forall x\in \hgrd,&\phi(x)(1)&=\lambda(x).
\end{align*}
\item There exists a unique morphism $\phi_1^{(\calD)}:\hgrd\longrightarrow \Q[X]$, such that:
\begin{itemize}
\item $\phi_1^{(\calD)}$ is a Hopf algebra morphism from $(\hgrd,m,\Delta)$ to $(\Q[X],m,\Delta)$.
\item $\phi_1^{(\calD)}$ is a bialgebra morphism from $(\hgrd,m,\delta)$ to $(\Q[X],m,\delta)$.
\end{itemize}
This morphism is the unique element $\phi\in E_{\hgrd\rightarrow\Q[X]}$ such that:
\begin{align*}
&\forall x\in \hgrd,&\phi(x)(1)&=\varepsilon'(x).
\end{align*}
In the nondecorated case, we shall simply write $\phi_1$. 
\item The following map is a bijection:
\[\left\{\begin{array}{rcl}
\mgrd&\longrightarrow&E_{\hgrd\rightarrow \Q[X]}\\
\lambda&\longrightarrow&\phi_1^{(\calD)}\leftarrow \lambda.
\end{array}\right.\]
\end{enumerate}\end{theo}

We shall determine $\phi_1^{(\calD)}$ in the next section.

\subsection{A first morphism}

\begin{prop} \label{prop9}
We define $\phi_0^{(\calD)}:\hgrd\longrightarrow \Q[X]$ by:
\begin{align*}
&\forall G\in \gr(\calD),&\phi_0^{(\calD)}(G)&=X^{|V(G)|}.
\end{align*}
Then $\phi_0^{(\calD)}$ is a Hopf algebra morphism from $(\hgrd,m,\Delta)$ to $(\Q[X],m,\Delta)$.
In the nondecorated case, we shall simply write $\phi_0$. 
\end{prop}

\begin{proof} This map is obviously an algebra morphism. 
For any graph $G$, of degree $n$:
\begin{align*}
(\phi_0^{(\calD)}\otimes \phi_0^{(\calD)})\circ \Delta(G)
&=\sum_{V(G)=I\sqcup J} X^{|I|}\otimes X^{|J|}=\sum_{i=0}^n \binom{n}{i} X^i \otimes X^{n-i}
=\Delta(X^n)=\Delta\circ \phi_0^{(\calD)}(G).
\end{align*}
So $\phi_0^{(\calD)}$ is a Hopf algebra morphism. \end{proof} 

\begin{remark}
This morphism $\phi_0^{(\calD)}$ is not compatible with $\delta$. For example, in the nondecorated case:
\begin{align*}
\delta \circ \phi_0(\grdeux)&=\delta(X)^2\\
&=X^2\otimes X^2,\\
(\phi_0\otimes \phi_0)\circ \delta(\grdeux)&=(\phi_0\otimes \phi_0)(\grdeux \otimes \grun \grun+\grun \otimes \grdeux)\\
&=X^2\otimes X^2+X\otimes X^2.
\end{align*}
\end{remark}

\subsection{Determination of $\phi_1^{(\calD)}$}

Let us recall the definition of the chromatic polynomial, due to Birkhoff and Lewis \cite{BL}:

\begin{defi} 
Let $G$ be a graph and $X$ a set. 
\begin{enumerate}
\item A $X$-\emph{coloring} of $G$ is a map $f:V(G)\longrightarrow X$.
\item A $\N$-coloring of $G$ is \emph{packed} if $f(V(G))=[k]$, with $k\geq 0$. The set of packed colorings of $G$ is denoted by $\PC(G)$.
\item A \emph{valid} $X$-coloring of $G$ by $X$ is a $X$-coloring $f$ such that if $\{i,j\} \in E(G)$, then $f(i)\neq f(j)$. 
The set of valid $X$-colorings of $G$ is denoted by $\VC(G,X)$; the set of packed valid colorings of $G$ is denoted by $\PVC(G)$.
\item An \emph{independent subset} of $G$ is a subset $I$ of $V(G)$ such that $G_{\mid I}$ is totally disconnected. 
We denote by $\IP(G)$ the set of partitions $\{I_1,\ldots,I_k\}$ of $V(G)$ such that for all $p\in [k]$, $I_p$ is an independent subset of $G$.
\item For any $k\geq 1$, the number of valid $[k]$-colorings of $G$ is denoted by $P_{chr}(G)(k)$. 
This defines a unique polynomial $P_{chr}(G) \in \Q[X]$, called the \emph{chromatic polynomial} of $G$.
\end{enumerate} \end{defi}

Note that if $f$ is a $X$-coloring of a graph $G$, it is valid if, and only if, the partition of $V(G)$ $\{f^{-1}(x)\mid x\in f(V(G))\}$ 
belongs to $\IP(G)$.

\begin{theo} \label{theo11}
\begin{enumerate}
\item The morphism $P_{chr}:\hgr\longrightarrow\Q[X]$ is the morphism $\phi_1$ of Theorem \ref{theo8}.
\item The unique morphism $\phi_1^{(\calD)}$ of Theorem \ref{theo8} is $P_{chr}^{(\calD)}=P_{chr}\circ \calF^{(\calD)}$. 
\end{enumerate}\end{theo}

\begin{proof} 1. It is immediate that, for any graphs $G$ and $H$, $P_{chr}(GH)(k)=P_{chr}(G)(k)P_{chr}(H)(k)$ for any $k$,
so $P_{chr}(GH)=P_{chr}(G)P_{chr}(H)$: $P_{chr}$ is an algebra morphism. Let $G$ be a graph, and $k,l\geq 1$.
We consider the two sets:
\begin{align*}
C&=\VC(G,[k+l]),\\
D&=\{(I,c',c'')\mid I\subseteq V(G), \: c'\in \VC(G_{\mid I},[k]),\: c''\in \VC(G_{\mid V(G)\setminus I},[l])\}.
\end{align*}
We define a map $\theta:C\longrightarrow D$ by $\theta(c)=(I,c',c'')$, with:
\begin{itemize}
\item $I=\{x\in V(G)\mid c(x)\in [k]\}$.
\item For all $x\in I$, $c'(x)=c(x)$.
\item For all $x\notin I$, $c''(x)=c(x)-k$.
\end{itemize}
We define a map $\theta':D\longrightarrow C$ by $\theta(I,c',c'')=c$, with:
\begin{itemize}
\item For all $x\in I$, $c(x)=c'(x)$.
\item For all $x\notin I$, $c(x)=c''(x)+k$.
\end{itemize}
Both $\theta$ and $\theta'$ are well-defined; moreover, $\theta \circ \theta'=\id_D$ and $\theta'\circ \theta=\id_C$,
so $\theta$ is a bijection. Via the identification of $\Q[X]\otimes \Q[X]$ and $\Q[X,Y]$:
\begin{align*}
\Delta \circ P_{chr}(G)(k,l)&=P_{chr}(G)(k+l)\\
&=\sharp C\\
&=\sharp D\\
&=\sum_{I\subseteq V(G)}P_{chr}(G_{\mid I})(k) P_{chr}(G_{\mid V(G)\setminus I})(l)\\
&=(P_{chr}\otimes P_{chr})\left(\sum_{V(G)=I\sqcup J} G_{\mid I} \otimes G_{\mid J}\right)(k,l)\\
&=(P_{chr}\otimes P_{chr})\circ \Delta(G)(k,l).
\end{align*}
As this is true for all $k,l\geq 1$, $\Delta \circ P_{chr}(G)=(P_{chr}\otimes P_{chr})\circ \Delta(G)$.
Moreover:
\[\varepsilon(G)=\varepsilon \circ P_{chr}(G)=P_{chr}(G)(0)=\begin{cases}
1\mbox{ if $G$ is empty},\\
0\mbox{ otherwise}.
\end{cases}\]
So $P_{chr}\in E_{\hgr\rightarrow \Q[X]}$. For any graph $G\in \gr$:
\begin{align*}
P_{chr}(G)(1)&=\begin{cases}
1\mbox{ if $G$ is totally disconnected},\\
0\mbox{ otherwise};
\end{cases}\\
&=\varepsilon'(G).
\end{align*}
So $\phi_1=P_{chr}$. \\

2. By composition, $P_{chr}\circ \calF^{(\calD)}$ satisfies the two required conditions. \end{proof}

\subsection{The chromatic character}

\begin{cor} \label{cor12}
For any connected graph $G\in \gr$, we put:
\[\lambda_{chr}(G)=\frac{dP_{chr}(G)}{dX}(0).\]
We extend $\lambda$ as an element of $\mgr$: for any graph $G$, if $G_1,\ldots,G_k$ are the connected components of $G$,
\[\lambda_{chr}(G)=\lambda_{chr}(G_1)\ldots \lambda_{chr}(G_k).\]
Then $\lambda_{chr}$ is an invertible element of $\mgr$, and we denote its inverse by $\lambda_0$.
Then, for any  graph $G$, $\lambda_0(G)=1$, or, equivalently:
\begin{align*}
&\forall G\in \gr,&\sum_{\sim\triangleleft G}\lambda_{chr}(G/\sim)&=
\sum_{\sim\triangleleft G}\lambda_{chr}(G\mid\sim)=\varepsilon'(G).
\end{align*}
Moreover, $P_{chr}=\phi_0\leftarrow \lambda_{chr}$, or equivalently:
\begin{align*}
&\forall G\in \gr,&P_{chr}(G)&=\sum_{\sim \triangleleft G} \lambda_{chr}(G|\sim)X^{cl(\sim)}.
\end{align*}
\end{cor}

\begin{proof} 
By Theorem \ref{theo8}, there exists a unique $\lambda\in \mgr$, such that $\phi_0=\phi_1\leftarrow \lambda$.
Then:
\begin{align*}
\varepsilon' \circ \phi_0&=\varepsilon'\circ (\phi_1\otimes \lambda)\circ \delta
=((\varepsilon' \circ\phi_1)\otimes  \lambda)\circ \delta
=(\varepsilon' \otimes \lambda)\circ \delta
=\varepsilon' * \lambda
=\lambda.
\end{align*}
Therefore, for any graph $G$, $\lambda(G)=\varepsilon'(X^{|V(G)|})=1$. 
As $\lambda(\grun)=1$, by Theorem \ref{theo8}, $\lambda$ is invertible, and then $\phi_1=\phi_0\leftarrow \lambda^{*-1}$.
For any graph $G$, by definition of $\delta$:
\[\phi_1(G)=\sum_{\sim \triangleleft G} \lambda^{*-1}(G|\sim)X^{cl(\sim)}.\]
If $G$ is connected, there exists a unique $\sim_1\triangleleft G$ such that $cl(\sim_1)=1$: 
this is the equivalence relation such that for any $x,y\in V(G)$,
$x\sim_1 y$. Moreover, $G\mid \sim_1=G$. 
Hence, the coefficient of $X$ in $P_{chr}(X)$ is $\lambda^{*-1}(G\mid \sim_1)+0=\lambda^{*-1}(G)$, so:
\[\lambda^{*-1}(G)=\frac{dP_{chr}(G)}{dX}(0)=\lambda_{chr}(G).\]
Consequently, $\lambda_{chr}^{-1}=\lambda$. \end{proof}

The character $\lambda_{chr}$ will be called the \emph{chromatic character}. Its inverse is denoted by $\lambda_0$. 
We extend it to any $\hgrd$  by $\lambda_{chr}^{(\calD)}=\lambda_{chr}\circ \calF^{(\calD)}$. 
Then its inverse is $\lambda_0^{(\calD)}==\lambda_0\circ \calF^{(\calD)}$. 
Then, as $\calF^{(\calD)}$ is compatible with both bialgebraic structures on $\hgrd$:
\begin{align*}
\phi_1^{(\calD)}&=\phi_0^{(\calD)}\leftarrow \lambda_{chr}^{(\calD)}.
\end{align*}

\begin{prop} \label{prop13}
$\lambda_{chr}(\grun)=1$; if $G$ is a connected graph, $G\neq \grun$, then:
\[\lambda_{chr}(G)=\sum_{\calf\in \F(G)}(-1)^{\sharp\calf}.\]
\end{prop}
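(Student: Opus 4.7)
The plan is to use that both $\lambda_0$ and $\lambda_{chr}$ descend to the Hopf quotient $\hgr'$, on which $\lambda_{chr}$ is simply $\lambda_0 \circ S'$, and then read off the formula from the antipode formula of the preceding theorem.

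The case $G = \grun$ will be immediate: $\delta(\grun) = \grun \otimes \grun$ reduces $\lambda_{chr} * \lambda_0 = \varepsilon'$ evaluated at $\grun$ to $\lambda_{chr}(\grun)\lambda_0(\grun) = \varepsilon'(\grun) = 1$, and since $\lambda_0(\grun) = 1$ by Proposition \ref{prop9}, we obtain $\lambda_{chr}(\grun) = 1$. For $G$ connected with $G \neq \grun$, I would next observe that both $\lambda_0$ and $\lambda_{chr}$ take the value $1$ at the group-like element $\grun$, hence factor through the quotient $\hgr' = \hgr/\langle \grun - 1\rangle$ used to define $S'$. Since the quotient map $\hgr \to \hgr'$ is a bialgebra morphism for $\delta$, convolution of such characters is preserved, and in the genuine Hopf algebra $\hgr'$ inversion of characters is given by precomposition with the antipode. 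Thus $\lambda_{chr} = \lambda_0^{*-1} = \lambda_0 \circ S'$, and inserting the antipode formula yields
$$\lambda_{chr}(G) \;=\; \lambda_0(S'(G)) \;=\; \sum_{\calf \in \F(G)} (-1)^{\sharp\calf}\, \lambda_0(G_\calf) \;=\; \sum_{\calf \in \F(G)} (-1)^{\sharp\calf},$$
the last equality being $\lambda_0 \equiv 1$ (Proposition \ref{prop9}) applied to each connected factor of the product $G_\calf$.

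The one delicate point is the interchange between convolution in the character monoid $\mgr$ of the bialgebra $(\hgr,m,\delta)$ and in the character group of the Hopf algebra $\hgr'$, for characters trivial on $\grun - 1$. This is a routine compatibility check already implicit in the construction of $S'$, and once accepted the entire proof reduces to substitution into the antipode formula together with the constancy of $\lambda_0$. Alternatively, one could bypass the quotient and argue by induction on $|G|$ using the recursion $\lambda_{chr}(G) = -\sum_{\sim \triangleleft G,\;\sim\neq\sim_0} \lambda_{chr}(G/\sim)$ obtained from $(\lambda_{chr}*\lambda_0)(G)=\varepsilon'(G)=0$, but in that variant one must rework the forest side entirely in terms of contractions, which is considerably less transparent than the antipode route sketched above.
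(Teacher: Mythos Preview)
Your proof is correct and follows essentially the same route as the paper: both descend $\lambda_{chr}$ and $\lambda_0$ to the Hopf quotient $\hgr'$, use that convolution inverse there is given by precomposition with $S'$, and then plug in the antipode formula together with $\lambda_0\equiv 1$. The paper writes the key identity as $\lambda_{chr}=\lambda_{chr}^{*-1}\circ S'$ rather than $\lambda_0\circ S'$, but since $\lambda_{chr}^{*-1}=\lambda_0$ this is only a cosmetic difference; your explicit discussion of the compatibility of convolution under the quotient map is a point the paper leaves implicit.
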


\begin{proof} We have $\lambda_{chr}(\grun)=\lambda_0(\grun)=1$, so both $\lambda_{chr}$ and 
$\lambda_0$ can be seen as characters on $\hgr'$. Hence, for any connected graph $G$, different from $\grun$:
\begin{align*}
\lambda_{chr}(G)&=\lambda_0\circ S'(G)=\sum_{\calf\in \F(G)} (-1)^{\sharp \calf}\lambda_0(G_\calf)
=\sum_{\calf\in \F(G)} (-1)^{\sharp \calf},
\end{align*}
as $\lambda_0(H)=1$ for any graph $H\in \gr$. \end{proof}

\begin{example}\label{ex2.1}
\begin{enumerate}
\item By direct computations, we obtain:
\[\begin{array}{c|c|c|c|c|c|c|c|c|c|c}
G&\grun&\grdeux&\grtroisun&\grtroisdeux&\grquatreun&\grquatredeux&\grquatretrois&\grquatrequatre&\grquatrecinq&\grquatresix\\
\hline \lambda_{chr}(G)&1&-1&2&1&-6&-4&-2&-3&-1&-1
\end{array}\]
\item If $G$ is a complete graph with $n$ vertices, $P_{chr}(G)(X)=X(X-1)\ldots (X-n+1)$, so $\lambda_{chr}(G)=(-1)^{n-1} (n-1)!$.
\item If $G$ is a tree with $n$ vertices, $P_{chr}(G)(X)=X(X-1)^{n-1}$, so $\lambda_{chr}(G)=(-1)^{n-1}$. 
\end{enumerate}\end{example}

\subsection{Extraction and contraction of edges}

\begin{defi}
Let $G$ be a graph and $e\in E(G)$.
\begin{enumerate}
\item (Contraction of $e$). The graph $G/e$ is $G/\sim_e$, where $\sim_e$ is the equivalence which classes are $e$ and singletons.
\item (Subtraction of $e$). The graph $G\setminus e$ is the graph $(V(G),E(G)\setminus\{e\})$.
\item We shall say that $e$ is a \emph{bridge} (or an \emph{isthmus}) of $G$ if $cc(G\setminus e)>cc(G)$.
\end{enumerate}\end{defi}

We now give an algebraic proof of the following well-known result \cite{Harary}, which allows to compute the chromatic
polynomial by induction on the number of edges:

\begin{prop} \label{prop15}
For any graph $G$, for any edge $e$ of $G$:
\begin{align*}
P_{chr}(G)&=P_{chr}(G\setminus e)-P_{chr}(G/e);&
\lambda_{chr}(G)&=\begin{cases}
-\lambda_{chr}(G/e)\mbox{ if $e$ is a bridge},\\
\lambda_{chr}(G\setminus e)-\lambda_{chr}(G/e)\mbox{ otherwise}.
\end{cases}\end{align*}\end{prop}

\begin{proof} Let $G$ be a graph, and $e\in E(G)$. Let us prove that for all $k\geq 1$, $P_{chr}(G)(k)=P_{chr}(G\setminus e)(k)-P_{chr}(G/e)(k)$.
We proceed by induction on $k$. If $k=1$, $P_{chr}(G)(1)=\varepsilon'(G)=0$.
If $G$ has only one edge, then $G\setminus e$ and $G/e$ are totally disconnected, and:
\[P_{chr}(G\setminus e)(1)-P_{chr}(G/e)(1)=1-1=0.\]
Otherwise, $G\setminus e$ and $G/e$ have edges, and:
\[P_{chr}(G\setminus e)(1)-P_{chr}(G/e)(1)=0-0=0.\]
Let us assume the result at rank $k$. Putting $e=\{x,y\}$:
\begin{align*}
&P_{chr}(G\setminus e)(k+1)-P_{chr}(G/e)(k+1)\\
&=\sum_{V(G)=I\sqcup J}P_{chr}((G\setminus e)_{\mid I})(k)P_{chr}((G\setminus e)_{\mid J})(1)\\
&-\sum_{\substack{V(G)=I\sqcup J,\\x,y\in I}} P_{chr}((G/e)_{\mid I})(k)P_{chr}((G/e)_{\mid J}(1)
-\sum_{\substack{V(G)=I\sqcup J,\\x,y\in J}} P_{chr}((G/e)_{\mid I})(k)P_{chr}((G/e)_{\mid J}(1)\\
&=\sum_{\substack{V(G)=I\sqcup J,\\x,y\in I}}P_{chr}((G\setminus e)_{\mid I})(k)P_{chr}((G\setminus e)_{\mid J})(1)+
\sum_{\substack{V(G)=I\sqcup J,\\x,y\in J}}P_{chr}((G\setminus e)_{\mid I})(k)P_{chr}((G\setminus e)_{\mid J})(1)\\
&-\sum_{\substack{V(G)=I\sqcup J,\\x,y\in I}} P_{chr}((G/e)_{\mid I})(k)P_{chr}((G/e)_{\mid J})(1)
-\sum_{\substack{V(G)=I\sqcup J,\\x,y\in J}} P_{chr}((G/e)_{\mid I})(k)P_{chr}((G/e)_{\mid J})(1)\\
&+\sum_{\substack{V(G)=I\sqcup J,\\(x,y)\in (I\times J)\cup (J\times I)}}P_{chr}((G\setminus e)_{\mid I})(k)P_{chr}((G\setminus e)_{\mid J})(1)\\
&=\sum_{\substack{V(G)=I\sqcup J,\\x,y\in I}}P_{chr}((G_{\mid I})\setminus e)(k)P_{chr}(G_{\mid J})(1)+
\sum_{\substack{V(G)=I\sqcup J,\\x,y\in J}}P_{chr}(G_{\mid I})(k)P_{chr}((G_{\mid J})\setminus e)(1)\\
&-\sum_{\substack{V(G)=I\sqcup J,\\x,y\in I}} P_{chr}((G_{\mid I})/e)(k)P_{chr}(G_{\mid J})(1)
-\sum_{\substack{V(G)=I\sqcup J,\\x,y\in J}} P_{chr}(G_{\mid I})(k)P_{chr}((G_{\mid J})/e)(1)\\
&+\sum_{\substack{V(G)=I\sqcup J,\\(x,y)\in (I\times J)\cup (J\times I)}}P_{chr}(G_{\mid I})(k)P_{chr}(G_{\mid J})(1)\\
&=\sum_{\substack{V(G)=I\sqcup J,\\x,y\in I}}P_{chr}(G_{\mid I})(k)P_{chr}(G_{\mid J})(1)+
\sum_{\substack{V(G)=I\sqcup J,\\x,y\in J}}P_{chr}(G_{\mid I})(k)P_{chr}(G_{\mid J})(1)\\
&+\sum_{\substack{V(G)=I\sqcup J,\\(x,y)\in (I\times J)\cup (J\times I)}}P_{chr}(G_{\mid I})(k)P_{chr}(G_{\mid J})(1)\\
&=\sum_{V(G)=I\sqcup J}P_{chr}(G_{\mid I})(k)P_{chr}(G_{\mid J})(1)\\
&=P_{chr}(G)(k+1).
\end{align*}
So the result holds for all $k\geq 1$. Hence, $P_{chr}(G)=P_{chr}(G\setminus e)-P_{chr}(G/e)$. \\

Let us assume that $G$ is connected. Note that $G/e$ is connected. 
If $e$ is a bridge, then $G\setminus e$ is not connected; each of its connected components
belongs to the augmentation ideal of $\hgr$, so their images belong to the augmentation ideal of $\Q[X]$, that is to say $X\Q[X]$;
hence, $P_{chr}(G\setminus e)\in X^2\Q[X]$, so:
\begin{align*}
\lambda_{chr}(G)&=\frac{dP_{chr}(G)}{dX}(0)
=\frac{dP_{chr}(G\setminus e)}{dX}(0)- \frac{dP_{chr}(G/e)}{dX}(0)=0-\lambda_{chr}(G/e).
\end{align*}
Otherwise, $G\setminus e$ is connected, and:
\begin{align*}
\lambda_{chr}(G)&=\frac{dP_{chr}(G)}{dX}(0)
=\frac{dP_{chr}(G\setminus e)}{dX}(0)- \frac{dP_{chr}(G/e)}{dX}(0)=\lambda_{chr}(G\setminus e)-\lambda_{chr}(G/e).
\end{align*}

If $G$ is not connected, we can write $G=G_1G_2$, where $G_1$ is connected and $e$ is an edge of $G_1$. Then:
\begin{align*}
\lambda_{chr}(G)&=\lambda_{chr}(G_1)\lambda_{chr}(G_2)\\
&=\begin{cases}
-\lambda_{chr}(G_1/e)\lambda_{chr}(G_2)\mbox{ if $e$ is a bridge},\\
\lambda_{chr}(G_1\setminus e)\lambda_{chr}(G_2)-\lambda_{chr}(G_1/e)\lambda_{chr}(G_2)\mbox{ otherwise};
\end{cases}\\
&=\begin{cases}
-\lambda_{chr}((G_1/e)G_2)\mbox{ if $e$ is a bridge},\\
\lambda_{chr}((G_1\setminus e)G_2)-\lambda_{chr}((G_1/e)G_2)\mbox{ otherwise};
\end{cases}\\
&=\begin{cases}
-\lambda_{chr}(G/e)\mbox{ if $e$ is a bridge},\\
\lambda_{chr}(G\setminus e)-\lambda_{chr}(G/e)\mbox{ otherwise}.
\end{cases}
\end{align*}
So the result holds for any graph $G$.  \end{proof}

\begin{example}
For any $n\geqslant 3$, let us denote by $C_n$ the cyclic graph with $n$ vertices.
Then $\lambda_{chr}(C_3)=2$. Choosing any edge $e$ of $C_n$ with $n\geqslant 4$, 
$C_n/e=C_{n-1}$ and $C_n\setminus e$ is a chain on $n$ vertices, so is a tree. Hence:
\[\lambda_{chr}(C_n)=(-1)^{n-1}-\lambda_{chr}(C_{n-1}).\]
A direct induction proves that for any $n\geqslant 3$, $\lambda_{chr}(C_n)=(-1)^{n-1}(n-1)$. 
\end{example}

\subsection{Lattices attached to graphs}

We here make the link with Rota's methods for proving the alternation of signs in the coefficients of chromatic polynomials.\\

The following order is used to prove Proposition \ref{prop4}:

\begin{prop} \label{prop16}
Let $G$ be a graph. We denote by $\rel(G)$ the set of equivalences $\sim$ on $V(G)$, such that $\sim\triangleleft G$.
Then $\rel(G)$ is partially ordered by refinement: 
\[\forall \sim, \sim' \in\rel(G),\:\sim\leq \sim' \mbox{ if } (\forall x,y\in V(G),\: x\sim y \Longrightarrow x\sim' y).\]
In other words, $\sim \leq \sim'$ if the equivalence classes of $\sim'$ are disjoint unions of equivalence classes of $\sim$.
Then $(\rel(G),\leq)$ is a bounded graded lattice.
Its minimal element $\sim_0$ is the equality; its maximal element $\sim_1$ is the relation which equivalence classes are the 
connected components of $\rel(G)$.
\end{prop}

\begin{proof} Let $\sim,\sim'\in \rel(G)$. We define $\sim \wedge \sim'$ as the equivalence which classes are the connected components
of the subsets $Cl_\sim(x)\cap Cl_{\sim'}(y)$, $x,y\in V(G)$. By its very definition, $\sim \wedge \sim' \triangleleft G$,
and $\sim\wedge \sim'\leq \sim,\sim'$. If $\sim'' \leq \sim,\sim'\leq$ in $\rel(G)$, then the equivalence classes of $\sim$ and $\sim'$
are disjoint union of equivalence classes of $\sim''$, so their intersections also are; as the equivalence classes of $\sim''$ are connected,
the connected components of these intersections are also disjoint union of equivalence classes of $\sim''$. This means that 
$\sim'' \leq \sim\wedge \sim'$.

We define $\sim \vee \sim'$ as the relation defined on $V(G)$ in the following way:
for all $x,y\in V(G)$, $x\sim \vee \sim' y$ if there exists $x_1,x'_1,\ldots,x_k,x'_k \in V(G)$ such that:
\[x=x_1\sim x'_1 \sim' x_2\sim \ldots \sim' x_k \sim' x'_k=y.\]
It is not difficult to prove that $\sim\vee \sim'$ is an equivalence. 
Moreover, if $x\sim y$, then $x\sim \vee \sim' y$ ($x_1=x$, $x'_1=y$);
if $x\sim' y$, then $x\sim \vee \sim' y$ ($x_1=x'_1=x$, $x_2=x'_2=y$). 
Let $C$ be an equivalence class of $\sim \vee \sim'$, and let $x,y\in C$. With the preceding notations,
as the equivalence classes of $\sim$ and $\sim'$ are connected, for all $p\in [k]$, there exists a path from $x_p$ to $x'_p$,
formed of elements $\sim$-equivalent, hence $\sim\vee \sim'$-equivalent; for all $p\in [k-1]$, there exists a path from $x'_p$ to $x'_{p+1}$,
formed of elements $\sim'$-equivalent, hence $\sim\vee \sim'$-equivalent. Concatening these paths, we obtain a path from $x$ to $y$ in $C$,
which is connected. So $\sim \vee \sim' \in \rel(G)$, and $\sim,\sim' \leq \sim\vee \sim'$. 
Moreover, if $\sim,\sim' \leq \sim''$, then obviously $\sim \vee \sim'\leq \sim''$. We proved that $\rel(G)$ is a lattice.\\

For any $\sim \in \rel(G)$, we put $\deg(G)=|G|-cl(\sim)$. Note that $\deg(\sim_0)=0$. Let us assume that $\sim$ is covered by $\sim'$ in $\rel(G)$.
We denote by $C_1,\ldots, C_k$ the classes of $\sim$. As $\sim \leq \sim'$, the classes of $\sim'$ are disjoint unions of $C_p$;
as $\sim \neq \sim'$, one of them, denoted by $C'$, contains at least two $C_p$. As $C'$ is connected, there is an edge in $C'$
connecting two different $C_p$; up to a reindexation, we assume that there exists an edge from $C_1$ to $C_2$ in $C'$.
Then $C_1\sqcup C_2$ is connected, and the equivalence $\sim''$ which classes are $C_1\sqcup C_2,C_3,\ldots C_k$
satisfies $\sim \leq \sim'' \leq \sim'$. As $\sim'$ covers $\sim$, $\sim'=\sim''$, so $\deg(\sim')=|G|-k+1=\deg(\sim)+1$. \end{proof}

\begin{remark}
 This lattice is isomorphic to the one of \cite{Rota}. The isomorphism between them
sends a element $\sim \in \rel(G)$ to the partition formed by its equivalence classes.
\end{remark}

\begin{example}
 We represent $\sim \in \rel(G)$ by $G|\sim$. Here are examples of $\rel(G)$, represented by their Hasse graphs.
We index the vertices of the graphs by letters for a better understanding.
\begin{align*}
&\xymatrix{\grddeux{$a$}{$b$}\ar@{-}[d]\\ \grdun{$a$}\grdun{$b$} }&
&\xymatrix{&\grdtroisdeux{$a$}{$c$}{$b$} \ar@{-}[ld] \ar@{-}[rd]&\\
\grddeux{$a$}{$b$}\grdun{$c$}\ar@{-}[rd]&&\grddeux{$a$}{$c$}\grdun{$b$}\ar@{-}[ld]\\
&\grdun{$a$}\grdun{$b$}\grdun{$c$}&}&
&\xymatrix{&\grdtroisun{$a$}{$c$}{$b$} \ar@{-}[ld] \ar@{-}[d] \ar@{-}[rd]&\\
\grddeux{$b$}{$c$}\grdun{$a$}\ar@{-}[rd]&\grddeux{$a$}{$c$}\grdun{$b$} \ar@{-}[d]
&\grddeux{$a$}{$b$}\grdun{$c$}\ar@{-}[ld]\\
&\grdun{$a$}\grdun{$b$}\grdun{$c$}&} 
\end{align*}\end{example}

\begin{prop}
Let $G$ be a graph. We denote by $\mu_G$ the Möbius function of $\rel(G)$.
\begin{enumerate}
\item If $\sim \leq \sim'$ in $\rel(G)$, then the poset $[\sim,\sim']$ is isomorphic to $\rel((G|\sim')/\sim)$.
\item For any $\sim\leq \sim'$ in $\rel(G)$, $\mu_G(\sim,\sim')=\lambda_{chr}((G|\sim')/\sim)$.
In particular:
\[\mu_G(\sim_0,\sim_1)=\lambda_{chr}(G).\]
\end{enumerate} \end{prop}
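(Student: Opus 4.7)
The plan is to prove part 1 by constructing an explicit order-isomorphism, and then to deduce part 2 by Möbius inversion, using as the key input the convolution identity $\lambda_0*\lambda_{chr}=\varepsilon'$ coming from corollary \ref{cor12}.

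For part 1, I would define $\Phi:[\sim,\sim']\to \rel((G|\sim')/\sim)$ by sending $\sim''$ to the equivalence $\overline{\sim''}$ on $V(G)/\sim$ characterized by $\pi_\sim(x)\,\overline{\sim''}\,\pi_\sim(y)$ iff $x\sim'' y$. This is well-defined because $\sim\leq\sim''$, and its inverse is the obvious pullback by $\pi_\sim$. Order preservation in both directions is transparent, so the content is in checking the two relations $\overline{\sim''}\triangleleft(G|\sim')/\sim$ and $\sim\leq\tau\leq\sim'$ (for $\tau$ the lift of some $\overline{\tau}\in\rel((G|\sim')/\sim)$). For the first, each class of $\overline{\sim''}$ is the image $\pi_\sim(C')$ of a $\sim''$-class $C'$; since $\sim''\leq\sim'$, $C'$ sits inside one $\sim'$-block, so the edges of $G$ and of $G|\sim'$ within $C'$ coincide, and contracting the connected $\sim$-classes inside the connected subgraph $G_{\mid C'}$ preserves connectedness in $(G|\sim')/\sim$. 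For the second, the key observation is that the connected components of $(G|\sim')/\sim$ are exactly the images of the $\sim'$-classes under $\pi_\sim$; any $\overline{\tau}$-class lies in one such component, hence lifts to a subset of a single $\sim'$-class.

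For part 2, the crucial byproduct of part 1 is that $\Phi$ sends $\sim$ to $\sim_0$ and $\sim'$ to $\sim_1$ of $\rel((G|\sim')/\sim)$, so computing the Möbius function inside $[\sim,\sim']$ amounts to computing $\mu_H(\sim_0,\sim_1)$ for $H:=(G|\sim')/\sim$. It thus suffices to show that $\mu_H(\sim_0,\tau)=\lambda_{chr}(H|\tau)$ for every graph $H$ and every $\tau\in\rel(H)$, whose specialization $\tau=\sim_1$ together with $H|\sim_1=H$ gives the statement. By the uniqueness of the Möbius function, this reduces to checking
\begin{equation*}
\sum_{\tau'\in[\sim_0,\tau]} \lambda_{chr}(H|\tau') \;=\; \delta_{\sim_0,\tau}
\end{equation*}
for every $\tau\in\rel(H)$. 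Applying part 1 with the base equivalence being the identity identifies $[\sim_0,\tau]$ with $\rel(H|\tau)$; moreover $H|\tau' = (H|\tau)|\tau'$ when $\tau'\leq\tau$, since the constraint $x\tau' y\Rightarrow x\tau y$ makes the two restrictions coincide. The sum therefore equals $\sum_{\tau'\in\rel(H|\tau)}\lambda_{chr}((H|\tau)|\tau')$, which by the convolution identity $\lambda_0*\lambda_{chr}=\varepsilon'$ applied to $H|\tau$ equals $\varepsilon'(H|\tau)$. Finally $\varepsilon'(H|\tau)=1$ iff $H|\tau$ is totally disconnected iff every $\tau$-class (necessarily connected) is a singleton iff $\tau=\sim_0$, which is exactly $\delta_{\sim_0,\tau}$.

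The main obstacle is the combinatorial bookkeeping in part 1, namely verifying that the two operations of extracting (by $\sim'$) and contracting (by $\sim$) are compatible in the right way: connectedness of a class of $\sim''$ in $G$ must translate into connectedness of its image inside the mixed graph $(G|\sim')/\sim$, and conversely a connected set in $(G|\sim')/\sim$ must lift to a set sitting within a single $\sim'$-class. Once part 1 is established, part 2 is essentially formal, with the convolution identity from corollary \ref{cor12} doing the main work via Möbius inversion.
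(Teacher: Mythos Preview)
Your proposal is correct and follows essentially the same route as the paper: the same explicit map $\sim''\mapsto\overline{\sim''}$ for part~1, and for part~2 the same Möbius-inversion argument resting on the identity $\sum_{\tau\triangleleft K}\lambda_{chr}(K|\tau)=\varepsilon'(K)$ (which you invoke as $\lambda_0*\lambda_{chr}=\varepsilon'$ and the paper as $P_{chr}(K)(1)=\varepsilon'(K)$, both from corollary~\ref{cor12}). The only cosmetic difference is that you first reduce to $H=(G|\sim')/\sim$ and prove $\mu_H(\sim_0,\tau)=\lambda_{chr}(H|\tau)$, whereas the paper works directly in $[\sim,\sim']$ and uses the compatibility $(G|\sim'')/\sim=((G|\sim')/\sim)|\overline{\sim''}$; these are the same computation.
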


\begin{proof} Let $\sim \leq \sim' \in \rel(G)$. If $\sim''$ is an equivalence on $V(G)$, then $\sim \leq \sim'' \leq \sim$ if, and only if,
the following conditions are satisfied:
\begin{itemize}
\item $\sim''$ goes to the quotient $G/\sim$, as an equivalence denoted by $\overline{\sim''}$.
\item $\overline{\sim''} \in \rel((G|\sim')/\sim)$.
\end{itemize}
Hence, we obtain a map from $[\sim,\sim']$ to $\rel((G|\sim')/\sim)$, sending $\sim''$ to $\overline{\sim''}$. 
It is immediate that this is a lattice isomorphism.

Let $\sim \leq \sim'\in \rel(G)$. As $[\sim,\sim']$ is isomorphic to the lattice $\rel((G|\sim')/\sim)$:
\begin{align*}
\sum_{\sim\leq \sim'' \leq \sim'} \lambda_{chr}((G|\sim'')/\sim)
&=\sum_{\overline{\sim''}\in \rel((G|\sim')/\sim)} \lambda_{chr}(((G/\sim')/\sim)|\overline{\sim''})\\
&=P_{chr}((G|\sim')/\sim)(1)\\
&=\begin{cases}
1\mbox{ if $(G|\sim')/\sim$ is totally disconnected},\\
0\mbox{ otherwise};
\end{cases}\\
&=\begin{cases}
1\mbox{ if $\sim=\sim'$},\\
0\mbox{ otherwise}.
\end{cases}
\end{align*}
Hence, $\mu_G(\sim,\sim')=\lambda_{chr}((G|\sim')/\sim)$. \end{proof}

\begin{remark}
 We now use the notion of incidence algebra of a family of posets exposed in \cite{Schmitt}.
We consider the family of posets:
\[\{[\sim,\sim'] \mid G\in \gr,\sim\leq \sim' \mbox{ in }\rel(G)\}.\]
It is obviously interval-closed. We define an equivalence relation on this family as the one generated by
$[\sim,\sim'] \equiv \rel((G|\sim')/\sim)$. The incidence bialgebra associated to this family is $(\hgr,m,\delta)$.
\end{remark}

\begin{prop} \label{prop18}
Let $G$ a graph. 
\begin{enumerate}
\item Let $G_1,\ldots,G_k$ be the connected components of $G$. Then $\rel(G)\approx \rel(G_1)\times \ldots \times \rel(G_k)$.
\item Let $e$ be a bridge of $G$. Then $\rel(G)\approx \rel(G/e)\times \rel(\grdeux)$.
\item We consider the following map:
\[\zeta_G:\left\{\begin{array}{rcl}
\rel(G)&\longrightarrow&\mathcal{P}(E(G))\\
\sim&\longrightarrow&E(G|\sim).
\end{array}\right.\]
This map is injective; for any $\sim$, $\sim'\in \rel(G)$, $\sim \leq \sim'$ if, and only if, $\zeta_G(\sim)\subseteq \zeta_G(\sim')$.
Moreover, $\zeta_G$ is bijective if, and only if, $G$ is a forest -- that is to say a graph such that any edge is a bridge.
\end{enumerate} \end{prop}

\begin{proof} 1. If $G,H$ are graphs and $\sim$ is an equivalence on $V(GH)$, then $\sim \triangleleft GH$ if, and only if:
\begin{itemize}
\item $\sim_{\mid V(G)} \triangleleft G$.
\item $\sim_{\mid V(H)} \triangleleft H$.
\item For any $x,y\in V(G)\sqcup V(H)$, $(x\sim y)\Longrightarrow ((x,y)\in V(G)^2\sqcup V(H)^2$.
\end{itemize}
Hence, the map sending $\sim$ to $(\sim_{\mid V(G)},\sim_{\mid V(H)})$ from $\rel(GH)$ to $\rel(G)\times \rel(H)$ is an isomorphism;
the first point follows.\\

2. Note that $\rel(\grdeux)=\{\grun\grun,\grdeux\}$, with $\grun\grun\leq \grdeux$.
By the first point, it is enough to prove it if $G$ is connected. Let us put $e=\{x',x''\}$, $G'$, respectively $G''$,
the connected components of $G\setminus e$ containing $x'$, respectively $x''$. We define a map $\psi:\rel(G/e)\times \rel(\grdeux)$
to $\rel(G)$ in the following way: if $\overline{\sim} \triangleleft \rel(G/e)$,
\begin{itemize}
\item $\psi(\overline{\sim},\grdeux)=\sim$, defined by $x\sim y$ if $\overline{x} \overline{\sim} \overline{y}$.
This is clearly an equivalence; moreover, $x'\sim x''$. if $x\sim y$, there exists a path from $\overline{x}$ to $\overline{y}$ in $G/e$,
formed by vertices $\overline{\sim}$-equivalent to $\overline{x}$ and $\overline{y}$. Adding edges $e$ if needed in this path,
we obtain a path from $x$ to $y$ in $G$, formed by vertices $\sim$-equivalent to $x$ and $y$; hence, $\sim \triangleleft G$.
\item $\psi(\overline{\sim},\grun\grun)=\sim$, defined by $x\sim y$ if $\overline{x}\overline{\sim} \overline{y}$ 
and $(x,y)\in V(G')^2\sqcup V(G'')^2$. This is clearly an equivalence; moreover, we do not have $x'\sim x''$. If $x\sim y$, let us assume for example
that both of them belong to $G'$. There is a path in $G\setminus e$ from $\overline{x}$ to $\overline{y}$, formed by vertices
formed by vertices $\overline{\sim}$-equivalent to $\overline{x}$ and $\overline{y}$. We choose such a path of minimal length.
If this path contains vertices belonging to $G''$, as $e$ is a bridge of $G$, it has the form:
\[\overline{x}-\ldots -\overline{x'}-\ldots -\overline{x'}-\ldots-\overline{y}.\]
Hence, we can obtain a shorter path from $\overline{x}$ to $\overline{y}$: this is a contradiction. So all the vertices of this path belong
to $G'$; hence, they are all $\sim$-equivalent. Finally, $\sim \triangleleft G$.
\end{itemize}

Let us assume that $\psi(\overline{\sim},\grdeux)=\psi(\overline{\sim}',\grdeux)=\sim$. If $\overline{x}\overline{\sim} \overline{y}$,
then $x\sim y$, so $\overline{x}\overline{\sim} \overline{y}$; by symmetry, $\overline{\sim}=\overline{\sim}'$.
Let us assume that $\psi(\overline{\sim},\grun\grun)=\psi(\overline{\sim}',\grun\grun)=\sim$. If $\overline{x}\overline{\sim} \overline{y}$:
\begin{itemize}
\item If $x,y\in V(G')$ or $x,y\in V(G'')$, then $x\sim y$, so $\overline{x}\overline{\sim} \overline{y}$.
\item If $(x,y)\in V(G')\times V(G'')$ or $(x,y)\in V(G'')\times V(G')$, up to a permutation we can assume that $x\in V(G')$ and $y\in V(G'')$.
As $\overline{\sim} \triangleleft G/e$, there exists a path from $\overline{x}$ to $\overline{y}$ formed by $\overline{\sim}$-equivalent vertices.
This path necessarily goes via $\overline{x'}=\overline{x''}$. Hence, $x\sim x'$ and $y\sim x''$, so $\overline{x}\overline{\sim}\overline{x'}$
and $\overline{y}\overline{\sim}\overline{x''}$, and finally $\overline{x}\overline{\sim} \overline{y}$.
\end{itemize}
By symmetry, $\overline{\sim}=\overline{\sim}'$. We proved that $\psi$ is injective.

Let $\sim \triangleleft G$. If $x'\sim x''$, then $\sim$ goes through the quotient $G/e$ and gives an equivalence $\overline{\sim}\triangleleft G/e$.
Moreover, $\psi(\overline{\sim},\grdeux)=\sim$. Otherwise, $\sim \triangleleft G\setminus e=G'G''$;
let us denote the equivalence classes of $\sim$ by $C_1,\ldots,C_{k+l}$, with 
$x'\in C_1$, $x''\in C_{k+1}$, $C_1,\ldots,C_k\subseteq V(G')$, $C_{k+1},\ldots,C_{k+l}\subseteq V(G'')$. 
Let $\overline{\sim}$ the equivalence on $V(G/e)$ which equivalence classes are
$\overline{C_1\sqcup C_{k+1}}, \overline{C_2},\ldots,\overline{C_k},\overline{C_{k+2}},\ldots,\overline{C_{k+l}}$.
Then $\overline{\sim}\triangleleft G/e$ and $\psi(\overline{\sim},\grun\grun)=\sim$. We proved that $\psi$ is surjective.

It is immediate that $\psi(\overline{\sim}_1,\sim_2)\leq  \psi(\overline{\sim}'_1,\sim'_2)$ if, and only if,
$\overline{\sim}_1\leq \overline{\sim}_1'$ and $\sim_2\leq \sim'_2$. So $\psi$ is a lattice isomorphism.\\

3. Let $\sim$, $\sim'$ be elements of $\rel(G)$. If $\sim \leq \sim'$, then the connected components of $G|\sim'$ are disjoint unions
of connected components of $G|\sim$, so $E(G|\sim)\subseteq E(G|\sim')$. 

If $E(G|\sim) \subseteq E(G|\sim')$, then the connected components
of $G|\sim'$ are disjoint unions of connected components of $G|\sim$, so $\sim \leq \sim'$.

Consequently, if $\zeta_G(\sim)=\zeta_G(\sim')$, then $\sim \leq \sim'$ and $\sim'\leq \sim$, so $\sim=\sim'$: $\zeta_G$ is injective.\\

Let us assume that $\zeta_G$ is surjective. Let $e\in E(G)$; we consider $\sim\in \rel(G)$, such that $\zeta_G(\sim)=E(G)\setminus e$.
In other words, $G|\sim=G\setminus e$. Hence, $\sim \neq \sim_1$, so $cl(\sim)<cl(\sim_1)$: $G|\sim$ has strictly more connected components
than $G$. This proves that $e$ is a bridge, so $G$ is a forest.\\

Let us assume that $G$ is a forest. We denote by $k$ the number of its edges. As any edge of $G$ is a bridge,
by the second point, $\rel(G)$ is isomorphic to $\rel(\grdeux)^k \times \rel(\grun)^{cc(G)}$, so is of cardinal $2^k\times 1^{cc(G)}=2^k$.
Hence, $\zeta_G$ is surjective. \end{proof}

\begin{remark}
As a consequence, isomorphic posets may be associated to non-isomorphic graphs: for example,
$\rel(\grquatrecinq)\approx \rel(\grquatresix)\approx \rel(\grdeux)^3$.
\end{remark}

\subsection{Applications}

\begin{cor} \label{cor19}
Let $G$ be a graph. 
\begin{enumerate}
\item  $\lambda_{chr}(G)$ is non-zero, of sign $(-1)^{\deg(G)}$.
\item We put $P_{chr}(G)=a_0+\ldots+a_nX^n$.
\begin{itemize}
\item For any $i$, $a_i \neq 0$ if, and only if, $cc(G)\leq i\leq |G|$.
\item If $cc(G)\leq i\leq |G|$, the sign of $a_i$ is $(-1)^{|G|-i}$.
\end{itemize}
\item $-a_{|G|-1}$ is the number of edges of $|G|$.
\end{enumerate}\end{cor}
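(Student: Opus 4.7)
The three parts are naturally handled in order.

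For (1), I would induct on the number of edges of $G$ using the recursion of Proposition \ref{prop18}. The base case is $G$ totally disconnected: then $deg(G)=0$ and $\lambda_{chr}(G)=1$, so the sign $(-1)^0$ is correct. For the inductive step, pick any edge $e$. When $e$ is a bridge, $deg(G/e)=deg(G)-1$, and the identity $\lambda_{chr}(G)=-\lambda_{chr}(G/e)$ flips the induction sign correctly. When $e$ is not a bridge, $deg(G\setminus e)=deg(G)$ (same vertex count and same number of connected components) while $deg(G/e)=deg(G)-1$; by induction $\lambda_{chr}(G\setminus e)$ and $-\lambda_{chr}(G/e)$ are both nonzero of sign $(-1)^{deg(G)}$, so their sum inherits that sign.

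For (2), I apply Corollary \ref{cor12} to write
$$a_i=\sum_{\sim\triangleleft G,\,cl(\sim)=i}\lambda_{chr}(G|\sim).$$
For each such $\sim$, the graph $G|\sim$ has $|G|$ vertices and $cl(\sim)=i$ connected components, so $deg(G|\sim)=|G|-i$. Part (1) then says every summand is nonzero of sign $(-1)^{|G|-i}$, so $a_i$ is nonzero of sign $(-1)^{|G|-i}$ whenever the index set is nonempty and zero otherwise. The values of $cl(\sim)$ achieved as $\sim$ ranges over $\rel(G)$ are exactly $\{cc(G),cc(G)+1,\ldots,|G|\}$: the inequalities hold because each $\sim$-class lies inside a single connected component of $G$ (giving $cc(G)\leq cl(\sim)$) and $cl(\sim)\leq |V(G)|=|G|$; conversely, by Proposition \ref{prop14}, $\rel(G)$ is a graded lattice from $\sim_0$ (with $cl=|G|$) to $\sim_1$ (with $cl=cc(G)$), and each covering relation changes $cl$ by exactly one, so every intermediate value is achieved.

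For (3), the $\sim\triangleleft G$ with $cl(\sim)=|G|-1$ are exactly those with one class of size two and the rest singletons; the nonsingleton class $\{x,y\}$ must induce a connected subgraph, hence must be an edge of $G$, so these $\sim$ are in bijection with $E(G)$. For each such $\sim$, $G|\sim=\grdeux\,\grun^{|G|-2}$, giving $\lambda_{chr}(G|\sim)=\lambda_{chr}(\grdeux)\lambda_{chr}(\grun)^{|G|-2}=-1$; summing yields $a_{|G|-1}=-\sharp E(G)$. The main obstacle is the induction in (1): in the non-bridge case, one must verify that $\lambda_{chr}(G\setminus e)$ and $-\lambda_{chr}(G/e)$ have the same sign so that their sum does not accidentally vanish; once this sign analysis is settled, the rest is combinatorial bookkeeping.
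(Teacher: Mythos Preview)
Your proposal is correct and follows essentially the same route as the paper: induction on the number of edges via Proposition~\ref{prop18} for part~(1), the expansion from Corollary~\ref{cor12} combined with part~(1) for part~(2), and the identification of equivalences with $cl(\sim)=|G|-1$ with edges for part~(3). The only cosmetic differences are that the paper repackages part~(1) by introducing $\tilde\lambda_{chr}(G)=(-1)^{deg(G)}\lambda_{chr}(G)$ and proving $\tilde\lambda_{chr}(G)\geq 1$, and that for the range of $cl(\sim)$ in part~(2) the paper gives a direct decreasing induction rather than invoking the graded-lattice structure of Proposition~\ref{prop14}; your use of that proposition is a clean shortcut. (Incidentally, your exponent $|G|-2$ in $G|\sim=\grdeux\,\grun^{|G|-2}$ is the correct one.)
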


\begin{proof} 1. For any graph $G$, we put $\tilde{\lambda}_{chr}(G)=(-1)^{\deg(G)}\lambda_{chr}(G)$. This defines 
a character $\tilde{\lambda}\in \mgr$.
Let us prove that for any edge $e$ of $G$:
\begin{align*}
\tilde{\lambda}_{chr}(G)&=\begin{cases}
\tilde{\lambda}_{chr}(G/e)\mbox{ if $e$ is a bridge},\\
\tilde{\lambda}_{chr}(G\setminus e)+\tilde{\lambda}_{chr}(G/e)\mbox{ otherwise}.
\end{cases}\end{align*}
We proceed by induction on the number $k$ of edges of $G$.
If $k=0$, there is nothing to prove. Let us assume the result at all ranks $<k$,
with $k\geq 1$. Let $e$ be an edge of $G$. We shall apply the induction hypothesis to $G/e$ and $G\setminus e$.
Note that $cc(G/e)=cc(G)$ and $|G/e|=|G|-1$, so $\deg(G/e)=\deg(G)-1$.
\begin{itemize}
\item If $e$ is a bridge, then:
\[\lambda_{chr}(G)=-(-1)^{\deg(G/e)} \tilde{\lambda}_{chr}(G/e)=(-1)^{\deg(G)} \tilde{\lambda}_{chr}(G/e).\]
\item If $e$ is not a bridge, then $cc(G\setminus e)=cc(G)$, and $|G\setminus e|=|G|$, so $\deg(G\setminus e)=\deg(G)$. Hence:
\begin{align*}
\lambda_{chr}(G/e)&=(-1)^{\deg(G\setminus e)}\tilde{\lambda}_{chr}(G\setminus e)-(-1)^{\deg(G/e)}\tilde{\lambda}_{chr}(G/e)\\
&=(-1)^{\deg(G)}\tilde{\lambda}_{chr}(G\setminus e)+(-1)^{\deg(G)}\tilde{\lambda}_{chr}(G/e)\\
&=(-1)^{\deg(G)}(\tilde{\lambda}_{chr}(G\setminus e)+\tilde{\lambda}_{chr}(G/e)).
\end{align*}\end{itemize}
So the result holds for any graph $G$. \\

If $G$ has no edge, then $\deg(G)=0$ and $\lambda_{chr}(G)=\tilde{\lambda}_{chr}(G)=1$.
An easy induction on the number of edges proves that for any graph $G$, $\tilde{\lambda}_{chr}(G)\geq 1$. \\

2. By Corollary \ref{cor12}, for any $i$:
\begin{align*}
a_i&=\sum_{\sim\triangleleft G,\:cl(\sim)=i} \lambda_{chr}(G|\sim)\\
&=\sum_{\sim\triangleleft G,\:cl(\sim)=i}(-1)^{|G|-i}\tilde{\lambda}_{chr}(G|\sim)\\
&=(-1)^{|G|-i}\sum_{\sim\triangleleft G,\:cl(\sim)=i}\tilde{\lambda}_{chr}(G|\sim).
\end{align*}
As for any graph $H$, $\tilde{\lambda}_{chr}(H)\geqslant 1$, this is non-zero if, and only if, 
there exists a relation $\sim\triangleleft G$, such that $cl(\sim)=i$.
If this holds, the sign of $a_i$ is $(-1)^{|G|-i}$. It remains to prove that there  exists a relation $\sim\triangleleft G$, 
such that $cl(\sim)=i$ if, and only if, $cc(G)\leq i \leq |G|$.\\

$\Longrightarrow$. If $\sim \triangleleft G$, with $cl(\sim)=i$, as the equivalence classes of $\sim$ are connected, each connected component of $G$
is a union of classes of $\sim$, so $i\geq cc(G)$. Obviously, $i\leq |G|$.

$\Longleftarrow$. We proceed by decreasing induction on $i$. If $i=|G|$, then the equality of $V(G)$ answers the question.
Let us assume that $cc(G)\leq i<|G|$ and that the result holds at rank $i+1$. Let $\sim' \triangleleft G$, with $cl(\sim')=i+1$.
We denote by $I_1,\ldots,I_{i+1}$ the equivalence classes of $\sim'$. As $I_1,\ldots, I_{i+1}$ are connected, the connected components of $G$
are union of $I_p$; as $i+1>cc(G)$, one of the connected components of $G$, which we call $G'$, contains at least two equivalence classes of $\sim'$.
As $G'$ is connected, there exists an edge in $G'$, relation two vertices into different equivalence classes of $\sim'$; up to a reindexation,
we assume that they are $I_1$ and $I_2$. Hence, $I_1\sqcup I_2$ is connected.
We consider the relation $\sim$ which equivalence classes are $I_1\sqcup I_2,I_3,\ldots,I_{i+1}$: then $\sim \triangleleft G$ and $cl(\sim)=i$. \\

3. For $i=|G|-1$, we have to consider relations $\sim\triangleleft G$ such that $cl(\sim)=|G|-1$. These equivalences are in bijection with edges, via the map
$\zeta_G$ of Proposition \ref{prop18}.
For such an equivalence, $G|\sim=\grdeux \grun^{|G|-1}$, so $\lambda_{chr}(G|\sim)=-1$. Finally, $a_i=-|E(V)|$. \end{proof}

\begin{remark}
 The result on the signs of the coefficients of $P_{chr}(G)$ is due to Rota \cite{Rota}, who proved it using 
the Möbius function of the poset of Proposition \ref{prop18}.
\end{remark}

\begin{cor}
Let $G$ be a graph; $|\lambda_{chr}(G)|=1$ if, and only if, $G$ is a forest.
\end{cor}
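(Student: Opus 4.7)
The plan is to exploit the edge recursion and positivity already established in the proof of Corollary \ref{cor19}. Recall that, putting $\tilde{\lambda}_{chr}(G)=(-1)^{deg(G)}\lambda_{chr}(G)$, one has $\tilde{\lambda}_{chr}(H)\geq 1$ for every graph $H$, and for any edge $e$ of $G$:
$$\tilde{\lambda}_{chr}(G)=\begin{cases}
\tilde{\lambda}_{chr}(G/e)&\text{if $e$ is a bridge,}\\
\tilde{\lambda}_{chr}(G\setminus e)+\tilde{\lambda}_{chr}(G/e)&\text{otherwise.}
\end{cases}$$
Since $|\lambda_{chr}(G)|=\tilde{\lambda}_{chr}(G)$, the problem reduces to showing $\tilde{\lambda}_{chr}(G)=1$ if and only if $G$ is a forest.

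For the forward direction, I will induct on the number of edges of $G$. The base case is the edgeless graph, where $\lambda_{chr}(G)=\tilde{\lambda}_{chr}(G)=1$. For the inductive step, if $G$ is a nonempty forest, pick any edge $e$; as every edge of a forest is a bridge and $G/e$ is again a forest with one less edge, the recursion gives $\tilde{\lambda}_{chr}(G)=\tilde{\lambda}_{chr}(G/e)=1$ by induction.

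For the converse, if $G$ is not a forest, then $G$ contains a cycle, and any edge $e$ lying on a cycle fails to be a bridge. Applying the second case of the recursion together with the inequalities $\tilde{\lambda}_{chr}(G\setminus e)\geq 1$ and $\tilde{\lambda}_{chr}(G/e)\geq 1$ yields $\tilde{\lambda}_{chr}(G)\geq 2$, hence $|\lambda_{chr}(G)|\geq 2$. There is no substantial obstacle here: all the difficult content has been absorbed into Corollary \ref{cor19}, and the only fact to keep in mind is the elementary graph-theoretic observation that an edge is a bridge if and only if it lies on no cycle.
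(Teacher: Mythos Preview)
Your proof is correct and follows essentially the same route as the paper: both directions use the edge recursion from Proposition~\ref{prop18} (packaged via $\tilde{\lambda}_{chr}$ in Corollary~\ref{cor19}), with an induction on the number of edges for forests and the observation that a non-bridge edge forces $|\lambda_{chr}(G)|\geq 2$ otherwise. The only cosmetic difference is that the paper works directly with $|\lambda_{chr}|$ rather than invoking $\tilde{\lambda}_{chr}$, and your labeling of ``forward'' and ``converse'' is swapped relative to the statement as written.
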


\begin{proof}
$\Longleftarrow$. Then each component of $G$ is a tree. The result then comes from Example \ref{ex2.1}, last point.\\

$\Longrightarrow$. If $G$ is not a forest, there exists an edge $e$ of $G$ which is not a bridge. Then:
\[|\lambda_{chr}(G)|=|\lambda_{chr}(G\setminus e)|+|\lambda_{chr}(G/e)|\geq 1+1=2.\]
So $|\lambda_{chr}(G)|\neq 1$. \end{proof}

\begin{lemma}
If $G$ is a graph and $e$ is a bridge of $G$, then:
\[\lambda_{chr}(G)=-\lambda_{chr}(G\setminus e)=-\lambda_{chr}(G/e).\]
\end{lemma}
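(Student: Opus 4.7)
The plan is to split the double equality into its two halves. The first equality $\lambda_{chr}(G) = -\lambda_{chr}(G/e)$ is nothing but Proposition \ref{prop18} specialized to the bridge case, so it requires no new work. The substantive content lies in the second equality, which by the first is equivalent to showing $\lambda_{chr}(G\setminus e) = \lambda_{chr}(G/e)$.

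First I would reduce to the case where $G$ is connected. If $G = G_0 H$ with $G_0$ the connected component containing $e$, then $G\setminus e = (G_0\setminus e)H$ and $G/e = (G_0/e)H$, so by multiplicativity of the character $\lambda_{chr}$ on disjoint unions the identity for $G$ follows from the identity for $G_0$. With $G$ connected and $e = \{x', x''\}$, I would then write $G\setminus e = G'G''$, where $G'$ (resp.\ $G''$) is the connected component of $G\setminus e$ containing $x'$ (resp.\ $x''$). Multiplicativity again yields $\lambda_{chr}(G\setminus e) = \lambda_{chr}(G')\lambda_{chr}(G'')$, so the problem reduces to proving $\lambda_{chr}(G/e) = \lambda_{chr}(G')\lambda_{chr}(G'')$.

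The core step is to establish the classical \emph{wedge formula} $P_{chr}(G/e) = P_{chr}(G')P_{chr}(G'')/X$, justified by the fact that $G/e$ is the one-point join of $G'$ and $G''$ at the identified vertex $x'=x''$. I would prove this by a direct count of proper $k$-colorings: a valid $k$-coloring of $G/e$ is precisely a pair of valid $k$-colorings of $G'$ and $G''$ that agree on the common vertex, and by color symmetry each fixed color at that vertex admits $P_{chr}(G')(k)/k$ extensions to $G'$ and $P_{chr}(G'')(k)/k$ extensions to $G''$; summing over the $k$ colors gives $P_{chr}(G')(k)P_{chr}(G'')(k)/k$. Since this holds for every positive integer $k$, it holds as an equality of polynomials.

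To conclude, writing $P_{chr}(G') = XQ'$ and $P_{chr}(G'') = XQ''$ with $Q'(0) = \lambda_{chr}(G')$ and $Q''(0) = \lambda_{chr}(G'')$ (both $G'$ and $G''$ being connected and nonempty), the wedge formula gives $P_{chr}(G/e) = XQ'Q''$, so differentiating at $0$ yields $\lambda_{chr}(G/e) = Q'(0)Q''(0) = \lambda_{chr}(G')\lambda_{chr}(G'') = \lambda_{chr}(G\setminus e)$, which completes the proof. The main obstacle is the wedge formula itself: it is purely combinatorial and not directly derivable from the Hopf-algebraic machinery built so far, so it genuinely requires a coloring argument. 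An alternative route would be an induction on $|V(G)|$ applying Proposition \ref{prop18} to an auxiliary edge $f$ in $G'$ or $G''$ (and checking that contraction and deletion of $f$ commute appropriately with those of $e$), but this is more tedious because it forces a case split on whether $f$ is itself a bridge.
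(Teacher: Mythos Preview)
Your argument is correct. The first half is indeed immediate from Proposition~\ref{prop18}, and for the second half your reduction to the connected case, the vertex-gluing (wedge) formula $P_{chr}(G/e)=P_{chr}(G')P_{chr}(G'')/X$ established by the color-symmetry count, and the extraction of $\lambda_{chr}$ via $P_{chr}=XQ$ for connected graphs are all sound.

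The paper takes a different route. Rather than invoking the vertex-gluing formula, it proves $\lambda_{chr}(G)=-\lambda_{chr}(G\setminus e)$ by induction on the number of \emph{non-bridge} edges of $G$. In the base case $G$ is a forest, and the identity follows from the explicit value $\lambda_{chr}(\text{forest})=(-1)^{deg}$ together with $cc(G\setminus e)=cc(G)+1$. In the inductive step one picks a non-bridge edge $f$; since $e$ is a bridge, no cycle passes through $e$, so the cycle through $f$ survives in $G\setminus e$ and $f$ remains a non-bridge there, while $e$ remains a bridge in both $G\setminus f$ and $G/f$. One then applies Proposition~\ref{prop18} to $f$ and uses the commutation $(G\setminus f)\setminus e=(G\setminus e)\setminus f$, $(G/f)\setminus e=(G\setminus e)/f$. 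This is essentially the ``alternative route'' you sketched at the end, except that inducting on the number of non-bridge edges rather than on $|V(G)|$ avoids the case split you were worried about: $f$ is chosen to be a non-bridge from the outset. The trade-off is that your approach is shorter and conceptually transparent (one classical identity does all the work), while the paper's argument stays entirely within the deletion--contraction toolkit already set up and avoids importing an external coloring lemma.
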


\begin{proof} We already proved in Proposition \ref{prop18} that $\lambda_{chr}(G)=-\lambda_{chr}(G/e)$. 
Let us prove that $\lambda_{chr}(G)=-\lambda_{chr}(G\setminus e)$ by induction on the number $k$ of edges of $G$ which are not bridges.
If $k=0$, then $G$ and $G\setminus e$ are forests with $n$ vertices,
$cc(G\setminus e)=cc(G)+1$ and:
\[\lambda_{chr}(G)=-\lambda_{chr}(G\setminus e)=(-1)^{\deg(G)}.\]
Let us assume the result at rank $k-1$, $k\geq 1$. Let $f$ be an edge of $G$ which is not a bridge of $G$.
\begin{align*}
\lambda_{chr}(G)&=\lambda_{chr}(G\setminus f)-\lambda_{chr}(G/f)\\
&=-\lambda_{chr}((G\setminus f)\setminus e)+\lambda_{chr}((G/f)\setminus e)\\
&=-\lambda_{chr}((G\setminus e)\setminus f)+\lambda_{chr}((G\setminus e)/f)\\
&=-\lambda_{chr}(G\setminus e).
\end{align*}
So the result holds for any bridge of any graph. \end{proof}

\begin{prop}\begin{enumerate}
\item Let $G$ and $H$ be two graphs, with $V(G)=V(H)$ and $E(G)\subseteq E(H)$. Then:
\[|\lambda_{chr}(G)|\leq |\lambda_{chr}(H)|+cc(G)-cc(H)-\sharp(E(H)-E(G))\leq |\lambda_{chr}(H)|.\]
Moreover, if $cc(G)=cc(H)$, then $|\lambda_{chr}(G)|=|\lambda_{chr}(H)|$ if, and only if, $G=H$.
\item For any graph $G$, $|\lambda_{chr}(G)|\leq (|G|-1)!$, with equality if, and only if, $G$ is complete.
\end{enumerate}\end{prop}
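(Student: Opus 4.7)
The plan is to prove part 1 by induction on $N=\sharp(E(H)-E(G))$, with trivial base case $N=0$. For $N\geq 1$, I would choose $e\in E(H)\setminus E(G)$ and set $H'=(V(H),E(H)\setminus\{e\})$, so that $G\subseteq H'\subsetneq H$ and $\sharp(E(H')-E(G))=N-1$; then apply the inductive hypothesis to the pair $(G,H')$ and relate $\lambda_{chr}(H)$ to $\lambda_{chr}(H')$ using Proposition \ref{prop18} together with the preceding lemma on bridges. If $e$ is a bridge of $H$, then $cc(H')=cc(H)+1$ and $|\lambda_{chr}(H)|=|\lambda_{chr}(H')|$, so the $+1$ from $cc(H')-cc(H)$ exactly offsets the $+1$ gained in the bound when going from $N-1$ to $N$; the estimate propagates with equality. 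If $e$ is not a bridge, then $cc(H')=cc(H)$ and $\lambda_{chr}(H)=\lambda_{chr}(H')-\lambda_{chr}(H/e)$; combining this with Corollary \ref{cor19}---and noticing that $deg(H/e)=deg(H')-1$ forces $\lambda_{chr}(H')$ and $-\lambda_{chr}(H/e)$ to share the sign $(-1)^{deg(H)}$---gives $|\lambda_{chr}(H)|=|\lambda_{chr}(H')|+|\lambda_{chr}(H/e)|\geq|\lambda_{chr}(H')|+1$, which absorbs the required shift.

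The second inequality $cc(G)-cc(H)-N\leq 0$ holds because each edge added to $G$ decreases the number of connected components by at most one, so the total decrease is bounded by $N$. The equality clause then follows immediately from the first inequality: if $cc(G)=cc(H)$ and $G\neq H$, then $N\geq 1$ forces $|\lambda_{chr}(G)|\leq|\lambda_{chr}(H)|-1<|\lambda_{chr}(H)|$.

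For part 2, I would apply part 1 with $H$ taken to be the complete graph on $V(G)$. Since $|\lambda_{chr}(K_n)|=(n-1)!$ with $n=|G|$ by the example following Proposition \ref{prop13}, the inequality $|\lambda_{chr}(G)|\leq(n-1)!$ is immediate. When $G$ is connected, $cc(G)=cc(K_n)=1$ and the equality clause of part 1 forces $G=K_n$ whenever $|\lambda_{chr}(G)|=(n-1)!$; the disconnected case is then reduced via multiplicativity of $\lambda_{chr}$ to comparing $\prod_i(n_i-1)!$ with $(n-1)!$ in terms of the component sizes $n_i$, which can be done by a direct combinatorial estimate.

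The most delicate point will be the non-bridge case in part 1: one must correctly identify the relative signs of $\lambda_{chr}(H')$ and $\lambda_{chr}(H/e)$, using the degree computation and Corollary \ref{cor19}, so that $|\lambda_{chr}(H)|$ equals the sum (not the difference) of $|\lambda_{chr}(H')|$ and $|\lambda_{chr}(H/e)|$. Everything else is inductive bookkeeping or a direct appeal to previously established formulas.
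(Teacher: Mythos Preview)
Your proposal is correct and essentially identical to the paper's proof: the paper builds an explicit chain $G=G_0,\ldots,G_k=H$ with $G_{i-1}=G_i\setminus e_i$ and tallies the bridge versus non-bridge contributions, which is exactly your induction on $N$ unrolled. The treatment of part 2 (apply part 1 with $H=K_n$, then handle the disconnected case via multiplicativity and the estimate $(n_1-1)!(n_2-1)!<(n-1)!$) also matches the paper.
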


\begin{proof} 1. We put $k=\sharp(E(H)\setminus E(G))$. There exists a sequence $e_1,\ldots,e_k$ of edges of $H$ such that:
\begin{align*}
G_0&=G,&G_k&=H,&\forall i\in [k], \: G_{i-1}&=G_i\setminus e_i.
\end{align*}
For all $i$, $cc(G_i)=cc(G_{i-1})+1$ if $e_i$ is a bridge of $G_i$, and $cc(G_i)=cc(G_{i-1})$ otherwise.
Hence, $cc(G)-cc(H)\leq k$. We denote by $I$ the set of indices $i$ such that $cc(G_i)=cc(G_{i-1})$; then $\sharp I=k-cc(G)+cc(H)$. Moreover:
\[|\lambda_{chr}(G_i)|=\begin{cases}
|\lambda_{chr}(G_{i-1})|+|\lambda_{chr}((G_i)/e_i)|>|\lambda_{chr}(G_{i-1})| \mbox{ if $i\in I$},\\
|\lambda_{chr}(G_{i-1})| \mbox{ if $i\notin I$}.
\end{cases}\]
As a conclusion, $|\lambda_{chr}(G)|\leq |\lambda_{chr}(H)|-\sharp I=|\lambda_{chr}(H)|+cc(G)-cc(H)-k\leq |\lambda_{chr}(H)|$.\\

If $cc(G)=cc(H)$ and $|\lambda_{chr}(G)|=|\lambda_{chr}(H)|$, then $k=0$, so $G=H$. \\

2. We put $n=|G|$. We apply the first point with $H$ the complete graph such that $V(H)=V(G)$. We already observed that $|\lambda_{chr}(H)|=(n-1)!$, so:
\[|\lambda_{chr}(G)| \leq (n-1)!.\]
If $G$ is not connected, there exist graphs $G_1$, $G_2$ such that $G=G_1G_2$, $n_1=|G_1]<n$, $n_2=|G_2|<n$. Hence:
\[|\lambda_{chr}(G)|=|\lambda_{chr}(G_1)||\lambda_{chr}(G_2)\leq (n_1-1)!(n_2-1)!\leq (n_1+n_2-2)<(n-1)!.\]
If $G$ is connected, then $cc(G)=cc(H)$: if $|\lambda_{chr}(G)|=|\lambda_{chr}(H)|$, then $G=H$. \end{proof}

\subsection{Values of the chromatic polynomial at negative integers}

\begin{theo} \label{theo23}
Let $k\geq 1$ and $G$ a graph. Then $(-1)^{|G|}P_{chr}(G)(-k)$ is the number of families $((I_1,\ldots,I_k),O_1,\ldots,O_k)$ such that:
\begin{itemize}
\item $I_1\sqcup \ldots \sqcup I_k=V(G)$ (note that one may have empty $I_p$'s).
\item For all $1\leq i \leq k$, $O_i$ is an acyclic orientation of $G_{\mid I_i}$.
\end{itemize}
In particular, $(-1)^{|G|} P_{chr}(G)(-1)$ is the number of acyclic orientations of $G$.
\end{theo}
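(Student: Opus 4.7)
The plan is to reduce to the special case $k=1$, which is Stanley's classical reciprocity theorem: $(-1)^{|G|}P_{chr}(G)(-1)$ is the number of acyclic orientations of $G$. Once this is proved, the general case follows by applying the iterated coproduct $\Delta$ and exploiting that $P_{chr}$ is a Hopf algebra morphism from $(\hgr,m,\Delta)$ to $(\Q[X],m,\Delta)$ by theorem \ref{theo11}.

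For the case $k=1$, I would argue by induction on the number of edges of $G$. The base case, $G$ totally disconnected, gives $P_{chr}(G)=X^{|G|}$ by proposition \ref{prop9} combined with theorem \ref{theo11}, so $(-1)^{|G|}P_{chr}(G)(-1)=1$, matching the unique (empty) orientation of the edgeless graph. For the inductive step, fix an edge $e$ of $G$. Proposition \ref{prop18} gives $P_{chr}(G)=P_{chr}(G\setminus e)-P_{chr}(G/e)$; evaluating at $-1$ and using $|G\setminus e|=|G|$, $|G/e|=|G|-1$ yields
\begin{align*}
(-1)^{|G|}P_{chr}(G)(-1)=(-1)^{|G\setminus e|}P_{chr}(G\setminus e)(-1)+(-1)^{|G/e|}P_{chr}(G/e)(-1).
\end{align*}
The combinatorial counterpart of proposition \ref{prop18} for acyclic orientations asserts that the number of acyclic orientations of $G$ equals that of $G\setminus e$ plus that of $G/e$; combined with the induction hypothesis, this closes the argument.

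For general $k$, I would iterate the coproduct. Since $P_{chr}$ is a Hopf algebra morphism for $\Delta$ and $\Delta(P)(X,Y)=P(X+Y)$, induction on $k$ yields
\begin{align*}
P_{chr}(G)(X_1+\cdots+X_k)=\sum_{V(G)=I_1\sqcup\cdots\sqcup I_k}\prod_{i=1}^k P_{chr}(G_{\mid I_i})(X_i).
\end{align*}
Specializing $X_1=\cdots=X_k=-1$ and distributing the sign $(-1)^{|G|}=\prod_i(-1)^{|I_i|}$ gives
\begin{align*}
(-1)^{|G|}P_{chr}(G)(-k)=\sum_{V(G)=I_1\sqcup\cdots\sqcup I_k}\prod_{i=1}^k(-1)^{|I_i|}P_{chr}(G_{\mid I_i})(-1),
\end{align*}
and by the $k=1$ case each factor is the number of acyclic orientations of $G_{\mid I_i}$, producing exactly the enumeration claimed.

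The main obstacle is the deletion-contraction identity for acyclic orientations, since this is a genuinely combinatorial fact not immediately supplied by the algebraic machinery. It can be proved by the standard bijective argument: every acyclic orientation of $G\setminus e$ extends to at least one acyclic orientation of $G$, and to exactly two precisely when neither endpoint of $e$ is reachable from the other in the orientation; the ``doubly extendable'' orientations are in bijection with the acyclic orientations of $G/e$ via contraction of $e$. With this lemma in hand, the induction of step two and the Hopf-algebraic expansion of step three combine to give the theorem, and the $k=1$ statement on acyclic orientations appears as the particular case written out at the end.
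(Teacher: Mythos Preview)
Your proof is correct and follows essentially the same route as the paper: deletion--contraction on $(-1)^{|G|}P_{chr}(G)(-1)$ to establish the $k=1$ case by induction on the number of edges, then the iterated coproduct $\Delta^{(k-1)}$ together with the Hopf algebra morphism property of $P_{chr}$ to pass to general $k$. If anything, you are more explicit than the paper about the one genuinely combinatorial ingredient---the deletion--contraction recurrence for the number of acyclic orientations---which the paper invokes only implicitly in the phrase ``an induction on the number of edges of $G$ proves\ldots''.
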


\begin{proof} By the extraction-contraction process:
\begin{itemize}
\item If $G$ is totally disconnected, $(-1)^{|G|} P_{chr}(G)(-1)=1$.
\item If $G$ has an edge $e$, $(-1)^{|G|}P_{chr}(G)(-1)=(-1)^{|G\setminus e|} P_{chr}(G\setminus e)(-1)
+(-1)^{|G/e|} P_{chr}(G/e)(-1)$.
\end{itemize}
For any graph $H$, let us denote by $\calA(H)$ the set of acyclic orientations of $H$. 
Let $G$ be a graph and $e=\{x,y\}$ be an edge of $G$. 
If $\sigma \in \calA(G/e)$,
we deduce an orientation $\overline{\sigma}$ of $G\setminus e$ by lifting the orientations of the edges of $G/e$ to
the edges of $G\setminus e$. Obviously, this defines an injective map $\iota$ from $\calA(G/e)$ to $\calA(G\setminus e)$.

If $\sigma\in \calA(G/e)$, let us denote by $\iota_+(\sigma)$, respectively $\iota_-(G)$, 
 the orientation of $G$ obtained from $\iota(\sigma)$ by orientating $e$ from $x$ to $y$, respectively from $y$ to $x$.
Let us assume that one of them is not acyclic. We obtain for example a cycle 
 \[x\rightarrow y\rightarrow x_1\rightarrow\ldots \rightarrow x_k=x,\]
 which induces a cycle in the orientation $\sigma$ of $G/e$:  this is a contradiction. 
 We obtain two maps $\iota_+,\iota_-:\calA(G/e)\longrightarrow \calA(G)$,  both injective, with disjoint images. 
  
 Let $\sigma \in \calA(G\setminus e)\setminus \iota(\calA(G/e))$. We denote by $\sigma_+$,
 respectively $\sigma_-$, the orientation of $G$ obtained from $\sigma$ by orientating $e$ from $x$ to $y$, respectively from $y$ to $x$.
 As $\sigma \notin \iota(\calA(G/e))$, there exists a vertex $z\in V(G)$, with edges $\{x,z\}$ and $\{y,z\}$,
 such that, $\{x,z\}$ is oriented from $x$ to $z$ and $\{y,z\}$ from $z$ to $y$, up to a permutation of $x$ and $y$.
 Then $y\rightarrow x\rightarrow z \rightarrow y$ is a cycle in $\sigma_-$ : at most one of $\sigma_+$ and $\sigma_-$
 is acyclic. Let us assume that none of them is acyclic. We obtain two cycles in $\sigma_+$ and $\sigma_-$:
 \begin{align*}
 &x\rightarrow y\rightarrow y_1 \ldots \rightarrow y_k=x,&
 &y\rightarrow x\rightarrow x_1 \ldots \rightarrow x_l=y.
 \end{align*}
We obtain then a cycle $y\rightarrow y_1\ldots \rightarrow y_k\rightarrow x_1\rightarrow \ldots \rightarrow x_l=y$
in $\sigma$, which is not acyclic. Hence, exactly one of $\sigma_-$ and $\sigma_+$ is acyclic:
we obtain an injective map $\kappa:\calA(G\setminus e)\setminus \iota(\calA(G/e))\longrightarrow \calA(G)$.
Clearly, the images of three maps are disjoint and cover the whole $\calA(G)$. Hence:
\[|\calA(G)|=2|\calA(G/e)|+|\calA(G\setminus e)\setminus \calA(G/e)|=|\calA(G/e)|+|\calA(G\setminus e)|.\]
An easy induction on the number of edges of $G$ then proves that $(-1)^{|G|}P_{chr}(G)(-1)$ is indeed $|\calA(G)|$. \\

If $k\geq 2$:
\begin{align*}
(-1)^{|G|}P_{chr}(G)(-k)&=(-1)^{|G|}P_{chr}(G)((-1)+\ldots+(-1))\\
&=(-1)^{|G|}\Delta^{(k-1)}\circ P_{chr}(G)(-1,\ldots,-1)\\
&=(-1)^{|G|}P_{chr}^{\otimes k}\circ \Delta^{(k-1)}(G)(-1,\ldots,-1)\\
&=(-1)^{|G|}\sum_{V(G)=I_1\sqcup \ldots \sqcup I_k}P_{chr}(G_{\mid I_1})(-1)\ldots P_{chr}(G_{\mid I_k})(-1)\\
&=\sum_{V(G)=I_1\sqcup \ldots \sqcup I_k} (-1)^{|G_{\mid I_1}|}P_{chr}(G_{\mid I_1})(-1)\ldots 
(-1)^{|G_{\mid I_k}|}P_{chr}(G_{\mid I_k})(-1).
\end{align*}
The case $k=1$ implies the result. \end{proof}

We recover the interpretation of Stanley  \cite{StanleyAcyclic}:

\begin{cor} \label{cor24}
Let $k\geq 1$ and $G$ a graph. Then $(-1)^{|G|}P_{chr}(G)(-k)$ is the number of pairs $(f,O)$ where
\begin{itemize}
\item $f$ is a map from $V(G)$ to $[k]$.
\item $O$ is an acyclic orientation of $G$.
\item If there is an oriented edge from $x$ to $y$ in $V(G)$ for the orientation $O$, then $f(x)\leq f(y)$.
\end{itemize} \end{cor}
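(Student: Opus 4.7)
The plan is to deduce the corollary from Theorem \ref{theo23} by exhibiting an explicit bijection between the two families of combinatorial objects. Theorem \ref{theo23} already computes $(-1)^{|G|}P_{chr}(G)(-k)$ as the number of tuples $((I_1,\ldots,I_k),O_1,\ldots,O_k)$ where $(I_1,\ldots,I_k)$ is an ordered partition of $V(G)$ (possibly with empty parts) and each $O_i$ is an acyclic orientation of $G_{\mid I_i}$. So it suffices to biject these tuples with the pairs $(f,O)$ of the statement.

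In one direction, starting from $(f,O)$, I would set $I_i=f^{-1}(i)$ for $i\in[k]$ and take $O_i$ to be the restriction of $O$ to $E(G_{\mid I_i})$. Since a restriction of an acyclic orientation is acyclic, each $O_i$ is an acyclic orientation of $G_{\mid I_i}$, so this produces a tuple of the desired type.

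In the other direction, starting from $((I_1,\ldots,I_k),O_1,\ldots,O_k)$, I would define $f\colon V(G)\to[k]$ by $f(x)=i$ if $x\in I_i$, and define an orientation $O$ as follows: for an edge $\{x,y\}\in E(G)$ with $f(x)=f(y)=i$, orient it according to $O_i$; for an edge $\{x,y\}$ with $f(x)<f(y)$, orient it from $x$ to $y$. By construction the compatibility $f(x)\leq f(y)$ along oriented edges holds.

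The one point requiring genuine checking, and which I expect to be the main obstacle, is that this orientation $O$ is acyclic. Suppose for contradiction that there is an oriented cycle $x_0\to x_1\to\cdots\to x_r=x_0$. Compatibility forces $f(x_0)\leq f(x_1)\leq\cdots\leq f(x_r)=f(x_0)$, so all the $x_j$ share a common value $i=f(x_0)$, i.e. they lie in $I_i$. But then the cycle is an oriented cycle of $O_i$ in $G_{\mid I_i}$, contradicting that $O_i$ is acyclic. Finally, the two constructions are visibly inverse to each other: going $(f,O)\mapsto\text{tuple}\mapsto(f',O')$ recovers $f'=f$ from the definition of the $I_i$, and $O'=O$ because every edge is either internal to some $I_i$ (where both maps use $O_i$, which is the restriction of $O$) or crosses two distinct parts (where the orientation is forced by the compatibility condition on $(f,O)$). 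This finishes the proof, and the special case $k=1$ recovers Stanley's count of acyclic orientations.
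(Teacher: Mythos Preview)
Your proof is correct and follows essentially the same approach as the paper: both deduce the corollary from Theorem~\ref{theo23} via the same explicit bijection, sending $((I_1,\ldots,I_k),O_1,\ldots,O_k)$ to the pair $(f,O)$ with $f^{-1}(i)=I_i$ and edges oriented by $O_i$ within each $I_i$ and by increasing $f$-value across parts. Your acyclicity argument and your verification that the two constructions are mutually inverse are slightly more explicit than the paper's, which simply asserts that bijectivity ``is then not difficult to see.''
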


\begin{proof} Let $A$ be the set of families defined in Theorem \ref{theo23}, and $B$ be the set of pairs defined in Corollary \ref{cor24}.
We define a bijection $\theta:A\longrightarrow B$ in the following way: if $((I_1,\ldots,I_k),O_1,\ldots,O_k) \in A$, 
we put $\theta((I_1,\ldots,I_k),O_1,\ldots,O_k)=(f,O)$, such that:
\begin{enumerate}
\item $f^{-1}(p)=I_p$ for any $p\in [k]$.
\item If $e=\{x,y\} \in E(G)$, we put $f(x)=i$ and $f(x)=j$. If $i=j$, then $e$ is oriented as in $O_i$. Otherwise, if $i<j$,
$e$ is oriented from $i$ to $j$ if $i<j$ and from $j$ to $i$ if $i>j$.
\end{enumerate}
Note that $O$ is indeed acyclic: if there is an oriented path from $x$ to $y$ in $G$ of length $\geq 1$,
then $f$ increases along this path. If $f$ remains constant, as $O_{f(x)}$ is acyclic, $x\neq y$. Otherwise, $f(x)<f(y)$, so $x\neq y$. 
It is then not difficult to see that $\theta$ is bijective. \end{proof}

This gives us a formula for the antipode of $(\hgrd,m,\Delta)$, proved in \cite{HumpertMartin} in another way
in the nondecorated case:

\begin{cor}\label{corantipode}
Let us denote by $S$ the antipode of $(\hgrd,m,\Delta)$. For any graph $G\in \gr(\calD)$:
\[S(G)=\sum_{\sim \triangleleft G} (-1)^{cl(\sim)} \sharp\{\mbox{acyclic orientations of $G/\sim$}\} G\mid \sim.\]
\end{cor}

\begin{proof}
Let us denote by $\star$ the convolution product associated to $\Delta$ in $\mgrd$, and by $\mu=\varepsilon'\circ S$ 
the inverse of $\varepsilon'$ for $\star$. 
Let us put $T=(\mu\otimes \id)\circ \delta$. Then, in the convolution algebra $\mathrm{End}(\hgrd)$, with the product $\star$
associated to the Hopf algebra $(\hgrd,m,\Delta)$:
\begin{align*}
T\star \id&=m\circ (\mu\otimes \id)\circ (\delta \otimes \id)\circ \Delta\\
&=m\circ(\mu\otimes \id \otimes \varepsilon' \otimes \id)\circ (\delta \otimes \delta)\circ \Delta\\
&=(\mu\otimes \varepsilon' \otimes \id)\circ m_{1,3,24}\circ (\delta \otimes \delta)\circ \Delta\\
&=(\mu\otimes \varepsilon' \otimes \id)\circ (\Delta \otimes \id)\circ \delta\\
&=(\mu \star \varepsilon'\otimes \id)\circ \delta\\
&=(\varepsilon \otimes \id)\circ \delta\\
&=\eta \circ \varepsilon,
\end{align*}
where $\eta:\Q\longrightarrow \hgrd$ send $1\in \Q$ on the empty graph (unit map). Consequently:
\[T=T\star \id \star S=(\eta \circ \varepsilon)\star S=S.\]

Let us now prove that for any graph $G\in \gr(\calD)$:
\[\mu(G)=P_{chr}^{(\calD)}(G)(-1).\]
As $P_{chr}^{(\calD)}$ is a Hopf algebra morphism from $(\hgrd,m,\Delta)$ to $(\Q[X],m,\Delta)$
and $\varepsilon'\circ P_{chr}^{(\calD)}=\varepsilon'$:
\[P_{chr}^{(\calD)}(G)(-1)=S\circ P_{chr}^{(\calD)}(G)(1)=\varepsilon'\circ S\circ P_{chr}^{(\calD)}(G)
=\varepsilon'\circ P_{chr}^{(\calD)}\circ S(G)=\varepsilon'\circ S(G)=\mu(G).\]
By Theorem \ref{theo23}:
\[\mu(G)= (-1)^{|G|} \sharp\{\mbox{acyclic orientations of $G$}\} .\qedhere\]
 \end{proof}

\section{Chromatic symmetric  functions}

\subsection{Reminders on $\QSym$}

The Hopf algebra $\QSym$ \cite{Aguiar2,Gelfand,Hazewinkel,MR3,Stanleyenu} 
has a basis $(M_u)$ indexed by compositions, that is to say finite sequences of positive integers.
Its product is given by quasi-shuffles. For example, if $a,b,c,d\in \N_{>0}$:
\begin{align*}
M_aM_{bcd}&=M_{abcd}+M_{bacd}+M_{bcad}+M_{bcda}+M_{(a+b)cd}+M_{b(a+c)d}+M_{ab(c+d)},\\
M_{ab}M_{cd}&=M_{abcd}+M_{acbd}+M_{acdb}+M_{cabd}+M_{cadb}+M_{cdab}\\
&+M_{(a+c)bd}+M_{(a+c)db}+M_{c(a+d)b}+M_{a(b+c)d}+M_{ac(b+d)}+M_{ca(b+d)}+M_{(a+b)(c+d)}.
\end{align*}
Its coproduct is given by deconcatenation: for any composition $w$,
\begin{align*}
\Delta(M_w)&=\sum_{uv=w}M_u\otimes M_v.
\end{align*}
For example, if $a,b,c\in \N_{>0}$:
\begin{align*}
\Delta(M_a)&=M_a\otimes 1+1\otimes M_a,\\
\Delta(M_{a,b})&=M_{a,b}\otimes 1+M_a\otimes M_b+1\otimes M_{a,b},\\
\Delta(M_{a,b,c})&=M_{a,b,c}\otimes 1+M_{a,b}\otimes M_c+M_a\otimes M_{b,c}+1\otimes M_{a,b,c}.
\end{align*}
For any composition $w$, we denote by $|w|$ the sum of its letters; this induces a connected graduation of $\QSym$.
There exists a second coproduct $\delta$, such that for any composition $w$ of length $n$:
\begin{align*}
\delta(M_w)&=\sum_{k=1}^n \sum_{w=w_1\ldots w_k}M_{|w_1|\ldots |w_k|}\otimes M_{w_1}\ldots M_{w_k}.
\end{align*}
For example, if $a,b,c\in \N_{>0}$:
\begin{align*}
\delta(M_a)&=M_a\otimes M_a,\\
\delta(M_{a,b})&=M_{a,b}\otimes M_aM_b+M_{a+b}\otimes M_{a,b},\\
\delta(M_{a,b,c})&=M_{a,b,c}\otimes M_aM_bM_c+M_{a+b,c}\otimes M_{a,b}M_c+M_{a,b+c}\otimes M_aM_{b,c}
+M_{a+b+c}\otimes M_{a,b,c}.
\end{align*}
The counit of this coproduct is denoted by $\varepsilon'$; for any composition $u$,
\begin{align*}
\varepsilon'(M_u)&=\begin{cases}
1\mbox{ if $u$ has only one letter},\\
0\mbox{ otherwise}.
\end{cases}
\end{align*}

Moreover, $\QSym$ admits a polynomial representation. Let $\X$ be a totally ordered alphabet -- that is to say a set with a total order.
For any  $u_1\ldots u_n\in \N_{>0}$, we consider the element:
\[\rep_\X(M_{u_1,\ldots,u_n})=\sum_{x_1<\ldots<x_n \mbox{\scriptsize{ in  }}\X} x_1^{u_1}\ldots x_n^{u_n} \in \Q[[\X]].\]
We define in this way an algebra morphism $\rep_\X:\QSym\longrightarrow \Q[[\X]]$. 
Moreover, for any $k\in \N$, the restriction of $\rep_\X$ to the $k$-th homogeneous component $\QSym_k$ of $\QSym$ is injective if,
 and only if, $|\X|\geq k$.\\

If $\X$ and $\Y$ are two totally ordered alphabets, $\X \sqcup \Y$ is also totally ordered: for all $x\in \X$, $y\in \Y$, $x\leq y$. 
We identify $\Q[[\X\sqcup \Y]]$ with $\Q[[\X]]\otimes \Q[[\Y]]$, via the continous morphism sending
$x\in \X$ to $x\otimes 1$ and $y\in \Y$ to $1\otimes y$.  Then:
\[\rep_{\X\sqcup \Y}=(\rep_\X\otimes \rep_\Y)\circ \Delta.\]
The cartesian product $\X\times \Y$ is totally ordered by the lexicographic order:
for any $x,x'\in \X$, $y,y' \in \Y$, $xy\leq x'y'$ if, and only if, $(x<x')$ or ($x=x'$ and $y\leq y'$).
We identify $\Q[[\X\times\Y]]$ with a subring of $\Q[[\X]]\otimes \Q[[\Y]]$ through the continous morphism
sending $(x,y)\in \X\times \Y$ to $x\otimes y$. Then:
\[\rep_{\X\times \Y}=(\rep_\X\otimes \rep_\Y)\circ \delta.\]
Let us prove the associativity of $\Delta$ and of $\delta$ and the cointeraction with the help of these polynomial representations.
We choose $X$, $Y$ and $Z$ three infinite totally alphabets. Firstly, observe that, as totally ordered alphabets:
\[(X\sqcup Y)\sqcup Z=X\sqcup (Y\sqcup Z).\]
Therefore:
\begin{align*}
(\rep_X \otimes \rep_Y\otimes \rep_Z)\circ (\Delta \otimes \id)\circ \Delta
&=\rep_{(X\sqcup Y)\sqcup Z}\\
&=\rep_{X\sqcup (Y\sqcup Z)}\\
&=(\rep_X \otimes \rep_Y\otimes \rep_Z)\circ (\id \otimes \Delta)\circ \Delta.
\end{align*}
As $\rep_X$, $\rep_Y$ and $\rep_Z$ are injective, $(\Delta \otimes \id)\circ \Delta=(\id \otimes \Delta)\circ \Delta$. 
Secondly, observe that, as totally ordered alphabets:
\[(X\times Y)\times Z=X\times (Y\times Z).\]
Therefore:
\begin{align*}
(\rep_X \otimes \rep_Y\otimes \rep_Z)\circ (\delta \otimes \id)\circ \delta
&=\rep_{(X\times Y)\times Z}\\
&=\rep_{X\times (Y\times Z)}\\
&=(\rep_X \otimes \rep_Y\otimes \rep_Z)\circ (\id \otimes \delta)\circ \delta.
\end{align*}
Hence, $(\delta \otimes \id)\circ \delta=(\id \otimes \delta)\circ \delta$. Finally, as totally ordered alphabets\footnote{
but $X\times (Y\sqcup Z)\neq (X\times Y)\sqcup (X\times Z)$ in general.}:
\[(X\sqcup Y)\times Z=(X\times Z)\sqcup (Y\times Z).\]
Therefore:
\begin{align*}
(\rep_X \otimes \rep_Y\otimes \rep_Z)\circ m_{1,3,24}\circ (\delta \otimes \delta)\circ \Delta
&=(\rep_X \otimes \rep_Z\otimes  \rep_Y\otimes \rep_Z)\circ(\delta \otimes \delta)\circ \Delta\\
&=\rep_{(X\times Z)\sqcup (Y\times Z)}\\
&=\rep_{(X\sqcup Y)\times Z}\\
&=(\rep_X \otimes \rep_Y\otimes \rep_Z)\circ (\Delta \otimes \id)\circ \delta.
\end{align*}
Hence, $m_{1,3,24}\circ (\delta \otimes \delta)\circ \Delta=(\Delta \otimes \id)\circ \delta$. We obtain:

\begin{prop}
With the coaction $\delta$, $(\QSym,m,\Delta)$  and $(\QSym,m,\delta)$ are in cointeraction.
\end{prop}

The Hopf algebra $\QSym$ contains the cocommutative Hopf subalgebra $\Sym$ of symmetric functions; 
this subalgebra is linearly generated by the elements:
\[M_{\{u_1,\ldots,u_k\}}=\sum_{\sigma\in \mathfrak{S}_k}M_{u_{\sigma(1)},\ldots u_{\sigma(k)}},\]
where $k\geqslant 1$ and $u_1,\ldots,u_k\in \N_{>0}$. 
Let us apply the results of \cite{FoissyEhrhart} to $\QSym$.

\begin{prop} \label{prop25}
For any $k\geq 0$, we denote by $H_k$ the $k$-th Hilbert polynomial:
\[H_k(X)=\frac{X(X-1)\ldots (X-k+1)}{k!}.\]
Let us consider the map:
\[H:\left\{\begin{array}{rlc}
\QSym&\longrightarrow&\Q[X]\\
M_{u_1\ldots u_k}&\longrightarrow&H_k.
\end{array}\right.\]
Then $H$ is the unique morphism from $\QSym$ to $\Q[X]$ compatible with $m$, $\Delta$ and $\delta$.
\end{prop}

\begin{proof} By \cite{FoissyEhrhart}, such a morphism exists and is unique. 
Let us prove that $H$ is indeed compatible with $m$, $\Delta$ and $\delta$.
For any finite totally ordered alphabet $\X$, of cardinality $k$, for any $a\in \QSym$, by definition of the polynomial representation of $\QSym$:
\begin{align*}
H(a)(k)&=\rep_\X(a)_{\mid \forall x\in \X, \: x=1}.
\end{align*}
If $a,b \in \QSym$, for any $k\geq 1$, if $\X$ is a totally ordered alphabet of cardinality $k$:
\begin{align*}
H(ab)(k)&=\rep_\X(ab)_{\mid \forall x\in \X, \: x=1}=\rep_\X(a)_{\mid \forall x\in \X, \: x=1}\rep_\X(b)_{\mid \forall x\in \X, \: x=1}=H(a)(k)H(b)(k).
\end{align*}
Hence, $H(ab)=H(a)H(b)$. 
If $a\in \QSym$, for any $k,l\geq 1$, choosing totally ordered alphabets $\X$ and $\Y$ of respective cardinality $k$ and $l$:
\begin{align*}
&\Delta\circ H(a)(k,l)	&&\delta\circ H(a)(k,l)	\\
&=H(a)(k+l)&&=H(a)(kl)\\
&=\rep_{\X\sqcup \Y}(a)_{\mid \forall x\in \X\sqcup \Y,\: x=1}&
&=\rep_{\X\times \Y}(a)_{\mid \forall x\in \X\sqcup \Y,\: x=1}\\
&=(\rep_\X \otimes \rep_\Y)\circ \Delta(a)_{\mid \forall x\in \X\sqcup \Y,\: x=1}&
&=(\rep_\X \otimes \rep_\Y)\circ \delta(a)_{\mid \forall x\in \X\sqcup \Y,\: x=1}\\
&=(H\otimes H)\circ \Delta(a)(k,l);&
&=(H\otimes H)\circ \delta(a)(k,l).
\end{align*}
Hence, $\Delta \circ H=(H\otimes H)\circ \Delta$ and  $\delta \circ H=(H\otimes H)\circ \delta$. \end{proof}

\subsection{Cointeraction and quasi-symmetric functions}

The following result is proved by Aguiar and Bergeron in \cite{Aguiar2}. 
It states that $\QSym$ is a terminal object in a suitable category of combinatorial Hopf algebras:

\begin{theo}\label{theo26} 
Let $(A,m,\Delta)$ be a graded, connected Hopf algebra, and $\alpha$ be a character on $A$. 
There exists a unique homogeneous Hopf algebra morphism
$\Phi_\alpha:(A,m,\Delta)\longrightarrow (\QSym,m,\Delta)$, such that $\alpha=\varepsilon'\circ \Phi_\alpha$. For any $a\in A$:
\[\Phi_\alpha(a)=\varepsilon(a)1+\sum_{k=1}^\infty \sum_{u_1,\ldots,u_k>0}\alpha^{\otimes k}\circ (\pi_{u_1}\otimes \ldots \otimes \pi_{u_k})\circ \Delta^{(k-1)}(a)
M_{u_1,\ldots,u_k},\]
where, for any $j\geq 1$, $\pi_j$ is the canonical projection on the $j$-th homogeneous component $A_j$ of $A$.
\end{theo}

\begin{theo}\label{theo27}
Let $(A,m,\Delta)$ and $(A,m,\delta)$ be cointeracting bialgebras, such that $(A,m,\Delta)$ is a graded connected Hopf algebra. 
We denote by $\varepsilon'$ the counit of the coalgebra $(A,\delta)$. 
\begin{enumerate}
\item There exists a morphism $\Phi_1:A\longrightarrow \QSym$ such that:
\begin{enumerate}
\item $\Phi_1:(A,m,\Delta)\longrightarrow (\QSym,m,\Delta)$ is a homogeneous morphism of Hopf algebras,
\item  $\Phi_1:(A,m,\delta)\longrightarrow (\QSym,m,\delta)$ is a morphism of bialgebras,
\end{enumerate}
if, and only if:
\begin{align*}
&\forall n\in \N,&\delta(A_n)\subseteq A_n \otimes A+\ker(\Phi_{\varepsilon'})\otimes A+A\otimes \ker(\Phi_{\varepsilon'}).
\end{align*}
Moreover, if this holds, then $\Phi_1=\Phi_{\varepsilon'}$, and the unique morphism $\phi_1:A\longrightarrow K[X]$
given by Theorem \ref{theo8} is $\Phi_1\circ H$.
\item  If:
\begin{align*}
&\forall n\in \N,&\delta(A_n)\subseteq A_n \otimes A+\ker(\Phi_{\varepsilon'})\otimes A,
\end{align*}
then for any character $\alpha$ on $A$, $\Phi_\alpha=\Phi_{\varepsilon'}\leftarrow \alpha$.
\end{enumerate}\end{theo}

\begin{proof}
1. \textit{Unicity}.  If $\Phi_1$ is such a morphism, then $\varepsilon'\circ \Phi_1=\varepsilon'$. By Theorem \ref{theo26},
$\Phi_1=\Phi_{\varepsilon'}$. From now, we put $\Phi_1=\Phi_{\varepsilon'}$.\\

\textit{Existence}, $\Longrightarrow$.  Let us assume that 
$\delta \circ \Phi_1=(\Phi_1\otimes \Phi_1)\circ \delta$. Let $x\in A_n$. Let us put
\[\delta(x)=\sum_{i=0}^\infty \sum_j x_{i,j}\otimes y_{i,j},\]
where $x_{i,j}\in A_i$ for any $(i,j)$.
As $\Phi_1$ is homogeneous, $\Phi_1(x)\in \QSym_n$. By definition of the coproduct $\delta$ of $\QSym$,
$\delta \circ \Phi_1(x)\in \QSym_n\otimes \QSym_n$. Hence:
\[(\Phi_1\otimes \Phi_1)\circ \delta(x)=\sum_{i=0}^\infty \sum_j \underbrace{\Phi_1(x_{i,j})}_{\in \QSym_i}\otimes
\Phi_1(y_{i,j})\in \QSym_n \otimes \QSym_n.\]
Hence:
\[i\neq n\Longrightarrow \sum_j \Phi_1(x_{i,j})\otimes \Phi_1(y_{i,j})=0.\]
So:
\[i\neq n\Longrightarrow \sum_j x_{i,j}\otimes y_{i,j} \in \ker(\Phi_1\otimes \Phi_1)=\ker(\Phi_1)\otimes A+A\otimes \ker(\Phi_1),\]
and finally $x\in A_n \otimes A+\ker(\Phi_1)\otimes A+A\otimes \ker(\Phi_1)$.\\

\textit{Existence}, $\Longleftarrow$. 
We shall use the polynomial representation of $\QSym$.
If $\X,\Y$ are totally ordered alphabets, as $\Phi_1$ is compatible with $\Delta$:
\begin{align*}
\rep_{\X\sqcup \Y}\circ\Phi_1&=(\rep_\X\otimes \rep_\Y)\circ \Delta\circ \Phi_1
=(\rep_\X\otimes \rep_\Y)\circ(\Phi_1\otimes \Phi_1)\circ \Delta.
\end{align*}

Let us prove that for any finite totally ordered alphabet $\X$, for any totally ordered alphabet $\Y$:
\[\rep_{\X\times \Y}\circ \Phi_1=(\rep_\X\otimes \rep_\Y)\circ (\Phi_1\otimes \Phi_1)\circ \delta.\]
We proceed by induction on $n=|\X|$. If $n=1$, we put $X=\{x\}$. 
Let $a\in A_k$, with $k\in \N$. By the hypothesis on $A$:
\[(\Phi_1\otimes \Phi_1)\circ \delta(a)\in  \QSym_k \otimes \QSym.\]
Therefore:
\begin{align*}
(\rep_\X\otimes \rep_\Y)(\Phi_1\otimes \Phi_1)\circ \delta(a)
&=x^k(\varepsilon' \otimes \rep_\Y)\circ (\Phi_1\otimes \Phi_1)\circ \delta(a)\\
&=x^k(\varepsilon' \otimes \rep_\Y\circ \Phi_1)\circ \delta(a)\\
&=x^k \underbrace{(\id_\Q \otimes \rep_\Y\circ \Phi_1)}_{=\rep_\Y\circ \Phi_1}
\circ \underbrace{(\varepsilon'\otimes \id)\circ \delta}_{=\id_A}(a)\\
&=x^k \rep_\Y\circ \Phi_1(a)\\
&=\rep_{\X\times \Y}\circ \Phi_1(a).
\end{align*}
Let us assume that the results holds for any totally ordered alphabet $\X'$ such that $|\X'|<|\X|$, with $|\X|\geq 2$. 
Let $x_n$ be the maximal element of $\X$. We put $\X'=\X\setminus \{x_n\}$ and $\X''=\{x_n\}$, such that $\X=\X'\sqcup \X''$. Then:
\[\X\times \Y=(\X'\sqcup \X'')\times \Y=(\X'\times \Y)\sqcup (\X''\times \Y),\]
so:
\begin{align*}
\rep_{\X\times \Y}\circ \Phi_1&=\rep_{(\X'\times \Y)\sqcup (\X''\times \Y)}\circ\Phi_1\\
&=(\rep_{\X'\times \Y}\otimes \rep_{\X''\times \Y}) \circ (\Phi_1\otimes \Phi_1)\circ \Delta\\
&=(\rep_{\X'}\otimes \rep_\Y\otimes \rep_{\X''}\otimes \rep_\Y)\circ 
(\Phi_1\otimes \Phi_1\otimes \Phi_1\otimes \Phi_1)\circ (\delta \otimes \delta)\circ \Delta\\
&=(\rep_{\X'}\otimes \rep_{\X''}\otimes \rep_\Y)\circ m_{1,3,24}\circ (\Phi_1\otimes \Phi_1\otimes \Phi_1\otimes \Phi_1)\circ (\delta \otimes \delta)\circ \Delta\\
&=(\rep_{\X'}\otimes \rep_{\X''}\otimes \rep_\Y)\circ(\Phi_1\otimes \Phi_1\otimes \Phi_1)\circ m_{1,3,24}\circ (\delta \otimes \delta)\circ \Delta\\
&=(\rep_{\X'}\otimes \rep_{\X''}\otimes \rep_\Y)\circ(\Phi_1\otimes \Phi_1\otimes \Phi_1)\circ (\Delta \otimes \id)\circ \delta\\
&=(\rep_{\X'\sqcup \X''}\otimes \rep_\Y)\circ (\Phi_1\otimes \Phi_1)\circ \delta\\
&=(\rep_\X\otimes \rep_\Y)\circ (\Phi_1\otimes \Phi_1)\circ \delta.
\end{align*}
Let $a\in A$. Let us choose a totally ordered alphabet $X$ of cardinality $n$ such that:
\[\delta(a)\in \bigoplus_{k,l\leq n} A_k\otimes A_l.\]
Then:
\[\rep_{\X \times \X}\circ \Phi_1(a)=(\rep_\X\otimes \rep_\X)\circ\delta\circ \Phi_1(a)=(\rep_\X\otimes \rep_\X)\circ(\Phi_1\otimes \Phi_1)\circ \delta(a).\]
By injectivity of $\rep_\X$  till degree $n$, as $|X|\geq n$, $\delta \circ \Phi_1(a)=(\Phi_1 \otimes \Phi_1)\circ \delta(a)$.\\

The morphism $\Phi_1\circ H:A\longrightarrow\QSym$ is compatible with both bialgebraic structures by composition.
By unicity in Theorem \ref{theo8}, it is equal to $\phi_1$. \\

2. Let $a\in A_n$. Then by hypothesis, $\Phi_1\leftarrow \alpha(a)=(\Phi_1\otimes \alpha)\circ \delta(a)\in \QSym_n$,
so $\Phi_1\leftarrow \alpha$ is a homogeneous Hopf algebra morphism. Moreover:
\[\varepsilon'\circ (\Phi_1\leftarrow \alpha)=(\varepsilon'\circ \Phi_1 \otimes \alpha) \circ \delta
=(\varepsilon' \otimes \alpha)\circ \delta=\alpha \circ (\varepsilon' \otimes \id)\circ \delta=\alpha,\]
so $(\Phi_1\leftarrow \alpha)=\Phi_\alpha$. \end{proof}

\subsection{Double morphisms from graphs to quasisymmetric functions}

\begin{notation}
For any graph $G\in \gr(\calD)$, for any $f\in \PC(G)$ and for any $i\in [\max(f)]$, we put:
\begin{align*}
\wt(f^{-1}(i))&=\sum_{\mbox{\scriptsize $G$ connected component of $G_{\mid f^{-1}(i)}$}}
\wt\left(\sum_{x\in V(G)} d(x)\right);\\ \\
M_f&=M_{\wt(f^{-1}(1))\ldots \wt(f^{-1}\max(f))}\in \QSym.
\end{align*}
In other words, $f^{-1}(i)$ is the sum of the weights of the connected components of the subgraph of $G$ 
which vertices are the vertices of $G$ colored by $i$.
\end{notation}

\begin{remark}
If $\wt:(\calD,+)\longrightarrow (\N_{>0},+)$ is a semigroup morphism, this simplify:
\[\wt(f^{-1}(i))=\wt\left(\sum_{x\in V(G),\: f(x)=i} d(x)\right). \]
\end{remark}

\begin{prop}
Let $\calD$ be a nonempty set. We define $F_{chr}^{(\calD)}:\hgrd\longrightarrow\QSym$ by:
\[F_{chr}^{(\calD)}:\left\{\begin{array}{rcl}
\hgrd&\longrightarrow&\QSym\\
G\in \gr(\calD)&\longrightarrow&\displaystyle \sum_{f\in \PVC(G)} M_{\wt(f^{-1}(1))\ldots \wt(f^{-1}(\max(f)))}.
\end{array}\right.\]
Then $F_{chr}^{(\calD)}$ is a Hopf algebra morphism, equal to $\phi_{\varepsilon'}$. 
\end{prop}

\begin{proof}
Let us apply Theorem \ref{theo26} in order to describe $\Phi_{\varepsilon'}$: for any nonempty
$G\in \gr(\calD)$,
\begin{align*}
\Phi_{\varepsilon'}(G)&=\sum_{k=1}^\infty \sum_{u_1,\ldots,u_k>0}\varepsilon'^{\otimes k}
\circ (\pi_{u_1}\otimes \ldots \otimes \pi_{u_k})\circ \Delta^{(k-1)}(G)M_{u_1,\ldots,u_k}\\
&=\sum_{k=1}^\infty \sum_{u_1,\ldots,u_k>0}
\sum_{V(G)=I_1\sqcup \ldots \sqcup I_k}
\varepsilon' \circ \pi_{u_1}(G_{\mid I_1})\ldots \varepsilon' \circ \pi_{u_k}(G_{\mid I_k})
M_{u_1,\ldots,u_k}\\
 &=\sum_{k=1}^\infty \sum_{V(G)=I_1\sqcup \ldots \sqcup I_k}
\varepsilon' (G_{\mid I_1})\otimes \ldots \otimes  \varepsilon' (G_{\mid I_k})
M_{\wt(G_{\mid I_1}),\ldots,\wt(G_{\mid I_k})}.
\end{align*}
Moreover, for any graph $H$, $\varepsilon'(H)=1$ if $H$ is totally disconnected and $0$ otherwise. Hence:
\begin{align*}
\Phi_{\varepsilon'}(G)&=\sum_{k=1}^\infty \sum_{\substack{V(G)=I_1\sqcup \ldots \sqcup I_k,\\ 
 \forall i\in [k], \: G_{\mid I_i}\mbox{\scriptsize totally disconected}}} M_{\wt(G_{\mid I_1}),\ldots,\wt(G_{\mid I_k})}\\
 &= \sum_{f\in \PVC(G)} M_{\wt(f^{-1}(1))\ldots \wt(f^{-1}(\max(f)))}\\
 &=F_{chr}^{(\calD)}(G).\qedhere 
\end{align*} \end{proof}

\begin{example}
Let $a,b,c\in \calD$.
\begin{align*}
F_{chr}^{(\calD)}(\grdun{$a$})&=M_{\wt(a)},\\
F_{chr}^{(\calD)}(\grddeux{$a$}{$b$})&=M_{\wt(a),\wt(b)}+M_{\wt(b),\wt(a)},\\
F_{chr}^{(\calD)}(\grdtroisun{$a$}{$b$}{$c$})&=M_{\wt(a),\wt(b),\wt(c)}+M_{\wt(a),\wt(c),\wt(b)}+M_{\wt(b),\wt(a),\wt(c)}\\
&+M_{\wt(b),\wt(c),\wt(a)}+M_{\wt(c),\wt(a),\wt(b)}+M_{\wt(c),\wt(b),\wt(a)},\\
F_{chr}^{(\calD)}(\grdtroisdeux{$a$}{$b$}{$c$})&=M_{\wt(a),\wt(b),\wt(c)}+M_{\wt(a),\wt(c),\wt(b)}+M_{\wt(b),\wt(a),\wt(c)}\\
&+M_{\wt(b),\wt(c),\wt(a)}+M_{\wt(c),\wt(a),\wt(b)}+M_{\wt(c),\wt(b),\wt(a)}\\
&+M_{\wt(a),\wt(b)+\wt(c)}+M_{\wt(b)+\wt(c),\wt(a)}.
\end{align*}
In the nondecorated case, this simplifies:
\begin{align*}
F_{chr}(\grun)&=M_1,&F_{chr}(\grtroisun)&=6M_{1,1,1},\\
F_{chr}(\grdeux)&=2M_{1,1},&F_{chr}(\grtroisdeux)&=6M_{1,1,1}+M_{1,2}+M_{2,1}.
\end{align*}
For any graph $G$, $F_{chr}(G)$ is the chromatic symmetric function of \cite{Stanley2},
when realized with the totally ordered alphabet $X=\{x_1<x_2<\ldots\}$. For example:
\begin{align*}
\rep_X\circ F_{chr}(\grun)&=\sum_{i=1}^\infty x_i,\\
\rep_X\circ F_{chr}(\grdeux)&=\sum_{\substack{i,j\geqslant 1\\ i\neq j}} x_i x_j,\\
\rep_X\circ F_{chr}(\grtroisun)&=\sum_{\substack{i,j,k\geqslant 1,\\ i\neq j,\\ i \neq k,\\ j\neq k}} x_i x_j x_k,\\
\rep_X\circ F_{chr}(\grtroisdeux)&=\sum_{\substack{i,j,k\geqslant 1,\\ i\neq j,\\ i \neq k}} x_i x_j x_k
=\sum_{\substack{i,j,k\geqslant 1,\\ i\neq j,\\ i \neq k,\\ j\neq k}} x_i x_j x_k
+\sum_{\substack{i,j\geqslant 1,\\ i\neq j}} x_i x_j^2.
\end{align*}
\end{example}

We now fix an abelian semigroup $(\calD,+)$ and a map $\wt:\calD\longrightarrow \N_{>0}$, inducing a graduation
on $\hgrd$. 

\begin{theo}\label{theo28}
There exists a morphism $\Phi_1:\hgrd\longrightarrow \QSym$ such that:
\begin{enumerate}
\item $\Phi_1:(\hgrd,m,\Delta)\longrightarrow (\QSym,m,\Delta)$ is a homogeneous morphism of Hopf algebras,
\item  $\Phi_1:(\hgrd,m,\delta)\longrightarrow (\QSym,m,\delta)$ is a morphism of bialgebras,
\end{enumerate}
if, and only if, $\wt:(\calD,+)\longrightarrow (\N_{>0},+)$ is a semigroup morphism. If so, $\Phi_1=F_{chr}^{(\calD)}$
(and therefore $\Phi_1$ is unique).
\end{theo}

\begin{proof}
$\Longrightarrow$. By Theorem \ref{theo27}, $\varepsilon' \circ \Phi_1=\varepsilon'$, 
so necessarily $\Phi_1=\Phi_{\varepsilon'}=F_{chr}^{(\calD)}$.
Let $a,b\in \calD$. Then:
\begin{align*}
\delta \circ \Phi_1(\grddeux{$a$}{$b$})&=(M_{\wt(a),\wt(b)}+M_{\wt(b),\wt(a)})\otimes M_{\wt(a)}M_{\wt(b)}\\
&+M_{\wt(a)+\wt(b)}\otimes (M_{\wt(a),\wt(b)}+M_{\wt(b),\wt(a)}),\\
=(\Phi_1\otimes \Phi_1)\circ \delta(\grddeux{$a$}{$b$})&=(\Phi_1\otimes \Phi_1)
(\grddeux{$a$}{$b$}\otimes \grdun{$a$}\grdun{$b$}+\grdunbis{$a+b$}\otimes \grddeux{$a$}{$b$})\\
&=(M_{\wt(a),\wt(b)}+M_{\wt(b),\wt(a)})\otimes M_{\wt(a)}M_{\wt(b)}\\
&+M_{\wt(a+b)}\otimes (M_{\wt(a),\wt(b)}+M_{\wt(b),\wt(a)}).
\end{align*}
Comparing, we obtain $\wt(a)+\wt(b)=\wt(a+b)$, so $\wt$ is a semigroup morphism.\\

$\Longleftarrow$. Let us assume that $\wt$ is a semigroup morphism. Let $G\in \gr(\calD)$ and $\sim\triangleleft G$. 
Then, obviously, $\wt(G\mid \sim)=\wt(G)$ and:
\begin{align*}
\wt(G/\sim)&=\sum_{c \in V(G/\sim)} \wt(d(c))\\
&=\sum_{c \in V(G/\sim)} \wt\left(\sum_{x\in c} d(x)\right)\\
&=\sum_{c\in V(G/\sim)}\sum_{x\in c} \wt(d(x))\\
&=\sum_{x\in V(G)} \wt(d(x))\\
&=\wt(G).
\end{align*}
Hence, for any $n\in \N$, $\delta((\hgrd)_n)\subseteq (\hgrd)_n\otimes (\hgrd)_n$. By Theorem \ref{theo27}-2,
$\Phi_{\varepsilon'}$ is a morphism for both bialgebraic structures. \end{proof}

\begin{example}
As a consequence, in the nondecorated case, $F_{chr}$ is not compatible with $\delta$. Indeed, for example:
\begin{align*}
&\delta \circ F_{chr}(\grdeux)&&(F_{chr}\otimes F_{chr})\circ \delta(\grdeux)\\
&=2\delta(M_{1,1})&&=(F_{chr}\otimes F_{chr})(\grdeux\otimes \grun \grun+\grun\otimes \grdeux)\\
&=2(M_{1,1}\otimes M_1M_1+\textcolor{red}{M_2} \otimes M_{1,1}),
&&=2(M_{1,1}\otimes M_1M_1+\textcolor{red}{M_1}\otimes M_{1,1}).
\end{align*}
On the other side, if $(\calD,+)=(\N_{>0},+)$ and $\wt=\id_{\N_{>0}}$, then $F_{chr}^{\calD)}$ is compatible with $\delta$:
\begin{align*}
&\delta \circ F_{chr}(\grddeux{$1$}{$1$})&&(F_{chr}\otimes F_{chr})\circ \delta(\grddeux{$1$}{$1$})\\
&=2\delta(M_{1,1})&&=(F_{chr}\otimes F_{chr})
(\grddeux{$1$}{$1$}\otimes \grdun{$1$} \grdun{$1$}+\grdun{$2$}\otimes \grddeux{$1$}{$1$})\\
&=2(M_{1,1}\otimes M_1M_1+M_2 \otimes M_{1,1}),&
&=2(M_{1,1}\otimes M_1M_1+M_2\otimes M_{1,1}).
\end{align*}\end{example}

\begin{prop}
The image of $F_{chr}^{(\calD)}$ is included in $\Sym$. It is equal to $\Sym$, if and only if,
there exists $a\in \calD$,  such that $\wt(a)=1$.  
\end{prop}

\begin{proof}
As $\hgrd$ is cocommutative, $F_{chr}^{(\calD)}(\hgrd)$ is a cocommutative Hopf subalgebra of $\QSym$, 
so is included in $\Sym$, greatest cocommutative subalgebra of $\QSym$. 

If $1\notin \wt(\calD)$, then there is no element of $\hgrd$ homogeneous of degree $1$.
As $F_{chr}^{(\calD)}$ is homogeneous, there is no element $x\in \hgrd$ such that $\Phi_1(x)=M_1$. \\

If $\wt(a)=1$, let us consider the complete graph $G_n$ with $n$ vertices, all decorated by $a$.
By definition of $F_{chr}^{(\calD)}$, $F_{chr}^{(\calD)}(G_n)=n! M_{1^n}$, so for any $n$, $M_{1^n}\in \Phi_1(\hgrd)$.
As these elements (which are the elementary symmetric functions) generate $\Sym$,
$\Phi_1(\hgrd)=\Sym$. \end{proof}

\subsection{Extension of $\phi_0$}

\begin{prop} \label{prop30}
Let $G$ be a graph and $f\in \PC(G)$.
 We define the equivalence $\sim_f$ in $V(G)$ as the unique one which classes are the connected components of 
the subsets $f^{-1}(x)$, $x\in [\max(f)]$. Then, the coloring $f$ induces a packed valid coloring $\overline{f}$ of $G/\sim_f$:
\begin{align*}
&\forall x\in V(G),&\overline{f}(\overline{x})&=f(x).
\end{align*}
\end{prop}

\begin{proof} We have to prove that $\overline{f}$ is a valid coloring of $G/\sim_f$. Let $\overline{x}$, $\overline{y}$ be two vertices of $G/\sim_f$,
related by an edge (this implies that they are different); we assume that $\overline{f}(\overline{x})=\overline{f}(\overline{y})$.
There exist $x',y'\in V(G)$, such that $x'\sim_f x$ and $y'\sim_f y$, and $x'$, $y'$ are related by an edge in $G$.
By definition of $\sim_f$, there exist vertices $x'=x_1,\ldots,x_k=x$, $y=y_1,\ldots,y_l=y'$ in $G$ such that $f(x_1)=\ldots=f(x_k)$,
$g(y_1)=\ldots=g(y_l)$, and for all $p$, $q$, $x_p$ and $x_{p+1}$, $y_q$ and $y_{q+1}$ are related by an edge in $G$.
Hence, there is a path in $G$ from $x$ to $y$, such that for any vertex $z$ on this path, $f(z)=f(x)=f(y)$: this implies that $x\sim_f y$,
so $\overline{x}=\overline{y}$. This is a contradiction, so $\overline{f}$ is valid. \end{proof}

\begin{prop} \label{prop31}
Let us consider the following map:
\[F_0^{(\calD)}:\left\{\begin{array}{rcl}
\hgrd&\longrightarrow&\QSym\\
G&\longrightarrow&\displaystyle \sum_{f\in \PC(G)} M_f.
\end{array}\right.\]
This is a Hopf algebra morphism, and $F_0^{(\calD)}\circ H=\phi_0^{(\calD)}$.
It is homogeneous if, and only if, $\wt:(\calD,+)\longrightarrow(\N_{>0},+)$ is a semigroup morphism.
 Moreover, in $E_{\hgrd\rightarrow \QSym}$:
\[F_{chr}^{(\calD)}=F_0^{(\calD)}\leftarrow \lambda_{chr}^{(\calD)}.\]
\end{prop}

\begin{proof} Let $G$ be graph. By Proposition \ref{prop30}, we have a map:
\[\theta:\left\{
\begin{array}{rcl}
\PC(G)&\longrightarrow&\displaystyle \bigsqcup_{\sim\triangleleft G} \PVC(G/\sim)\\
f&\longrightarrow&\overline{f}\in \PVC(G/\sim_f).
\end{array}\right.\]
$\theta$ is injective: if $\theta(f)=\theta(g)$, then $\sim_f=\sim_g$ and for any $x\in V(G)$,
\[f(x)=\overline{f}(\overline{x})=\overline{g}(\overline{x})=g(x).\]
Let us show that $\theta$ is surjective. Let $\overline{f}\in \PVC(G/\sim)$,
with $\sim \triangleleft G$. We define $f\in \PC(G)$ by $f(x)=\overline{f}(\overline{x})$ for any vertex $x$.
By definition of $f$, the equivalence classes of $\sim$ are included in sets $f^{-1}(i)$, and are connected, as $\sim\triangleleft G$,
so are included in equivalence classes of $\sim_f$: if $x\sim y$, then $x\sim_f y$. Let us assume that $x\sim_f y$. 
There exists a path $x=x_1,\ldots,x_k=y$ in $G$, such that $f(x_1)=\ldots=f(x_k)$. So $\overline{f}(\overline{x_1})=\ldots=\overline{f}(\overline{x_k})$.
As $\overline{f}$ is a valid coloring of $G/\sim$, there is no edge between $\overline{x_p}$ and $\overline{x_{p+1}}$ in $G/\sim$ for any $p$;
this implies that $\overline{x_p}=\overline{x_{p+1}}$ for any $p$, so $x=x_1\sim x_k=y$. Finally, $\sim=\sim_f$, so $\theta(f)=\overline{f}$.\\

Using the bijection $\theta$, we obtain:
\begin{align*}
F_0^{(\calD)}(G)&=\sum_{f\in \PC(G)} M_f\\
&=\sum_{\sim \triangleleft G} \sum_{\overline{f} \in \PVC(G/\sim)} M_{\overline{f}}\\
&=\sum_{\sim \triangleleft G} F_{chr}^{(\calD)}(G/\sim)\\
&=\sum_{\sim \triangleleft G} F_{chr}^{(\calD)}(G/\sim)\lambda_0^{(\calD)}(G\mid \sim)\\
&=\left(F_{chr}^{(\calD)}\leftarrow \lambda_0^{(\calD)}\right)(G).
\end{align*}
Therefore, $F_0^{(\calD)}=F_{chr}^{(\calD)}\leftarrow \lambda_0^{(\calD)}$, 
or equivalently  $F_{chr}^{(\calD)}=F_0^{(\calD)}\leftarrow \lambda_{chr}^{(\calD)}$.
As a consequence, $F_0^{(\calD)}$  is a Hopf algebra morphism, taking its values in $\Sym$.
Hence:
\begin{align*}
H\circ F_0^{(\calD)}&=H\circ(F_{chr}^{(\calD)}\leftarrow \lambda_0^{(\calD)})
=(H\circ F_{chr}^{(\calD)})\leftarrow \lambda_0^{(\calD)}=P_{chr}\leftarrow \lambda_0^{(\calD)}=\phi_0^{(\calD)}.
\end{align*}

Let us assume that $F_0^{(\calD)}$ is homogeneous. For any $a,b\in \calD$,
$F_0^{(\calD)}(\grddeux{$a$}{$b$})$ is homogeneous of degree $\wt(a)+\wt(b)$, so
$M_{\wt(a+b)}$ is homogeneous of degree $\wt(a)+\wt(b)$. Hence, $\wt(a+b)=\wt(a)+\wt(b)$, and $\wt$ is a semigroup morphism.
Conversely, if $G\in \gr(\calD)$, any term appearing in $F_0^{(\calD)}(G)$ is of degree
\[\sum_{x\in V(G)}\wt(d(x))=\wt(G),\]
so $F_0^{(\calD)}$ is homogeneous.  \end{proof}

\begin{example} Let $a,b,c\in \calD$. 
\begin{align*}
F_0^{(\calD)}(\grdun{$a$})&=M_{\wt(a)},\\
F_0^{(\calD)}(\grddeux{$a$}{$b$})&=M_{\wt(a),\wt(b)}+M_{\wt(b),\wt(a)}+M_{\wt(a+b)},\\
F_0^{(\calD)}(\grdtroisdeux{$a$}{$b$}{$c$})&=M_{\wt(a),\wt(b),\wt(c)}+M_{\wt(a),\wt(c),\wt(b)}+M_{\wt(b),\wt(a),\wt(c)}\\
&+M_{\wt(b),\wt(c),\wt(a)}+M_{\wt(c),\wt(a),\wt(b)}+M_{\wt(c),\wt(b),\wt(a)}\\
&+M_{\wt(a+b),\wt(c)}+M_{\wt(a+c),\wt(b)}+M_{\wt(b)+\wt(c),\wt(a)}\\
&+M_{\wt(c),\wt(a+b)}+M_{\wt(b),\wt(a+c)}+M_{\wt(a),\wt(b)+\wt(c)}+M_{\wt(a+b+c)},\\
F_0^{(\calD)}(\grdtroisun{$a$}{$b$}{$c$})&=M_{\wt(a),\wt(b),\wt(c)}+M_{\wt(a),\wt(c),\wt(b)}+M_{\wt(b),\wt(a),\wt(c)}\\
&+M_{\wt(b),\wt(c),\wt(a)}+M_{\wt(c),\wt(a),\wt(b)}+M_{\wt(c),\wt(b),\wt(a)}\\
&+M_{\wt(a+b),\wt(c)}+M_{\wt(a+c),\wt(b)}+M_{\wt(b+c),\wt(a)}\\
&+M_{\wt(c),\wt(a+b)}+M_{\wt(b),\wt(a+c)}+M_{\wt(a),\wt(b+c)}+M_{\wt(a+b+c)}.
\end{align*}
In the nondecorated case, this simplifies:
\begin{align*}
F_0(\grun)&=M_1,&F_0(\grtroisdeux)&=6M_{111}+4M_{11}+M_{12}+M_{21}+M_1,\\
F_0(\grdeux)&=2M_{11}+M_1,&F_0(\grtroisun)&=6M_{111}+6M_{11}+M_1.
\end{align*}
\end{example}

\section{Non-commutative versions}

\subsection{Non-commutative Hopf algebra of graphs}

\begin{defi}
\begin{enumerate}
\item An indexed graph is a graph $G$ such that $V(G)=[n]$, with $n\geq 0$. The set of indexed graphs is denoted by $\GR$.
\item Let $G=([n],E(G))$ be an indexed graph and let $I\subseteq [n]$. There exists a unique increasing bijection $f:I\longrightarrow [k]$,
where $k=\sharp I$. We denote by $G_{\mid I}$ the indexed graph defined by:
\[G_{\mid I}=([k],\{ \{f(x),f(y)\} \mid \{x,y\}\in E(G),\: x,y\in I\}).\] 
\item Let $G$ be an indexed graph and $\sim \triangleleft G$.
\begin{enumerate}
\item The graph $G|\sim$ is an indexed graph.
\item We order the elements of $V(G)/\sim$ by their minimal elements; using the unique increasing bijection from $V(G)/\sim$ to $[k]$,
$G/\sim$ becomes an indexed graph.
\end{enumerate}
\item Let $G=([k],E(G))$ and $H=([l],E(H))$ be indexed graphs. The indexed graph $GH$ is defined by:
\begin{align*}
V(GH)&=[k+l],\\
E(GH)&=E(G)\sqcup \{\{x+k,y+l\}\mid \{x,y\}\in E(H)\}.
\end{align*}\end{enumerate}\end{defi}

The Hopf algebra $(\hGR,m,\Delta)$ is, as its commutative version, introduced in \cite{Schmitt}:

\begin{theo} \begin{enumerate}
\item We denote by $\hGR$ the vector space generated by indexed graphs. We define a product $m$ and two coproducts $\Delta$ and $\delta$ 
on $\hGR$ in the following way: 
\begin{align*}
&\forall G,H\in \GR,& m(G\otimes H)&=GH,\\
&\forall G=([n],E(G))\in \GR,&\Delta(G)&=\sum_{I\subseteq [n]} G_{\mid I} \otimes G_{[n]\setminus I},\\
&\forall G\in \GR,& \delta(G)&=\sum_{\sim \triangleleft G} G/\sim \otimes G\mid \sim.
\end{align*}
Then $(\hGR,m,\Delta)$ is a graded cocommutative Hopf algebra, and $(\hGR,m,\delta)$ is a bialgebra.
\item Let $\varpi:\hGR\longrightarrow \hgr$ be the surjection sending an indexed graph to its isoclass. 
\begin{enumerate}
\item $\varpi:(\hGR,m,\Delta)\longrightarrow (\hgr,m,\Delta)$ is a surjective Hopf algebra morphism.
\item $\varpi:(\hGR,m,\delta)\longrightarrow (\hgr,m,\delta)$ is a surjective bialgebra morphism.
\item We put $\rho=(\id\otimes \varpi)\circ \delta:\hGR\longrightarrow \hGR\otimes \hgr$.
This defines a coaction of $(\hgr,m,\delta)$ on $\hGR$; moreover, $(\hGR,m,\Delta)$ is a Hopf algebra in the category of 
$(\hgr,m,\delta)$-comodules.
\end{enumerate}
\end{enumerate}\end{theo}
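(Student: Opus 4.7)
The overall plan is to mirror the arguments of Propositions \ref{prop2} and \ref{prop4} and Theorem \ref{theo7}, carefully tracking the canonical reindexing conventions that distinguish the non-commutative setting.

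For part 1, I would first check that $(\hGR,m,\Delta)$ is a Hopf algebra. Associativity of $m$ is immediate from the vertex-set formula $V((GH)K)=[k+l+p]=V(G(HK))$. The multiplicativity of $\Delta$ reduces to the observation that any $I\subseteq [k+l]$ splits uniquely as $I'\sqcup(k+I'')$ with $I'\subseteq[k]$, $I''\subseteq[l]$, and that the canonical increasing bijection used to reindex $(GH)_{\mid I}$ restricts to those used for $G_{\mid I'}$ and $H_{\mid I''}$, so that $(GH)_{\mid I}=G_{\mid I'}H_{\mid I''}$. Coassociativity follows from the relation $(G_{\mid I})_{\mid J}=G_{\mid J}$ (after canonical reindexing) for $J\subseteq I\subseteq V(G)$, and cocommutativity from the involution $I\leftrightarrow [n]\setminus I$. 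Grading by $|G|$ and connectedness in degree $0$ are obvious, so the antipode exists automatically. The bialgebra axioms for $(\hGR,m,\delta)$ are proved exactly as in Proposition \ref{prop4}; the additional ingredient here is that equivalence classes of $\sim\triangleleft G$ are ordered by their minima to index $G/\sim$, which is a consistent convention under iterated contraction.

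For part 2 (a) and (b), the map $\varpi$ is an algebra morphism since forgetting the indexing of $GH$ produces the disjoint union of the isoclasses of $G$ and $H$. The coproduct compatibilities $\varpi\circ \Delta=\Delta\circ \varpi$ and $\varpi\circ \delta=\delta\circ \varpi$ follow from the fact that, up to isomorphism, $G_{\mid I}$, $G/\sim$ and $G|\sim$ do not depend on any labelling of $V(G)$. The counit conditions and the antipode compatibility for $\Delta$ are equally direct.

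For part 2 (c), I would split the argument into two pieces. That $\rho$ is a coaction reduces to the counit identity $(Id\otimes \varepsilon')\circ \rho=Id$, which is the counit of $\delta$ on $\hGR$, and to the coassociativity
\[
(\rho\otimes Id)\circ \rho=(Id\otimes \delta)\circ \rho,
\]
which follows by expanding both sides using coassociativity of $\delta$ on $\hGR$ together with the intertwining established in (b). To prove that $(\hGR,m,\Delta)$ is a Hopf algebra in the category of $(\hgr,m,\delta)$-comodules, I would first verify the stronger cointeraction identity
\[
m^3_{2,4}\circ (\delta\otimes \delta)\circ \Delta=(\Delta\otimes Id)\circ \delta
\]
on $\hGR$ itself, by the computation of Theorem \ref{theo7}; here the only adaptation is to track that for $\sim\triangleleft G$, the splitting $V(G)/\sim=I\sqcup J$ pulls back under $\pi_\sim$ to a splitting $V(G)=I'\sqcup J'$, and that $\sim$ restricts to relations $\sim'\triangleleft G_{\mid I'}$, $\sim''\triangleleft G_{\mid J'}$. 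Applying $\varpi$ to the last tensor factor of both sides then yields the same identity for $\rho$, which together with $\rho(1)=1\otimes 1$, the multiplicativity of $\rho$, and the relation $(\varepsilon\otimes Id)\circ \rho=\varepsilon(\cdot)1$ exhausts the cointeraction axioms.

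The main obstacle is purely bookkeeping: at each step one must verify that the canonical increasing reindexings interact correctly with restriction, contraction, and concatenation, and in particular that the ordering-by-minima convention used for $G/\sim$ is compatible with iterated contraction. Once these verifications are in place, every computation is formally identical to its commutative counterpart from Section \ref{s1}.
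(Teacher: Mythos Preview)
Your overall strategy matches the paper's: reduce everything to the commutative arguments of Propositions~\ref{prop2}, \ref{prop4} and Theorem~\ref{theo7}, while tracking the reindexing conventions. Parts 1, 2(a), and 2(b) are fine.

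There is, however, a genuine error in your treatment of 2(c). You propose to first establish the cointeraction identity
\[
m^3_{2,4}\circ (\delta\otimes \delta)\circ \Delta=(\Delta\otimes Id)\circ \delta
\]
on $\hGR$ itself, and only afterwards apply $\varpi$ to the last tensor factor. But this identity is \emph{false} on $\hGR$; the paper states this explicitly in the remark immediately following the theorem, with the counterexample $G=\grdtroisdeux{$1$}{$3$}{$2$}$. The failure is precisely in the step you describe as ``only bookkeeping'': when you split $V(G)=I'\sqcup J'$ and form $(G_{\mid I'})|\sim'$ and $(G_{\mid J'})|\sim''$, their concatenation product in $\hGR$ shifts the indices of the second factor, so in general $(G_{\mid I'}|\sim')\cdot(G_{\mid J'}|\sim'')\neq G|\sim$ as \emph{indexed} graphs, even though they have the same isoclass. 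Thus the two sides of your displayed identity differ in the third tensor factor.

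The correct argument is to work with $\rho=(Id\otimes\varpi)\circ\delta$ from the outset: carry out the computation of Theorem~\ref{theo7} but with $\varpi$ already applied to the rightmost tensor factor, so that the mismatch above disappears. This is what the paper means by ``proved in the same way as theorem~\ref{theo7}''; the passage through $\varpi$ is essential, not cosmetic.
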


\begin{proof} 1. Similar to the proofs of Propositions \ref{prop2} and \ref{prop4}. \\

2. Points (a) and (b) are immediate; point (c) is proved in the same way as Theorem \ref{theo7}.\end{proof}

\begin{example}
\begin{align*}
\Delta(\grdun{$1$})&=\grdun{$1$}\otimes 1+1\otimes \grdun{$1$},\\
\Delta(\grddeux{$1$}{$2$})&=\grddeux{$1$}{$2$}\otimes 1+1\otimes \grddeux{$1$}{$2$}+\grdun{$1$}\otimes \grdun{$1$},\\
\Delta(\grdtroisun{$1$}{$3$}{$2$})&=\grdtroisun{$1$}{$3$}{$2$}\otimes 1+1\otimes \grdtroisun{$1$}{$3$}{$2$}
+3 \grdun{$1$}\otimes \grddeux{$1$}{$2$}+3\grddeux{$1$}{$2$}\otimes \grdun{$1$},\\
\Delta(\grdtroisdeux{$1$}{$3$}{$2$})&=\grdtroisdeux{$1$}{$3$}{$2$}\otimes 1+1\otimes \grdtroisdeux{$1$}{$3$}{$2$}
+2\grddeux{$1$}{$2$}\otimes \grdun{$1$}+\grdun{$1$}\grdun{$2$}\otimes \grdun{$1$}
+2\grdun{$1$}\otimes \grddeux{$1$}{$2$}+\grdun{$1$}\otimes \grdun{$1$}\grdun{$2$};\\ \\
\delta(\grdun{$1$})&=\grdun{$1$}\otimes \grdun{$1$},\\
\delta(\grddeux{$1$}{$2$})&=\grdun{$1$}\otimes\grddeux{$1$}{$2$}+\grddeux{$1$}{$2$}\otimes \grdun{$1$}\grdun{$2$},\\
\delta(\grdtroisun{$1$}{$3$}{$2$})&=\grdun{$1$}\otimes \grdtroisun{$1$}{$3$}{$2$}
+\grddeux{$1$}{$2$}\otimes(\grdun{$1$}\grddeux{$2$}{$3$}+\grdun{$2$}\grddeux{$1$}{$3$}+\grdun{$3$}\grddeux{$1$}{$2$})
+\grdtroisun{$1$}{$3$}{$2$}\otimes \grdun{$1$}\grdun{$2$}\grdun{$3$},\\
\delta(\grdtroisdeux{$1$}{$3$}{$2$})&=\grdun{$1$}\otimes \grdtroisdeux{$1$}{$3$}{$2$}
+\grddeux{$1$}{$2$}\otimes (\grdun{$2$}\grddeux{$1$}{$3$}+\grdun{$3$}\grddeux{$1$}{$2$})
+\grdtroisdeux{$1$}{$3$}{$2$}\otimes \grdun{$1$}\grdun{$2$}\grdun{$3$}.
\end{align*}\end{example}

\begin{remark}
 $(\hGR,m,\Delta)$ is not a bialgebra in the category of $(\hGR,m,\delta)$-comodules, as shown in the following example:
\begin{align*}
(\Delta\otimes \id)\circ \delta(\grdtroisdeux{$1$}{$3$}{$2$})&=\Delta(\grdun{$1$})\otimes \grdtroisdeux{$1$}{$3$}{$2$}
+(\grddeux{$1$}{$2$}\otimes 1+1\otimes \grddeux{$1$}{$2$})\otimes 
(\grdun{$2$}\grddeux{$1$}{$3$}+\grddeux{$1$}{$2$}\grdun{$3$})\\
&+\grdun{$1$}\otimes \grdun{$1$}\otimes (\textcolor{red}{\grdun{$2$}\grddeux{$1$}{$3$}}+\grddeux{$1$}{$2$}\grdun{$3$})
+\Delta(\grdtroisdeux{$1$}{$3$}{$2$})\otimes \grdun{$1$}\grdun{$2$}\grdun{$3$},\\
m_{1,3,24}\circ (\delta \otimes \delta)\circ \Delta(\grdtroisdeux{$1$}{$3$}{$2$})&=
\Delta(\grdun{$1$})\otimes \grdtroisdeux{$1$}{$3$}{$2$}
+(\grddeux{$1$}{$2$}\otimes 1+1\otimes \grddeux{$1$}{$2$})\otimes 
(\grdun{$2$}\grddeux{$1$}{$3$}+\grddeux{$1$}{$2$}\grdun{$3$})\\
&+\grdun{$1$}\otimes \grdun{$1$}\otimes (\textcolor{red}{\grdun{$1$}\grddeux{$2$}{$3$}}+\grddeux{$1$}{$2$}\grdun{$3$})
+\Delta(\grdtroisdeux{$1$}{$3$}{$2$})\otimes \grdun{$1$}\grdun{$2$}\grdun{$3$}.
\end{align*}\end{remark}

\subsection{Reminders on $\WQSym$}

Let us recall the construction of $\WQSym$ \cite{NovelliThibon2}. 
\begin{defi}
\begin{enumerate}
\item Let $w$ be a word with letters in $\N_{>0}$. We shall say that $w$ is \emph{packed} if:
\begin{align*}
&\forall j\in \N_{>0},&\mbox{$j$ appears in $w$}\Longrightarrow \mbox{$1,\ldots, j$ appear in $w$}.
\end{align*}
\item Let $w=x_1\ldots x_k$ a wordwith letters in $\N_{>0}$. 
There exists a unique increasing bijection $f$ from $\{x_1,\ldots,x_k\}$ to $[l]$,
with $l\geq 0$; the packed word $\pack(w)$ is $f(x_1)\ldots f(x_k)$.
\item $w=x_1\ldots x_k$ a word in $\N_{>0}$ and $I\subseteq \N_{>0}$. The word $w_{\mid I}$ is the word obtained by
taking the letters of $w$ which are in $I$.
\end{enumerate} \end{defi}

The Hopf algebra $\WQSym$ has the set of packed words for basis. If $w=w_1\ldots w_k$ and $w'=w'_1\ldots w'_l$ are packed words, then:
\[w\squplus w'=\sum_{\substack{w''=w''_1\ldots w''_{k+l},\\ \pack(w''_1\ldots w''_k)=w,\\ \pack(w''_{k+1}\ldots w''_{k+l}=w'}} w''.\]
For any packed word $w$:
\[\Delta(w)=\sum_{i=0}^{\max(w)} w_{\mid [i]} \otimes \pack(w_{[\max(w)]\setminus [i]}).\]
Then $(\WQSym,\squplus,\Delta)$ is a Hopf algebra. Moreover, $\WQSym$ has also a second coproduct $\delta$ defined on 
any packed word $w=w_1\ldots w_k$ by:
\[\delta(w)=\sum_{f,g} f(w_1)\ldots f(w_k)\otimes g(w_1)\ldots g(w_k),\]
where the sum runs over all pairs of maps $(f,g)$, where 
$f:[\max(w)]\longrightarrow[\max(f)]$ is an  increasing surjective map and $g:[\max(w)]\longrightarrow[\max(g)]$
is an increasing map such that for any $i\in [\max(f)]$, $g_{\mid f^{-1}(i)}$ is increasing.
However, $(\WQSym,\squplus,\Delta)$ is not a bialgebra in the category of right
$(\WQSym,\squplus,\delta)$-comodules, as shown in the following example. 
\begin{align*}
(\Delta \otimes \id)\circ \delta((132))&=\Delta((132))\otimes (1)\squplus (1) \squplus (1)\\
&+((121)\otimes1+1\otimes (121))\otimes ((112)+(121)+(132)+(123)+(213))\\
&+(11)\otimes (1)\otimes ( \textcolor{red}{(112)}+(121)+(132)+(123)+\textcolor{red}{(213)})\\
&+\Delta((122))\otimes (1)\squplus (11)+\Delta((111))\otimes (132),\\
\squplus_{1,3,24}\circ (\delta \otimes \delta)\circ \Delta((132))&=\Delta((132))\otimes (1)\squplus (1) \squplus (1)\\
&+((121)\otimes1+1\otimes (121))\otimes ((112)+(121)+(132)+(123)+(213))\\
&+(11)\otimes (1)\otimes ( \textcolor{red}{(121)}+(121)+(132)+(123)+\textcolor{red}{(231)})\\
&+\Delta((122))\otimes (1)\squplus (11)+\Delta((111))\otimes (132).
\end{align*}

This Hopf algebra admits a polynomial representation: 
we fix a infinite totally ordered alphabet $\X$; the set of words in $\X$ is denoted by $\X^*$. 
For any packed word $w$, we consider the noncommutative formal series:
\[\Rep_\X(w)=\sum_{w'\in \X^*,\: \pack(w')=w} w'\in \Q\langle\langle \X\rangle\rangle.\]
Then $\Rep_\X$ is an algebra morphism from $\WQSym$ to $\Q\langle\langle \X\rangle\rangle$.
For example:
\begin{align*}
\Rep_X(111)&=\sum_{x\in X}x^3,\\
\Rep_X(112)&=\sum_{x<y \:\mbox{\scriptsize in }X}x^2y,&\Rep_X(221)&=\sum_{x<y \:\mbox{\scriptsize in }X}y^2x,\\
\Rep_X(121)&=\sum_{x<y \:\mbox{\scriptsize in }X}xyx,&\Rep_X(212)&=\sum_{x<y \:\mbox{\scriptsize in }X}yxy,\\
\Rep_X(211)&=\sum_{x<y \:\mbox{\scriptsize in }X}yx^2,&\Rep_X(122)&=\sum_{x<y \:\mbox{\scriptsize in }X}xy^2,\\
\Rep_X(123)&=\sum_{x<y<z \:\mbox{\scriptsize in }X}xyz,&
\Rep_X(312)&=\sum_{x<y<z \:\mbox{\scriptsize in }X}zxy.
\end{align*}
If $\X$ is infinite, then $\Rep_\X$ is injective.
If $\X$ and $\Y$ are two totally ordered alphabets, 
we shall consider $\Q\langle \langle \X \rangle\rangle\otimes \Q\langle \langle \Y\rangle\rangle$ as a quotient of 
$\Q\langle \langle \X\sqcup \Y \rangle\rangle$, through the continuous map:
\[\left\{\begin{array}{rcl}
\Q\langle \langle \X\sqcup \Y \rangle\rangle&\longrightarrow&\Q\langle \langle \X \rangle\rangle\otimes \Q\langle \langle \Y\rangle\rangle\\
x\in X&\longrightarrow&x\otimes 1,\\
y\in Y&\longrightarrow &1\otimes y.
\end{array}\right.\]
We obtain:
\[\Rep_{\X\sqcup \Y}=(\Rep_\X\otimes \Rep_\Y)\circ \Delta.\]
We shall identify
$\Q\langle\langle \X\times \Y\rangle\rangle$ with a subalgebra of $\Q\langle\langle \X\rangle\rangle \otimes 
\Q\langle\langle \Y\rangle\rangle$, through the continuous map:
\[\left\{\begin{array}{rcl}
\Q\langle \langle \X\times \Y \rangle\rangle&\longrightarrow&\Q\langle \langle \X \rangle\rangle\otimes \Q\langle \langle \Y\rangle\rangle\\
(x,y)\in \X\times \Y&\longrightarrow&x\otimes y.
\end{array}\right.\]
We obtain:
\[\Rep_{\X\times \Y}=(\Rep_\X\otimes \Rep_\Y)\circ \delta.\]

\subsection{Non-commutative chromatic symmetric functions}

\begin{defi}
A set partition is a partition of a set $[n]$, with $n\geq 0$. The set of set partitions is denoted by $\SP$.
\end{defi}

\begin{theo} \label{theo35} \begin{enumerate}
\item For any packed word $w$ of length $n$ and of maximal $k$, we denote by $p(w)$ the set partition 
$\{w^{-1}(1),\ldots,w^{-1}(k)\}$.
For any set partition $\varpi\in \SP$, we put:
\[W_\varpi=\sum_{w\in \PW,\: p(w)=\varpi} w.\]
These elements are a basis of a cocommutative Hopf subalgebra of $\WQSym$, denoted by $\WSym$.
\item The following map is a Hopf algebra morphism from $(\hGR,m,\Delta)$ to $(\WQSym,\squplus,\Delta)$:
\[\bfF_{chr}:\left\{\begin{array}{rcl}
\hGR&\longrightarrow&\WQSym\\
G\in \GR&\longrightarrow&\displaystyle \sum_{f\in \PVC(G)} f(1)\ldots f(|G|).
\end{array}\right.\]
Its  image is $\WSym$. Moreover:
\begin{align*}
&\forall G \in \GR,&\bfF_{chr}(G)&=\sum_{\varpi\in \IP(G)} W_\varpi.
\end{align*}
\end{enumerate} \end{theo}

\begin{proof} 2. For any totally ordered alphabet $\X$, by definition of $\Rep_X$,
for any $G\in \GR$, with $n$ vertices:
\[\Rep_\X\circ \bfF_{chr}(G)=\sum_{f\in \VC(G,\X)}f(1)\ldots f(n).\]
Let us choose two infinite totally ordered alphabets $\X$ and $\Y$.
Let  $G$, $H\in \GR$, of respective degrees $m$ and $n$:
\begin{align*}
\Rep_\X(\bfF_{chr}(GH))&=\sum_{f\in \VC(GH,\X)}f(1)\ldots f(m+n)\\
&=\sum_{\substack{f'\in \VC(G,\X),\\ f''\in \VC(H,\X)}} f'(1)\ldots f'(m) f''(1)\ldots f''(n)\\
&=\Rep_\X\circ \bfF_{chr}(G)\Rep_\X\circ \bfF_{chr}(H)\\
&=\Rep_\X(\bfF_{chr}(G)\squplus \bfF_{chr}(H)).
\end{align*}
As $\Rep_\X$ is injective, $\bfF_{chr}(GH)=\bfF_{chr}(G)\squplus\bfF_{chr}(H)$, so $\bfF_{chr}$ is an algebra morphism.\\

Let $G\in \GR$, of degree $n$. 
\begin{align*}
(\Rep_\X\otimes \Rep_\Y)\circ \Delta \circ \bfF_{chr}(G)&=\Rep_{\X\sqcup \Y}\circ \bfF_{chr}(G)\\
&=\sum_{f\in \VC(FG,\X\sqcup \Y)}f(1)\ldots f(n)\\
&=\sum_{V(G)=I\sqcup J} \sum_{\substack{f'\in \VC(F_{\mid I},\X),\\f''\in \VC(F_{\mid J},\Y)}}
f'(1)\ldots f'(|I|) f''(1)\ldots f''(|J|)\\
&=\sum_{V(G)=I\sqcup J} \Rep_\X \circ \bfF_{chr}(G_{\mid I})\otimes \Rep_\Y \circ \bfF_{chr}(G_{\mid J})\\
&=(\Rep_\X\otimes \Rep_\Y)\circ (\bfF_{chr}\otimes \bfF_{chr})\circ \Delta(G).
\end{align*}
As $\Rep_\X$ and $\Rep_\Y$ are injective, $\Delta \circ \bfF_{chr}=(\bfF_{chr}\otimes \bfF_{chr})\circ \Delta$.\\

1. So $\WSym$ is a Hopf subalgebra of $\WQSym$, isomorphic to a quotient of $\hGR$, so is cocommutative. \end{proof}

\begin{remark}
\begin{enumerate}
\item The Hopf algebra $\WSym$, known as the Hopf algebra of word symmetric functions, is described and used 
in \cite{Bergeron,Bultel,Hivert}. Here is a description of its product and coproduct, with immediate notations: 
\begin{itemize}
\item For any set partitions $\varpi$, $\varpi'$ of respective degree $m$ and $n$:
\[W_\varpi W_{\varpi'}=\sum_{\substack{\varpi''\in \SP,\: \deg(\varpi'')=k+l,\\ 
\pack(\varpi''_{\mid [k]})=\varpi,\\\pack(\varpi''_{\mid [k+l]\setminus [k]})=\varpi'}}W_{\varpi''}.\]
\item For any set partition $\varpi=\{P_1,\ldots,P_k\}$:
\[\Delta(P_w)=\sum_{I\subseteq [k]} W_{\pack(\{I_p\mid p\in I\})}\otimes W_{\pack(\{I_p\mid p\notin I\})}.\]
\end{itemize}
For example:
\begin{align*}
W_{\{\{1,2\}\}} W_{\{\{1\}\}}&=W_{\{\{1,2\},\{3\}\}}+W_{\{\{1,2,3\}\}},\\
W_{\{\{1\},\{2\}\}}W_{\{\{1\}\}}&=W_{\{\{1\},\{2\},\{3\}\}}+W_{\{\{1,3\},\{2\}\}}+W_{\{\{1\},\{2,3\}\}},\\
\Delta(W_{\{\{1,3\},\{2\},\{4\}\}})&=W_{\{\{1,3\},\{2\},\{4\}\}}\otimes 1+W_{\{\{1,3\},\{2\}\}}\otimes W_{\{\{1\}\}}+W_{\{\{1,2\},\{3\}\}}\otimes W_{\{\{1\}\}}\\
&+W_{\{\{1\},\{2\}\}}\otimes W_{\{\{1,2\}\}}+W_{\{\{1,2\}\}}\otimes W_{\{\{1\},\{2\}\}}+W_{\{\{1\}\}}\otimes W_{\{\{1,2\},\{3\}\}}\\
&+W_{\{\{1\}\}}\otimes W_{\{\{1,3\},\{2\}\}}+1\otimes W_{\{\{1,3\},\{2\},\{4\}\}}.
\end{align*}

\item The map $\bfF_{chr}$ is not a bialgebra morphism from $(\hGR,m,\delta)$ to $(\WQSym,\squplus,\delta)$. For example:
\begin{align*}
(\bfF_{chr}\otimes \bfF_{chr})\circ \delta(\grddeux{$1$}{$2$})
&=\textcolor{red}{(1)}\otimes ((12)+(21))+((12)+(21))\otimes ((11)+(12)+(21)),\\
\delta\circ \bfF_{chr}(\grddeux{$1$}{$2$})
&=\textcolor{red}{(11)}\otimes ((12)+(21))+((12)+(21))\otimes ((11)+(12)+(21)).
\end{align*}
\end{enumerate}
\end{remark}

\subsection{Non-commutative version of $F_0$}

We shall use the notations of Proposition \ref{prop30}. If $G$ be an indexed graph and $f\in \PC(G)$, then
$G/\sim_f$ is an indexed graph; we denote its cardinality by $k$. We put:
\[w_f=\overline{f}(1)\ldots \overline{f}(k).\]

\begin{prop} \label{prop36}
Let us consider the following map:
\[\bfF_0:\left\{\begin{array}{rcl}
\hGR&\longrightarrow&\WSym\\
G&\longrightarrow&\displaystyle \sum_{f\in \PC(G)} w_f.
\end{array}\right.\]
This is a Hopf algebra morphism. Moreover, in $E_{\hGR\rightarrow \WSym}$:
\[\bfF_{chr}=\bfF_0\leftarrow \lambda_{chr}.\]
\end{prop}

\begin{proof} This is proved in the same way as Proposition \ref{prop31}. \end{proof}

\begin{example}
\begin{align*}
\bfF_0(\grdun{$1$})&=(1),\\
\bfF_0(\grddeux{$1$}{$2$})&=(12)+(21)+(1),\\
\bfF_0(\grdtroisun{$1$}{$3$}{$2$})&=(123)+(132)+(213)+(231)+(312)+(321)+3(12)+3(21)+(1),\\
\bfF_0(\grdtroisdeux{$1$}{$3$}{$2$})&=(123)+(132)+(213)+(231)+(312)+(321)+(122)+(211)+2(12)+2(21)+(1).
\end{align*}\end{example}

\subsection{From non-commutative to commutative}

As $\Q[[\X]]$ is a quotient of $\Q\langle\langle X\rangle\rangle$, this polynomial representations $\Rep$ of $\WQSym$ 
and $\rep$ of $\QSym$ induce a surjective Hopf algebra morphism:
\[\pi:\left\{\begin{array}{rcl}
\WQSym&\longrightarrow&\QSym\\
w&\longrightarrow&\displaystyle M_{|w^{-1}(1)|,\ldots, |w^{-1}(\max(w))|}.
\end{array}\right.\]

\begin{prop}
$\pi\circ \bfF_0=F_0\circ \varpi$ and $\pi\circ \bfF_{chr}=F_{chr}\circ \varpi$.
\end{prop}

\begin{proof} Immediate. \end{proof}

We obtain commutative diagrams of Hopf algebra morphisms:
\begin{align*}
&\xymatrix{\WQSym\ar@{->>}^{\pi}[r]&\QSym\ar@{->>}^H[r]&\Q[X]\\
\hGR\ar^{\bfF_{chr}}[u]\ar@{->>}[r]_\varpi&\hgr\ar^{F_{chr}}[u]\ar@{->>}_{P_{chr}}[ru]&}
&\xymatrix{\WQSym\ar@{->>}^{\pi}[r]&\QSym\ar@{->>}^H[r]&\Q[X]\\
\hGR\ar^{\bfF_0}[u]\ar@{->>}[r]_\varpi&\hgr\ar^{F_0}[u]\ar@{->>}_{\phi_0}[ru]&}
\end{align*}

\bibliographystyle{amsplain}
\bibliography{biblio}

\end{document}